\def\setliststart#1{\setcounter{\@listctr}{#1}%
  \addtocounter{\@listctr}{-1}}
\newtheorem{theorem}{Theorem}[section]
\newtheorem{lemma}[theorem]{Lemma}
\newtheorem{proposition}[theorem]{Proposition}
\newtheorem{corollary}[theorem]{Corollary}
\newtheorem{remarks}[theorem]{Remark}
\newtheoremstyle{break}
  {\topsep}{\topsep}%
  {\itshape}{}%
  {\bfseries}{}%
  {\newline}{}%
\theoremstyle{break}
\newtheorem{example}[theorem]{Example}
\newtheorem{definition}[theorem]{Definition}
\numberwithin{equation}{section}
\newcommand{\T}{\mathbb{T}}
\newcommand{\R}{\mathbb{R}}
\newcommand{\N}{\mathbb{N}}
\newcommand{\PP}{\mathcal{P}}
\DeclareMathOperator*{\argmin}{argmin} 
\DeclareMathOperator*{\supp}{spt}
\DeclareMathOperator*{\esssup}{ess\ sup}
\DeclareMathOperator*{\ddiv}{div}
\DeclareMathOperator*{\OOO}{\overline{\Omega}}
\DeclareMathOperator*{\OO}{\Omega}
\DeclareMathOperator*{\eps}{\varepsilon}
\def\moverlay{\mathpalette\mov@rlay}
\def\mov@rlay#1#2{\leavevmode\vtop{%
   \baselineskip\z@skip \lineskiplimit-\maxdimen
   \ialign{\hfil$\m@th#1##$\hfil\cr#2\crcr}}}
\newcommand{\charfusion}[3][\mathord]{
    #1{\ifx#1\mathop\vphantom{#2}\fi
        \mathpalette\mov@rlay{#2\cr#3}
      }
    \ifx#1\mathop\expandafter\displaylimits\fi}
\title[Weak KAM approach to first-order Mean Field Games with state constraints]{Weak KAM approach to first-order Mean Field Games with state constraints}
\author{Piermarco Cannarsa \and Wei Cheng \and Cristian Mendico \and Kaizhi Wang}
\begin{document}
\usetagform{blue}
\maketitle
\begin{abstract}
We study the asymptotic behavior of solutions to the constrained MFG system as the time horizon $T$ goes to infinity. For this purpose, we analyze first Hamilton-Jacobi equations with state constraints from the viewpoint of weak KAM theory, constructing a Mather measure for the associated variational problem. Using these results, we show that a solution to the constrained ergodic mean field games system exists and the ergodic constant is unique. Finally, we prove that any solution of the first-order constrained MFG problem on $[0,T]$ converges to the solution of the ergodic system as $T \to +\infty$.	
\\ \\
		\textit{Keywords:} Weak KAM theory; Mean Field Games; State constraints; Semiconcave functions; Long-time behavior of solutions.   \\\\
		\textit{2010\ Mathematics\ Subject Classification: 35D40; 35F21; 49J45; 49J53; 49L25.}
\end{abstract}

\tableofcontents

\section{Introduction}
The theory of Mean Field Games (MFG) was introduced independently by Lasry and Lions \cite{bib:LL1,bib:LL2,bib:LL3} and Huang, Malham\'e and Caines \cite{bib:HCM1,bib:HCM2} to study Nash equilibria for games with a very large number of players. Without entering technical details, let us recall that such an approach aims to describe the optimal value $u$ and distribution $m$ of players at a Nash equilibrium by a system of partial differential equations. Stochastic games are associated with a second order PDE system while deterministic games lead to the analysis of the first order system 
\begin{equation}\label{eq:omegaMFG}
\begin{cases}
\ -\partial _{t} u^{T} + H(x, Du^{T})=F(x, m^{T}(t)) & \text{in} \quad (0,T)\times\OO, 
\\ \  \partial _{t}m^{T}-\text{div}\Big(m^{T}D_{p}H(x, Du^{T}(t,x)\Big)=0  & \text{in} \quad (0,T)\times\OO,  
\\ \ m^{T}(0)=m_{0}, \quad u^{T}(T,x)=u^{f}(x), & x\in\OO.
\end{cases}
\end{equation}
where $\OO$ is an open domain in the Euclidean space or on a manifold. 
Following the above seminal works, this subject grew very fast producing an enormous literature. Here, for space reasons, we only refer to \cite{bib:NC, bib:DEV, bib:BFY, bib:CD1} and the references therein. However, most of the papers on this subject assumed the configuration space $\OO$ to be the torus $\T^{d}$ or the whole Euclidean space $\R^{d}$.

In this paper we investigate the long time behavior of the solution to \cref{eq:omegaMFG} where $\OO$ is a bounded domain of $\R^{d}$ and the state of the system is constrained in $\OOO$.

The constrained MFG system with finite horizon $T$ was analyzed in \cite{bib:CC, bib:CCC1, bib:CCC2}. In particular, Cannarsa and Capuani in \cite{bib:CC} introduced  the notion of constrained equilibria and mild solutions $(u^T,m^T)$ of the constrained MFG system \cref{eq:omegaMFG} with finite horizon on $\OOO$ and proved an existence and uniqueness result for such a system. In \cite{bib:CCC1, bib:CCC2}, Cannarsa, Capuani and Cardaliaguet studied the regularity of mild solutions of the constrained MFG system and used such results to give a precise interpretation of \cref{eq:omegaMFG}.

At this point, it is natural to raise the question of the asymptotic behavior of solutions as $T \to +\infty$. In the absence of state constraints, results describing the asymptotic behavior of solutions of the MFG system were obtained in \cite{bib:CLLP2, bib:CLLP1}, for second order systems on $\T^{d}$, and in \cite{bib:CCMW, bib:CAR}, for first order systems on $\T^{d}$ and $\R^{d}$, respectively. Recently, Cardaliaguet and Porretta studied the long time behavior of solutions for the so-called Master equation associated with a second order MFG system, see \cite{bib:CP2}. As is well known, the introduction of state constraints creates serious obstructions to most techniques which can be used in the unconstrained case. New methods and ideas become necessary. 

 In order to understand the specific features of constrained problems, it is useful to recall the main available results for constrained Hamilton-Jacobi equations. The dynamic programming approach to constrained optimal control problems has a long history going back to Soner \cite{bib:Soner}, who introduced the notion of constrained viscosity solutions as subsolutions in the interior of the domain and supersolutions up to the boundary. Several results followed the above seminal paper, for which we refer to \cite{bib:BD, bib:Cap-lio} and the references therein.

As for the asymptotic behavior of constrained viscosity solutions of
\begin{equation*}
\partial_{t} u(t,x) + H(x, Du(t,x))=0, \quad (t,x) \in [0,+\infty) \times \OOO
\end{equation*}
 we recall the paper \cite{bib:Mit} by Mitake, where the solution $u(t,x)$ is shown to converge as $t \to +\infty$ to a viscosity solution, $\bar{u}$,  of the ergodic Hamilton-Jacobi equation
 \begin{equation}\label{eq:1-1}
 H(x, Du(x))=c, \quad x \in \OOO	
 \end{equation}
for a unique constant $c \in \R$. 
In the absence of state constraints, it is well known that the constant $c$ can be characterized by using Mather measures, that is, invariant measures with respect to the Lagrangian flow which minimize the Lagrangian action, see for instance \cite{bib:FA}.
On the contrary, such an analysis is missing for constrained optimal control problems and the results in \cite{bib:Mit} are obtained by pure PDE methods without constructing a Mather measure.

On the other hand, as proved in \cite{bib:CCMW, bib:CAR}, the role of Mather measures is crucial in the analysis of the asymptotic behavior of solutions to the MFG system on $\T^{d}$ or $\R^{d}$. For instance, on $\T^{d}$ the limit behavior of $u^{T}$ is described by a solution $(\bar{c}, \bar{u}, \bar{m})$ of the ergodic MFG system
\begin{align*}
\begin{cases}
H(x, Du(x))=c+F(x,m), & \text{in}\ \T^{d}
\\
\ddiv\Big(mD_{p}H(x, Du(x)) \Big)=0, & \text{in}\ \T^{d}\\
\int_{\T^{d}}m(dx)=1 
\end{cases}
\end{align*}
where $\bar{m}$ is given by a Mather measure. Then, the fact that $\bar{u}$ is differentiable on the support of the Mather measure, allows to give a precise interpretation of the continuity equation in the above system.

Motivated by the above considerations, in this paper, we study the ergodic Hamil\-ton-Jacobi equation \cref{eq:1-1} from the point of view of weak KAM theory, aiming at constructing a Mather measure. For this purpose, we need to recover a fractional semiconcavity result for the value function of a constrained optimal control problem, which is inspired by a similar property derived in \cite{bib:CCC2}. Indeed, such a regularity is needed to prove the differentiability of a constrained viscosity solution of \cref{eq:1-1} along calibrated curves and, eventually, construct the Mather set.

With the above analysis at our disposal, we address the existence and uniqueness of solutions to the ergodic constrained MFG system
\begin{align}\label{eq:eMFG}
\begin{cases}
H(x, Du(x))=c+F(x,m), & \text{in}\ \OOO,
\\
\ddiv\Big(mD_{p}H(x, Du(x)) \Big)=0, & \text{in}\ \OOO,\\
\int_{\OOO}m(dx)=1. 
\end{cases}
\end{align}
As for existence, we construct a triple $(\bar{c}, \bar u, \bar m) \in \mathbb{R} \times C(\OOO) \times \mathcal{P}(\OOO)$ such that $\bar u$ is a constrained viscosity solution of the first equation in \cref{eq:eMFG} for $c=\bar{c}$, $D\bar u$ exists for $\bar m$-a.e. $x \in \OOO$, and $\bar m$ satisfies the second equation of system \cref{eq:eMFG} in the sense of distributions (for the precise definition see \Cref{def:ers}).  Moreover, under an extra monotonicity assumption for $F$, we show that $\bar{c}$ is the unique constant for which the system \cref{eq:eMFG} has a solution and $F(\cdot, \bar{m})$ is unique.

%for a suitable vector field $V$ which is given by $D_pH(x,D\bar u(x))$ for $x\in\OO\cap\supp(\bar m)$. 
%A triple $(\bar\lambda, \bar u, \bar m) \in \mathbb{R} \times C(\OOO) \times \mathcal{P}(\OOO)$ is called a solution of the ergodic constrained  MFG system \cref{eq:MFG1} if $\bar u$ is a constrained viscosity solution of the first equation in \cref{eq:MFG1} for $\lambda=\bar\lambda$, $D\bar u$ exists for $\bar m$-a.e. $x \in \OOO$, and $\bar m$ satisfies the second equation of system \cref{eq:MFG1} in the sense of distributions.
% i.e.,
		%$$ \int_{\OOO}{\big\langle Df(x), V(x) \big\rangle\ d\bar m(x)}=0, \quad  \forall f \in C_c^{\infty}(\OOO).
		%$$
%Here, vector field $V$ is related to $\bar u$ in the following way: if $x\in\OO\cap\supp(\bar m)$, then $D\bar u(x)$ exists and
%\[
%V(x)=D_pH(x,D\bar u(x));
%\]
%if $x\in\partial\OO\cap\supp(\bar m)$, then
%\[
%V(x)=D_pH(x,D^{\tau}\bar{u}(x)+\lambda_{+}(x)\nu(x)).
%\]
	%In the last section of the present paper, we describe the relationship between solutions of \cref{eq:MFG1} and the long-time behavior of mild solutions of \cref{eq:lab1}. The problem of the long time behavior for the MFG system with finite horizon has attracted a lot of attention in the last few years. For second order MFG system on $\T^{d}$ 

Then, using energy estimates for the MFG system, we prove our main result concerning the convergence of $u^{T} / T$: there exists a constant $C \geq $ such that 
\begin{equation*}
\sup_{t \in [0,T]} \Big\|\frac{u^{T}(t, \cdot)}{T} + \bar{c}\left(1-\frac{t}{T}\right) \Big\|_{\infty, \OOO} \leq \frac{C}{T^{\frac{1}{d+2}}}. 	
\end{equation*}

%However, as for the convergence result in \cite{bib:CAR, bib:CCMW}, we still have little information about the convergence of the measure component of the solution $m^{T}$ for the MFG system due to the lack of uniqueness of the ergodic measure $\bar{m}$. 

Even for the distribution of players $m^{T}$ we obtain an asymptotic estimate of the form:
\begin{equation*}
\frac{1}{T}\int_{0}^{T}{\big\| F(\cdot, m^{\eta}_s)- F(\cdot, \bar m) \big\|_{\infty, \OOO} ds} \leq \frac{C}{T^{\frac{1}{d+2}}}
\end{equation*}
for some constant $C \geq 0$. 

 We conclude this introduction recalling that asymptotic results for second-order MFG systems on $\T^{d}$ have been applied in \cite{bib:CLLP2, bib:CLLP1} to recover a Turnpike property with an exponential rate of convergence. A similar property, possibly at a lower rate, may then be expected for first-order MFG systems as well. We believe that the results of this paper could be  used to the purpose.

The rest of this paper is organized as follows.
In \Cref{sec:preliminaries}, we introduce the notation and some preliminaries. In \Cref{sec:HJstateconstraints}, we provide some weak KAM type results for Hamilton-Jacobi equations with state constraints. \Cref{sec:EMFG} is devoted to the existence of solutions of \cref{eq:MFG1}. We show the convergence result of \cref{eq:lab1} in \Cref{sec:convergence}.

\section{Preliminaries}
\label{sec:preliminaries}
\subsection{Notation}
We write below a list of symbols used throughout this paper.
\begin{itemize}
	\item Denote by $\mathbb{N}$ the set of positive integers, by $\mathbb{R}^d$ the $d$-dimensional real Euclidean space,  by $\langle\cdot,\cdot\rangle$ the Euclidean scalar product, by $|\cdot|$ the usual norm in $\mathbb{R}^d$, and by $B_{R}$ the open ball with center $0$ and radius $R$.
\item Let $\OO\subset \mathbb{R}^d$ be a bounded open set with $C^2$ boundary. $\OOO$ stands for its closure, $\partial \OO$ for its boundary and $\OO^c=\mathbb{R}^d\setminus \OO$ for the complement. For $x\in \partial \OO$, denote by $\nu(x)$ the outward unit normal vector to $\partial \OO$ at $x$.
\item The distance function from $\OOO$ is the function $d_{\OO}:\mathbb{R}^d\to[0,+\infty)$ defined by
$
d_{\OO}(x):=\inf_{y\in\OOO}|x-y|.
$
Define the oriented boundary distance from $\partial \OOO$ by $b_{\OOO}(x):=d_{\OO}(x)-d_{\OO^c}(x)$.
Since the boundary of $\OO$ is of class $C^2$, then $b_{\OO}(\cdot)$ is of class $C^2$ in a neighborhood of $\partial \OO$.

\item Denote by $\pi_1:\OOO\times \mathbb{R}^d\to\OOO$ the canonical projection.
	\item Let $\Lambda$ be a real $n\times n$ matrix. Define the norm of $\Lambda$ by
\[
\|\Lambda\|=\sup_{|x|=1,\ x\in\mathbb{R}^d}|\Lambda x|.
\]

\item Let $f$ be a real-valued function on $\mathbb{R}^d$. The set
\[
D^+ f(x)=\left\{p\in\mathbb{R}^d: \limsup_{y\to x}\frac{f(y)-f(x) - \langle p, y-x \rangle}{|y-x|}\leq 0\right\},
\]
is called the superdifferential of $f$ at $x$.

\item Let $A$ be a Lebesgue-measurable subset of $\mathbb{R}^{d}$. Let $1\leq p\leq \infty$.
Denote by $L^p(A)$ the space of Lebesgue-measurable functions $f$ with $\|f\|_{p,A}<\infty$, where
\begin{align*}
& \|f\|_{\infty, A}:=\esssup_{x \in A} |f(x)|,
\\& \|f\|_{p,A}:=\left(\int_{A}|f|^{p}\ dx\right)^{\frac{1}{p}}, \quad 1\leq p<\infty.
\end{align*}
Denote $\|f\|_{\infty,\mathbb{R}^d}$ by $\|f\|_{\infty}$ and $\|f\|_{p,\mathbb{R}^d}$ by $\|f\|_{p}$, for brevity.

\item $C_b(\mathbb{R}^d)$ stands for the function space of bounded uniformly  continuous functions on $\mathbb{R}^d$. $C^{2}_{b}(\mathbb{R}^{d})$ stands for the space of bounded functions on $\mathbb{R}^d$ with bounded uniformly continuous first and second derivatives.
$C^k(\mathbb{R}^{d})$ ($k\in\mathbb{N}$) stands for the function space of $k$-times continuously differentiable functions on $\mathbb{R}^d$, and $C^\infty(\mathbb{R}^{d}):=\cap_{k=0}^\infty C^k(\mathbb{R}^{d})$.
 $C_c^\infty(\OOO)$ stands for the space of functions $f\in C^\infty(\OOO)$ with $\supp(f)\subset \OO$. Let $a<b\in\mathbb{R}$.
  $AC([a,b];\mathbb{R}^d)$ denotes the space of absolutely continuous maps $[a,b]\to \mathbb{R}^d$.

  \item For $f \in C^{1}(\mathbb{R}^{d})$, the gradient of $f$ is denoted by $Df=(D_{x_{1}}f, ..., D_{x_{n}}f)$, where $D_{x_{i}}f=\frac{\partial f}{\partial x_{i}}$, $i=1,2,\cdots,d$.
Let $k$ be a nonnegative integer and let $\alpha=(\alpha_1,\cdots,\alpha_d)$ be a multiindex of order $k$, i.e., $k=|\alpha|=\alpha_1+\cdots +\alpha_d$ , where each component $\alpha_i$ is a nonnegative integer.   For $f \in C^{k}(\mathbb{R}^{d})$,
define $D^{\alpha}f:= D_{x_{1}}^{\alpha_{1}} \cdot\cdot\cdot D^{\alpha_{d}}_{x_{d}}f$.
\item Denote by $\mathcal{B}(\OOO)$ the Borel $\sigma$-algebra on $\OOO$, by $\mathcal{P}(\OOO)$ the set of Borel probability measures on $\OOO$, by $\mathcal{P}(\OOO\times\mathbb{R}^d)$ the set of Borel probability measures on $\OOO\times\mathbb{R}^d$. $\mathcal{P}(\OOO)$ and $\mathcal{P}(\OOO\times\mathbb{R}^d)$ are endowed with the weak-$\ast$ topology. One can define a metric on $\mathcal{P}(\OOO)$ by \cref{eq:2-100} below, which induces the weak-$\ast$ topology.
\end{itemize}

\subsection{Measure theory and MFG with state constraints}
Denote by $\mathcal{B}(\mathbb{R}^d)$ the  Borel $\sigma$-algebra on $\mathbb{R}^d$ and by $\mathcal{P}(\mathbb{R}^d)$ the space of Borel probability measures on $\mathbb{R}^d$.
The support of a measure $\mu \in \mathcal{P}(\mathbb{R}^n)$, denoted by $\supp(\mu)$, is the closed set defined by
\begin{equation*}
\supp (\mu) := \Big \{x \in \mathbb{R}^d: \mu(V_x)>0\ \text{for each open neighborhood $V_x$ of $x$}\Big\}.
\end{equation*}
We say that a sequence $\{\mu_k\}_{k\in\mathbb{N}}\subset \mathcal{P}(\mathbb{R}^d)$ is weakly-$*$ convergent to $\mu \in \mathcal{P}(\mathbb{R}^d)$, denoted by
$\mu_k \stackrel{w^*}{\longrightarrow}\mu$,
  if
\begin{equation*}
\lim_{n\rightarrow \infty} \int_{\mathbb{R}^d} f(x)\,d\mu_n(x)=\int_{\mathbb{R}^d} f(x) \,d\mu(x), \quad  \forall f \in C_b(\mathbb{R}^d).
\end{equation*}

For $p\in[1,+\infty)$, the Wasserstein space of order $p$ is defined as
\begin{equation*}
\mathcal{P}_p(\mathbb{R}^d):=\left\{m\in\mathcal{P}(\mathbb{R}^d): \int_{\mathbb{R}^d} |x_0-x|^p\,dm(x) <+\infty\right\},
\end{equation*}
where $x_0 \in \mathbb{R}^d$ is arbitrary.
Given any two measures $m$ and $m^{\prime}$ in $\mathcal{P}_p(\mathbb{R}^n)$,  define
\[
\Pi(m,m'):=\Big\{\lambda\in\mathcal{P}(\mathbb{R}^d \times \mathbb{R}^d): \lambda(A\times \mathbb{R}^d)=m(A),\ \lambda(\mathbb{R}^d \times A)=m'(A),\ \forall A\in \mathcal{B}(\mathbb{R}^d)\Big\}.
\]
The Wasserstein distance of order $p$ between $m$ and $m'$ is defined by
    \begin{equation*}\label{dis1}
          d_p(m,m')=\inf_{\lambda \in\Pi(m,m')}\left(\int_{\mathbb{R}^d\times \mathbb{R}^d}|x-y|^p\,d\lambda(x,y) \right)^{1/p}.
    \end{equation*}
    The distance $d_1$ is also commonly called the Kantorovich-Rubinstein distance and can be characterized by a useful duality formula (see, for instance, \cite{bib:CV})  as follows
\begin{equation}\label{eq:2-100}
d_1(m,m')=\sup\left\{\int_{\mathbb{R}^d} f(x)\,dm(x)-\int_{\mathbb{R}^d} f(x)\,dm'(x) \ |\ f:\mathbb{R}^d\rightarrow\mathbb{R} \ \ \text{is}\ 1\text{-Lipschitz}\right\},
\end{equation}
for all $m$, $m'\in\mathcal{P}_1(\mathbb{R}^d)$.

We recall some definitions and results for the constrained MFG system 
\begin{equation}\label{eq:lab1}
\begin{cases}
\ -\partial _{t} u^{T} + H(x, Du^{T})=F(x, m^{T}(t)) & \text{in} \quad (0,T)\times\OOO, \\ \  \partial _{t}m^{T}-\text{div}\Big(m^{T}V(t,x)\Big)=0  & \text{in} \quad (0,T)\times\OOO,  \\ \ m^{T}(0)=m_{0}, \quad u^{T}(T,x)=u^{f}(x), & x\in\OOO,
\end{cases}
\end{equation}
where 
\begin{align*}
V(t,x)=
\begin{cases}
	D_{p}H(x, Du^{T}(t,x)), & (t,x) \in [0,T] \times (\supp(m^{T}(t)) \cap \OO),
	\\
	D_{p}H(x, D^{\tau}u^{T}(t,x)+\lambda_{+}(t,x)\nu(x)), & (t,x) \in [0,T] \times (\supp(m^{T}(t)) \cap \partial\OO)
\end{cases}
\end{align*}
	and $\lambda_{+}$ is defined in \cite[Proposition 2.5]{bib:CCC2}.

Let 
	\begin{equation*}
	\Gamma=\{\gamma \in AC([0,T]; \R^{d}): \gamma(t) \in \OOO\ \text{for all}\ t \in [0,T]  \}.
	\end{equation*}
For any $x\in\OOO$, define
	\begin{equation*}
	\Gamma(x)=\{\gamma \in \Gamma: \gamma(0)=x  \}.
	\end{equation*}
For any $t \in [0,T]$, denote by $e_{t}: \Gamma \to \OOO$ the evaluation map, defined by 
\[
e_{t}(\gamma)=\gamma(t).
\]
For any $t \in [0,T]$ and any $\eta \in \PP(\Gamma)$, we define 
\[
m^{\eta}_{t}=e_{t} \sharp \eta \in \PP(\OOO)
\]
	where $e_{t} \sharp \eta$ stands for the image measure (or push-forward) of $\eta$ by $e_{t}$. Thus, for any $\varphi \in C(\overline{\Omega})$
	\begin{equation*}
		\int_{\overline{\Omega}}{\varphi(x)\ m^{\eta}_{t}(dx)}=\int_{\Gamma}{\varphi(\gamma(t))\ \eta(d\gamma)}. 
	\end{equation*}

For any fixed $m_{0} \in \PP(\OOO)$, denote by $\mathcal{P}_{m_{0}}$ the set of all Borel probability measures $\eta \in \PP(\Gamma)$ such that $m^{\eta}_{0}=m_{0}$.
For any $\eta \in \PP_{m_{0}}$ define the following functional
\begin{equation}\label{eq:evolutioncv}
J_{\eta}[\gamma]=\int_{0}^{T}{\Big(L(\gamma(s), \dot\gamma(s)) +F(\gamma(s), m^{\eta}_{s}) \Big)\ ds} + u^f(\gamma(T)),\quad \forall \gamma\in\Gamma.
\end{equation}
\begin{definition}[Constrained MFG equilibrium]\label{def:equili}
Let $m_{0} \in \PP(\OOO)$. We say that $\eta \in \PP_{m_{0}}(\Gamma)$ is a constrained MFG equilibrium for $m_{0}$ if
\begin{equation*}
\supp(\eta) \subset \Gamma^*_{\eta}:=\bigcup_{x \in \OOO} \Gamma^{\eta}(x),
\end{equation*}
where
\begin{equation*}
\Gamma^{\eta}(x)=\left\{\gamma^* \in \Gamma(x): J_{\eta}[\gamma^*]=\min_{\gamma\in\Gamma(x)} J_{\eta}[\gamma] \right\}.
\end{equation*}
\end{definition}

Assume that $L\in C^1(\OOO\times\mathbb{R}^d)$ is convex with respect to the second argument and satisfies: there are $C_i>0$, $i=1,2,3,4$, such that for all $(x,v)\in\OOO\times\mathbb{R}^d$, there hold
\[
|D_{v}L(x,v)| \leq C_1(1+|v|),\quad  |D_{x}L(x,v)| \leq C_2(1+|v|^{2}),\quad  C_3|v|^{2}- C_4 \leq L(x,v).
\]

Under these assumptions on $L$, assuming that $F:\OOO\times\mathcal{P}(\OOO)\to\mathbb{R}$ and $u^{f}:\OOO\to\mathbb{R}$ are continuous functions it has been proved in \cite[Theorem 3.1]{bib:CC} that there exists at least one constrained MFG equilibrium.

\begin{definition}[Mild solutions of constrained MFG system]\label{def:defmild}
We say that $(u^T,m^T)\in C([0,T]\times\OOO)\times C([0,T];\mathcal{P}(\OOO))$ is a mild solution of the constrained MFG problem in $\OOO$, if there is a constrained MFG equilibrium $\eta\in\mathcal{P}_{m_0}(\Gamma)$ such that
\begin{enumerate}
  \item [(i)] $m^T(t)=e_t\sharp \eta$ for all $t\in [0,T]$;
  \item [(ii)] $u^T$ is given by
  \begin{equation}\label{eq:MFGValuefunction}
  u^T(t,x)=\inf_{\gamma\in\Gamma,  \gamma(t)=x} \Big\{\int_t^T\big(L(\gamma(s),\dot{\gamma}(s))+F(\gamma(s),m^T(s))\big)ds+u^f(\gamma(T))\Big\},
  \end{equation}
  for all $(t,x)\in[0,T]\times\OOO$.
\end{enumerate}
	\end{definition}
	
	The existence of a mild solution $(u^{T}, m^{T})$ is of constraint MFG system on $[0,T]$ is a direct consequence of the existence of constrained MFG equilibrium.

In addition, assume that $F$ is strictly monotone, i.e., 
\[
\int_{\OOO}(F(x,m_1)-F(x,m_2))d(m_1-m_2)(x)\geq 0,
\]
for all $m_1$, $m_2\in\mathcal{P}(\OOO)$ and $\int_{\OOO}(F(x,m_1)-F(x,m_2))d(m_1-m_2)(x)=0$ if and only if $F(x,m_1)=F(x,m_2)$ for all $x\in\OOO$. Cannarsa and Capuani \cite{bib:CC} proved that if $(u^T_1,m^T_1)$, $(u^T_2,m^T_2)$ are mild solutions, then $u^T_1=u^T_2$. Moreover, they also provided examples of coupling functions $F$ for which also the distribution $m^{T}$ is unique under the monotonicity assumption.

\subsection{Weak KAM theory on Euclidean space}
In this part we recall some definitions and results in the weak KAM theory on the Euclidean space. Most of the results are due to Fathi \cite{bib:FM} and Contreras \cite{bib:GC}.

\medskip

\noindent $\bullet$ \emph{Tonelli Lagrangians and Hamiltonians}.
Let $L: \mathbb{R}^{n} \times \mathbb{R}^{n} \to \mathbb{R}$ be a $C^{2}$ Lagrangian.

\begin{definition}[Strict Tonelli Lagrangians]\label{def:def2}
$L$ is called a {\it strict Tonelli Lagrangian} if there exist positive constants $C_{i}$ ($i=1,2,3$) such that, for all $(x,v) \in \mathbb{R}^{d} \times \mathbb{R}^{d}$ there hold
\begin{itemize}
\item[(a)] $\frac{I}{C_{1}} \leq D_{vv}^{2}L(x,v) \leq C_{1} I$, where $I$ is the identity matrix;
	\item[(b)] $\|D^{2}_{vx}L(x,v)\| \leq C_{2}(1+|v|)$;
	\item[(c)] $|L(x,0)|+|D_{x}L(x,0)|+ |D_{v}L(x,0)| \leq C_{3}$.
	 \end{itemize}
\end{definition}

\begin{remarks}\label{rem:re2.1}
Let $L$ be a strict Tonelli Lagrangian.  It is easy to check that there are two positive constants $\alpha$, $\beta$ depending only on $C_{i}$ ($i=1,2,3$) in \Cref{def:def2},  such that
	\begin{itemize}
\item[($e$)]$|D_{v}L(x,v)| \leq \alpha(1+|v|)$, \quad $\forall (x,v)\in \mathbb{R}^d\times\mathbb{R}^d$;
\item[($f$)] $|D_{x}L(x,v)| \leq \alpha(1+|v|^{2})$, \quad $\forall (x,v)\in \mathbb{R}^d\times\mathbb{R}^d$;
\item[($g$)]$\frac{1}{4\beta}|v|^{2}- \alpha \leq L(x,v) \leq 4\beta |v|^{2} +\alpha$, \quad $\forall (x,v)\in \mathbb{R}^d\times\mathbb{R}^d$;
\item[($h$)]$\sup\big\{L(x,v): x\in \mathbb{R}^d, |v| \leq R \big\} < +\infty$, \quad $\forall R\geq 0$.
\end{itemize}
\end{remarks}

Define the Hamiltonian $H: \mathbb{R}^{d} \times \mathbb{R}^{d} \to \mathbb{R}$ associated with $L$ by
$$H(x,p)=\sup_{v \in \mathbb{R}^{d}} \Big\{ \big\langle p,v \big\rangle -L(x,v) \Big\}, \quad  \forall (x,p) \in \mathbb{R}^{d} \times \mathbb{R}^{d}.$$
It is straightforward to check that if $L$ is a strict Tonelli Lagrangian, then $H$ satisfies ($a$), ($b$), and ($c$) in \Cref{def:def2}. Such a function $H$ is called a strict Tonelli Hamiltonian.

If $L$ is a {\em reversible} Lagrangian, i.e., $L(x,v)=L(x,-v)$ for all $(x,v) \in \mathbb{R}^{n} \times \mathbb{R}^{n}$, then  $H(x,p)=H(x,-p)$ for all $(x,p) \in \mathbb{R}^{d} \times \mathbb{R}^{d}$.

\medskip

{\it We always work with Tonelli Lagrangians and Hamiltonians, if not stated otherwise.}

\medskip
\noindent $\bullet$ \emph{Invariant measures and holonomic measures}.
The Euler-Lagrange equation associated with $L$
\begin{equation}\label{eq:EL} \frac{d}{dt}D_{v}L(x, \dot x)=D_{x}L(x, \dot x),  \end{equation}
generates a flow of diffeomorphisms $\phi_{t}^{L}: \mathbb{R}^{d} \times \mathbb{R}^{d} \to \mathbb{R}^{d} \times \mathbb{R}^{d}$, with $t \in \mathbb{R}$, defined by
\begin{equation*}\label{lab11} \phi_{t}^{L}(x_{0},v_{0})=( x(t), \dot x(t)), \end{equation*}
where $x: \mathbb{R} \to \mathbb{R}^{d}$ is the maximal solution of \cref{eq:EL} with initial conditions $x(0)=x_{0}, \ \dot x(0)=v_{0}$. It should be noted that, for any Tonelli Lagrangian $L$, the flow $\phi_{t}^{L}$ is complete \cite[Corollary 2.2]{bib:FM}.

We recall that a Borel probability measure $\mu$ on $\mathbb{R}^{d} \times \mathbb{R}^{d}$ is called $\phi_{t}^{L}$-invariant, if $$\mu(B)=\mu(\phi_{t}^{L}(B)), \quad  \forall t \in \mathbb{R}, \quad  \forall B \in \mathcal{B}(\mathbb{R}^{d} \times \mathbb{R}^{d}),$$ or, equivalently,  $$\int_{\mathbb{R}^{d} \times \mathbb{R}^{d}} {f(\phi_{t}^{L}(x,v))\ d\mu(x,v)}=\int_{\mathbb{R}^{d} \times \mathbb{R}^{d}}{f(x,v)\ d\mu(x,v)}, \quad \forall f \in C^{\infty}_{c}(\mathbb{R}^{d} \times \mathbb{R}^{d}).$$ We denote by $\mathfrak{M}_{L}$ the set of all $\phi_{t}^{L}$-invariant Borel probability measures on $\mathbb{R}^{d} \times \mathbb{R}^{d}$.

Let $C^0_{l}$ be the set of all continuous functions $f:\mathbb{R}^d\times \mathbb{R}^d\to \mathbb{R}$ satisfying
\[
\sup_{(x,v)\in \mathbb{R}^d\times \mathbb{R}^d}\frac{|f(x,v)|}{1+|v|}<+\infty
\]
endowed with the topology induced by the uniform convergence on compact subsets.
Denote by $(C^0_{l})'$ the dual of $C^0_{l}$.
Let $\gamma:[0,T]\to \mathbb{R}^d$ be a closed absolutely continuous curve for some $T>0$. Define a probability measure $\mu_{\gamma}$ on the Borel $\sigma$-algebra of $\mathbb{R}^d\times \mathbb{R}^d$ by
\[
\int_{\mathbb{R}^d\times \mathbb{R}^d} fd\mu_{\gamma}=\frac{1}{T}\int_0^Tf(\gamma(t),\dot{\gamma}(t))dt
\]
for all $f\in C^0_{l}$. Let $\mathcal{K}(\mathbb{R}^d)$ denote the set of such $\mu_{\gamma}$'s. We call  $\overline{\mathcal{K}(\mathbb{R}^d)}$ the set of holonomic measures, where $\overline{\mathcal{K}(\mathbb{R}^d)}$ denotes the closure of $\mathcal{K}(\mathbb{R}^d)$ with respect to
the topology induced by the weak convergence on $(C^0_{l})'$. By \cite[2-4.1 Theorem]{bib:Con-b}, we have that
\begin{align}\label{eq:3-40}
\mathfrak{M}_{L}\subseteq \overline{\mathcal{K}(\mathbb{R}^d)}.
\end{align}

\medskip

\noindent $\bullet$ \emph{Ma\~n\'e's critical value}.
If $[a,b]$ is a finite interval with $a<b$ and $\gamma:[a,b]\to \mathbb{R}^d$ is an absolutely continuous curve,
we define its $L$ action as
\[
A_{L}(\gamma)=\int_a^bL(\gamma(s),\dot{\gamma}(s))ds.
\]
The critical value of the Lagrangian $L$, which was introduced by Ma\~n\'e in \cite{bib:Man97}, is defined as follows:
\begin{equation}\label{eq:manedef1}
c_L:=\sup\{k\in\mathbb{R}: A_{L+k}(\gamma)<0 \ \text{for some closed  absolutely continuous curve}\ \gamma\}.
\end{equation}
Since $\mathbb{R}^{d}$ can be seen as a covering of the torus $\mathbb{T}^{d}$, Ma\~n\'e's critical value has the following representation formula~\cite[Theorem A]{bib:CIPP}:
\begin{equation}\label{eq:lab55}
c_L=\inf_{u \in C^{\infty}(\mathbb{R}^{n})} \sup_{ x \in \mathbb{R}^{n}} H(x, Du(x)).
\end{equation}
By \cite[2-5.2 Theorem]{bib:Con-b}, $c_L$ can be also characterized in the following way:
\begin{equation}\label{eq:3-33}
c_L=-\inf\big\{B_L(\nu): \nu\in \overline{\mathcal{K}(\mathbb{R}^d)}\big\}
\end{equation}
where the action $B_{L}$ is defined as
\begin{equation*}
	B_{L}(\nu)=\int_{\R^{d} \times \R^{d}}{L(x,v)\ \nu(dx,dv)}
\end{equation*}
for any $\nu \in \overline{\mathcal{K}(\R^{d})}$. 

If $L$ is a reversible Tonelli Lagrangian, and
\[
\argmin_{x\in\mathbb{R}^d}L(x,0)\neq\emptyset,
\]
then for $x\in \argmin_{x\in\mathbb{R}^d}L(x,0)$, the atomic measure supported at $(x,0)$, $\delta_{(x,0)}$, is a $\phi_{t}^{L}$-invariant probability measure, i.e., $\delta_{(x,0)}\in \mathfrak{M}_{L}$. Note that
\[
B_L(\delta_{(x,0)})\leq B_L(\nu),\quad \forall \nu\in \overline{\mathcal{K}(\mathbb{R}^d)},
\]
which, together with \cref{eq:3-40} and \cref{eq:3-33}, implies that
\begin{equation}\label{eq:3-34}
c_L=-\min\big\{B_L(\nu): \nu\in \mathfrak{M}_L\big\}.
\end{equation}
In view of \cref{eq:3-34}, it is straightforward to see that
\begin{equation}\label{eq:3-35}
c_L=-\min_{x\in\mathbb{R}^d}L(x,0).
\end{equation}

\medskip

\noindent $\bullet$ \emph{Weak KAM theorem}.
Let us recall definitions of weak KAM solutions and viscosity solutions of the Hamilton-Jacobi equation
\begin{align}\label{eq:hj}\tag{HJ}
H(x,Du)=c,
\end{align}
where $c$ is a real number.

\begin{definition}[Weak KAM solutions]\label{def:def3}
A function $u \in C(\mathbb{R}^{d})$ is called a backward weak KAM solution of equation \cref{eq:hj} with $c=c_L$, if it satisfies the following two conditions:
\begin{itemize}
\item[($i$)] for each continuous and piecewise $C^{1}$ curve $\gamma:[t_{1}, t_{2}] \to \mathbb{R}^{d}$, we have that $$u(\gamma(t_{2}))-u(\gamma(t_{1})) \leq \int_{t_{1}}^{t_{2}}{L(\gamma(s), \dot\gamma(s))ds}+c_L(t_{2}-t_{1});$$
\item[($ii$)] for each $x \in \mathbb{R}^{d}$, there exists a $C^{1}$ curve $\gamma:(-\infty,0] \to \mathbb{R}^{d}$ with $\gamma(0)=x$ such that
\[
u(x)-u(\gamma(t))=\int_{t}^{0}{L(\gamma(s), \dot\gamma(s))ds}-c_Lt, \quad  \forall t<0.
\]
 \end{itemize}
\end{definition}
\begin{remarks}\label{rem:re1}
Let $u$ be a function on $\mathbb{R}^d$.
 A $C^1$ curve $\gamma:[a,b]\to \mathbb{R}^d$ with $a<b$ is said to be $(u,L,c_L)$-calibrated, if it satisfies
\[
u(\gamma(t'))-u(\gamma(t))=\int_{t}^{t'}{L(\gamma(s), \dot\gamma(s))ds}+c_L(t'-t), \quad  \forall a\leq t<t'\leq b.
\]
It is not difficult to check that if $u$ satisfies condition ($i$) in \Cref{def:def3}, then the curves appeared in condition ($ii$) in \Cref{def:def3} are necessarily $(u,L,c_L)$-calibrated curves.
\end{remarks}

\begin{definition}[Viscosity solutions]\label{def:visco}
\begin{itemize}
		\item [($i$)] A function $u\in C(\R^{d})$ is called a viscosity subsolution of equation \cref{eq:hj} if for every $\varphi\in C^1(\R^{d})$ at any
local maximum point $x_0$ of $u-\varphi$ on $\R^{d}$ the following holds:
		\[
		H(x_0,D\varphi(x_0))\leq c;
		\]
		\item [($ii$)] A function $u\in C(\R^{d})$ is called a viscosity supersolution of equation \cref{eq:hj} if for every $\varphi\in C^1(\R^{d})$ at any
local minimum point $x_0$ of $u-\varphi$ on $\R^{d}$ the following holds:
		\[
		H(y_0,D\psi(y_0))\geq c;
		\]
		\item [($iii$)] $u$ is a viscosity solution of equation \cref{eq:hj} on $\R^{d}$ if it is both a viscosity subsolution and a viscosity supersolution on $\R^{d}$.
	\end{itemize}
\end{definition}

In \cite{bib:FM} Fathi and Maderna got the existence of backward weak KAM solutions (or, equivalently, viscosity solutions) for $c = c_{L}$.

\section{Hamilton-Jacobi equations with state constraints}
\label{sec:HJstateconstraints}

\subsection{Constrained viscosity solutions.}
Let us recall the notion of constrained viscosity solutions of equation \cref{eq:hj} on $\overline{\Omega}$, see for instance \cite{bib:Soner}.
\begin{definition}[Constrained viscosity solutions]\label{def:scv}
$u\in C(\overline{\Omega})$ is said to be a constrained viscosity solution of \cref{eq:hj} on $\overline{\Omega}$ if it is a subsolution on $\Omega$ and a supersolution on $\overline{\Omega}$.
\end{definition}

Consider the state constraint problem for equation \cref{eq:hj} on $\overline{\Omega}$:
\begin{align}
H(x,Du(x))& \leq c  \quad\text{in}\ \ \Omega,\label{eq:3-50}\\
H(x,Du(x))& \geq c  \quad\text{on}\ \ \overline{\Omega}.\label{eq:3-51}
\end{align}
Mitake \cite{bib:Mit} showed that there exists a unique constant, denoted by $c_H$, such that problem \cref{eq:3-50}-\cref{eq:3-51} admits solutions. Moreover, $c_H$ can be characterized by
\begin{align}\label{eq:3-55}
c_H=\inf\{c\in\mathbb{R}: \cref{eq:3-50}\ \text{has\ a\ solution}\}=\inf_{\varphi \in W^{1,\infty}(\OO)} \esssup_{x \in \OO} H(x, D\varphi(x)).
\end{align}
See \cite[Theorem 3.3, Theorem 3.4, Remark 2]{bib:Mit} for details.

Furthermore, by standard comparison principle for viscosity solutions it is easy to prove that following representation formula for constrained viscosity solutions holds true.

\begin{proposition}[Representation formula for constrained viscosity solutions]\label{prop:for}
$u \in C(\OOO)$ is a constrained viscosity solution of \cref{eq:hj} on $\overline{\Omega}$ for $c=c_H$ if and only if
\begin{equation}\label{eq:LaxOleinik}
u(x)=\inf_{\gamma\in \mathcal{C}(x;t)} \left\{u(\gamma(0))+\int_{0}^{t}{L(\gamma(s), \dot\gamma(s))\ ds} \right\}+c_Ht, \quad \forall x \in \OOO,\ \forall t>0,
\end{equation}
where $\mathcal{C}(x;t)$ denotes the set of all curves $\gamma\in AC([0,t],\OOO)$ with $\gamma(t)=x$.
\end{proposition}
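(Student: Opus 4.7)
The plan is to reduce the stationary constrained problem to the evolutionary Hamilton--Jacobi equation with state constraints, which enjoys a classical comparison principle. Set
\begin{equation*}
w(t,x) := \inf_{\gamma \in \mathcal{C}(x;t)} \left\{u(\gamma(0)) + \int_0^t L(\gamma(s), \dot\gamma(s))\, ds\right\}, \qquad (t,x) \in (0,\infty) \times \OOO,
\end{equation*}
so that \cref{eq:LaxOleinik} is equivalent to $w(t,x) = u(x) - c_H t$ for every $t>0$ and every $x\in\OOO$. The dynamic programming principle for $w$, combined with standard arguments in the theory of state-constrained optimal control (Soner \cite{bib:Soner}), shows that $w$ is a constrained viscosity solution of the evolutionary equation
\begin{equation*}
\partial_t w + H(x, D_x w) = 0 \quad \text{on } (0,\infty) \times \OOO, \qquad w(0,\cdot) = u.
\end{equation*}

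For the $(\Leftarrow)$ direction, assume \cref{eq:LaxOleinik}. Then $w(t,x) = u(x) - c_H t$, so $\partial_t w \equiv -c_H$ and $D_x w = Du$ in the viscosity sense; the evolutionary equation for $w$ therefore collapses to $H(x,Du) = c_H$ on $\OOO$ in the constrained viscosity sense, which is exactly the statement that $u$ solves \cref{eq:hj} with $c = c_H$. For the $(\Rightarrow)$ direction, assume $u$ is a constrained viscosity solution of \cref{eq:hj} with $c=c_H$. Then a direct check shows that $\widetilde w(t,x) := u(x) - c_H t$ is also a constrained viscosity solution of the evolutionary equation above with initial datum $u$. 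Applying the standard comparison principle for state-constrained evolutionary Hamilton--Jacobi equations (see, e.g., \cite{bib:Soner,bib:Mit}) to $\widetilde w$ and $w$, both continuous on $[0,\infty)\times\OOO$ and coinciding at $t=0$, yields $\widetilde w \equiv w$, which is exactly \cref{eq:LaxOleinik}.

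The main obstacle is in the first ingredient: verifying that $w$ is a \emph{constrained} viscosity solution on all of $\OOO$ and not merely a viscosity solution in $\Omega$. The subsolution property in $\Omega$ follows from the dynamic programming principle applied to small interior perturbations of optimal trajectories, while the supersolution property up to the boundary $\partial\Omega$ requires, for each $x\in\partial\Omega$, the construction of admissible competitors in $\mathcal{C}(x;t)$ that stay in $\OOO$; this uses the $C^2$ regularity of $\partial\Omega$ (through the signed distance $b_\Omega$) together with the superlinear coercivity of $L$ in $v$ to absorb the cost of tangential retraction. Once this technical point is granted, the comparison principle closes the argument and no regularity of $u$ beyond continuity is needed.
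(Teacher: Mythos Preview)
Your approach is precisely the one the paper alludes to: the paper does not spell out a proof but simply states that the representation formula follows ``by standard comparison principle for viscosity solutions,'' and you have correctly identified the evolutionary comparison (as in \cite{bib:Soner,bib:Mit}) as the engine, together with the fact that both $w$ and $\widetilde w(t,x)=u(x)-c_H t$ are constrained viscosity solutions of $\partial_t w+H(x,D_xw)=0$ with the same initial datum. Your discussion of the technical point (supersolution property of the value function up to $\partial\Omega$) is appropriate and is exactly the place where the $C^2$ regularity of $\partial\Omega$ and the Tonelli structure of $L$ enter.
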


\begin{remarks}
If $u$ is a constrained viscosity solution of \cref{eq:hj} on $\overline{\Omega}$ for $c=c_H$, then by \Cref{prop:for} and \cite[Theorem 5.2]{bib:Mit} one can deduce that $u\in W^{1,\infty}(\Omega)$. Thus, $u$ is Lipschitz in $\Omega$, since $\Omega$ is open and bounded with $\partial \Omega$ of class $C^2$ (see, for instance, \cite[Chapter 5]{bib:Eva}).
\end{remarks}

\begin{proposition}[Equi-Lipschitz continuity of constrained viscosity solutions]\label{prop:equiliplemma}
Let $u$ be a constrained viscosity solution of \cref{eq:hj} on $\overline{\Omega}$ for $c=c_H$. Then, $u$ is Lipschitz continuous on $\OOO$ with a Lipschitz constant $K_1>0$ depending only on $H$.
\end{proposition}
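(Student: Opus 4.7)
The plan is to combine the representation formula in \Cref{prop:for} with the quasiconvexity of $\overline{\Omega}$. Since $\partial\Omega$ is of class $C^{2}$ and $\Omega$ is bounded, $\overline{\Omega}$ satisfies a uniform interior cone condition, from which one deduces the existence of a constant $C_{\Omega}\geq 1$, depending only on $\Omega$, such that any two points $x,y\in\overline{\Omega}$ can be joined by an arclength-parametrized absolutely continuous curve $\gamma_{xy}\colon[0,T_{xy}]\to\overline{\Omega}$ with $\gamma_{xy}(0)=y$, $\gamma_{xy}(T_{xy})=x$, $|\dot\gamma_{xy}|\equiv 1$, and $T_{xy}\leq C_{\Omega}|x-y|$. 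This is the only ingredient coming from the geometry of the constraint set.

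With this curve at hand, I would use $\gamma_{xy}$ as a competitor in the representation formula \cref{eq:LaxOleinik} at time $t=T_{xy}$, which yields
\[
u(x)-u(y)\;\leq\;\int_{0}^{T_{xy}} L\bigl(\gamma_{xy}(s),\dot\gamma_{xy}(s)\bigr)\,ds + c_{H}\,T_{xy}.
\]
Since $\gamma_{xy}(s)\in\overline{\Omega}$ and $|\dot\gamma_{xy}(s)|=1$, property $(h)$ in \Cref{rem:re2.1} provides a uniform bound $L(\gamma_{xy}(s),\dot\gamma_{xy}(s))\leq M$, where
\[
M:=\sup\bigl\{L(z,v)\colon z\in\overline{\Omega},\ |v|\leq 1\bigr\}<+\infty
\]
depends only on $L$, hence only on $H$. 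The constant $c_{H}$ is itself intrinsic to $H$ (and $\Omega$) through \cref{eq:3-55} and is in particular finite. Therefore
\[
u(x)-u(y)\;\leq\;(M+c_{H})\,C_{\Omega}\,|x-y|,
\]
and interchanging the roles of $x$ and $y$ gives the reverse inequality with the same constant. Setting $K_{1}:=C_{\Omega}(M+|c_{H}|)$ then yields the desired equi-Lipschitz bound, independent of the particular constrained viscosity solution $u$.

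The proof has essentially no nontrivial obstacle beyond the quasiconvexity of $\overline{\Omega}$, which is a classical fact for domains with $C^{2}$ boundary. Note that the uniformity in $u$ comes for free: both $M$ and $c_{H}$ are fixed once $H$ and $\Omega$ are fixed, and $C_{\Omega}$ is a purely geometric constant. Hence the Lipschitz estimate is the same for every constrained viscosity solution, which is exactly the content of \Cref{prop:equiliplemma}.
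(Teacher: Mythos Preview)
Your proposal is correct and follows essentially the same approach as the paper: both use the $C$-quasiconvexity of $\overline{\Omega}$ to produce a curve of length at most $C|x-y|$ with speed bounded by $1$, plug it into the representation formula \cref{eq:LaxOleinik}, bound the Lagrangian by $\sup_{z\in\overline{\Omega},\,|v|\le 1}L(z,v)$, and then swap $x$ and $y$. The only cosmetic differences are that the paper allows $|\dot\gamma|\le 1$ a.e.\ rather than exact arclength parametrization, and writes the final constant as $C\bigl(\sup L+c_H\bigr)$ rather than your $C_\Omega(M+|c_H|)$.
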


\begin{proof}
Recall that $\OO$ is a bounded domain with $C^{2}$ boundary. By  \Cref{rem:quasi}--(iii) below $\OO$ is $C$-quasiconvex for some $C>0$. Thus, for any $x$, $y\in\OOO$, there is an absolutely continuous curve $\gamma:[0,\tau(x,y)]\to\OOO$ connecting $x$ and $y$, with $0<\tau(x,y)\leq C|x-y|$ and $|\dot{\gamma}(t)|\leq 1$ a.e. in $[0,\tau(x,y)]$. In view of \cref{eq:LaxOleinik}, we deduce that
\[
u(x)-u(y)\leq\int_{0}^{t}L(\gamma(s), \dot{\gamma}(s))\ ds+c_Ht, \quad \forall t>0.
\]
Hence, we get that
\begin{equation*}
u(x)-u(y)\leq C\cdot\Big(\sup_{x\in\OOO,|v|\leq 1}L(x,v)+c_H\Big)\cdot|x-y|	
	\end{equation*}
By exchanging the roles of $x$ and $y$, we get that
\begin{equation*}
|u(x)-u(y)| \leq K_1|x-y|, 	
\end{equation*}
where $K_1:=C\cdot\Big(\sup_{x\in\OOO,|v|\leq 1}L(x,v)+c_H\Big)$ depending only on $H$ and $\OO$.
	\end{proof}

\begin{remarks}\label{rem:quasi}
Let $U\subseteq \mathbb{R}^d$ be a connected open set.

(i) For any $x \in \bar U$ and $C >0$, we say that $y \in \bar U$ is a $(x, C)$-reachable in $U$, if there exists a curve $\gamma \in \text{AC}([0,\tau(x,y)]; \bar U)$ for some $\tau(x,y) > 0$ such that $|\dot\gamma(t)| \leq 1$ a.e. in $[0, \tau(x,y)]$, $\gamma(0)=x$, $\gamma(\tau(x,y))=y$ and $\tau(x,y) \leq C|x-y|$. We denote by $\mathcal{R}_{C}(x,y)$ the set of all $(x,C)$-reachable points from $x \in  U$. We say that $U$ is $C$-quasiconvex if for any $x \in \bar U$ we have that $\mathcal{R}_{C}(x,U)= U$.

(ii) $U$ is called a Lipschitz domain, if $\partial U$ is locally Lipschitz, i.e., $\partial U$ can be locally represented as the graph of a Lipschitz function defined on some open ball of $\R^{d-1}$.

(iii) $U$ is $C$-quasiconvex for some $C>0$ if $U$ is a bounded Lipschitz domain (see, for instance, Sections 2.5.1 and 2.5.2 in \cite{bib:BB1}). Since $\OO$ is a bounded domain with $C^2$ boundary, then it is $C$-quasiconvex for some $C>0$.
\end{remarks}

Consider the assumption

\medskip

\noindent {\bf (A1)} $\displaystyle{\argmin_{x \in \R^{d}}}\ L(x,0) \cap \OOO \not= \emptyset$.

\begin{proposition}\label{prop:criticalvalues}
Let $H$ be a reversible Tonelli Hamiltonian. Assume {\bf (A1)}. Then, $c_H=c_L$.
\end{proposition}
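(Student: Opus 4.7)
The plan is to establish $c_H=c_L$ by proving the two inequalities separately, each exploiting a different available characterization of the constants.

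\textbf{Upper bound $c_H\le c_L$.} I would compare the variational characterizations \cref{eq:3-55} and \cref{eq:lab55}. For any $u\in C^{\infty}(\R^{d})$, continuity of $Du$ on the compact set $\OOO$ ensures that $u|_{\OO}\in W^{1,\infty}(\OO)$, so that $u|_{\OO}$ is admissible in the infimum defining $c_H$. Since
\[
\esssup_{x\in\OO}H(x,Du(x))\le\sup_{x\in\R^{d}}H(x,Du(x)),
\]
taking the infimum on the right over $u\in C^{\infty}(\R^{d})$ immediately yields $c_H\le c_L$.

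\textbf{Lower bound $c_H\ge c_L$.} Here I would use the representation formula \Cref{prop:for}: fix any constrained viscosity solution $u$ of \cref{eq:hj} on $\OOO$ with $c=c_H$ (whose existence is guaranteed by \cite{bib:Mit}); then
\[
u(x)\le u(\gamma(0))+\int_{0}^{t}L(\gamma(s),\dot\gamma(s))\,ds+c_{H}t
\]
for every $x\in\OOO$, $t>0$ and every $\gamma\in\mathcal{C}(x;t)$. By assumption \textbf{(A1)}, I can pick $x_{0}\in\argmin_{x\in\R^{d}}L(x,0)\cap\OOO$ and test this inequality with the constant curve $\gamma(s)\equiv x_{0}$, which belongs to $\mathcal{C}(x_{0};t)$ precisely because $x_{0}\in\OOO$. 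This yields $0\le t\bigl(L(x_{0},0)+c_{H}\bigr)$, i.e.\ $c_{H}\ge -L(x_{0},0)$. Since $L$ is a reversible Tonelli Lagrangian and $x_{0}\in\argmin_{x\in\R^{d}}L(\cdot,0)$, the identity \cref{eq:3-35} gives $-L(x_{0},0)=-\min_{x\in\R^{d}}L(x,0)=c_{L}$, finishing the argument.

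No substantial obstacle is expected: the proof is essentially immediate once \Cref{prop:for} and \cref{eq:3-35} are available. The role of \textbf{(A1)} is precisely to place a global minimizer of $L(\cdot,0)$ inside $\OOO$, so that the trivial constant trajectory at that point is admissible for the state-constrained problem; without it one would only obtain the weaker bound $c_{H}\ge -\min_{x\in\OOO}L(x,0)$, which need not reach $c_{L}$. The reversibility assumption enters only through the explicit formula for $c_L$ provided by \cref{eq:3-35}.
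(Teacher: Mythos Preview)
Your argument is correct. Both inequalities are valid and the role of reversibility and of {\bf (A1)} is identified accurately.

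The paper proceeds a bit differently on each side. For $c_H\le c_L$ it invokes the existence of a global viscosity solution on $\R^d$ from \cite{bib:FM} and restricts it to $\OO$ to obtain a subsolution there; you instead compare the two inf--sup formulas \cref{eq:3-55} and \cref{eq:lab55} directly, which is marginally more self-contained. For $c_H\ge c_L$ the paper stays on the Hamiltonian side: by convexity and reversibility $H(x,p)\ge H(x,0)$ for every $p$, so \cref{eq:3-55} immediately gives $c_H\ge\max_{x\in\OOO}H(x,0)\ge -\min_{x\in\OOO}L(x,0)$, and {\bf (A1)} together with \cref{eq:3-35} finishes. Your route is dynamical: you use the representation formula of \Cref{prop:for} with the constant curve at a point of $\argmin_{x\in\R^d}L(x,0)\cap\OOO$. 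Both approaches are short; yours trades the pointwise inequality $H(x,p)\ge H(x,0)$ for the availability of \Cref{prop:for}, while the paper's version does not need the representation formula at all.
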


\begin{proof}
By \cite[Theorem 1.1]{bib:FM}, there exists a global viscosity solution $u_{d}$ of equation \cref{eq:hj} with $c=c_L$.
Since $\OO$ is an open subset of $\R^{d}$, by definition $u_{d}\big|_{\OO}$ is solution of \cref{eq:3-50} for $c=c_L$. Thus, $c_H \leq c_L$.

Recalling the characterization \cref{eq:3-55}, since $\varphi$ is differentiable almost everywhere and $H$ is a reversible Tonelli Hamiltonian we have that
\begin{equation*}
c_{H}=\inf_{\varphi \in W^{1,\infty}(\OO)} \esssup_{x \in \OO} H(x, d\varphi(x)) \geq \sup_{x \in \OO} H(x,0) = \max_{x \in \OOO} H(x,0). 	
\end{equation*}
Therefore, by {\bf (A1)} and the fact that
\begin{equation*}
H(x,0)= - \inf_{v \in \R^{d}} L(x,v) \geq -L(x,0)	
\end{equation*}
we deduce that
\begin{equation*}
c_{H} \geq -\min_{x \in \OOO} L(x,0) = - \inf_{x \in \R^{d}} L(x,0) = c_{L},	
\end{equation*}
where the last equality holds by \cref{eq:3-35}. 
\end{proof}

{\it From now on, we assume that $L$ is a reversible Tonelli Lagrangian and denote by $c$ the common value of $c_L$ and $c_H$.}

\begin{remarks}
Comparing to the classical weak KAM solutions, see \Cref{def:def3}, one can call $u: \overline{\Omega} \to \R$ a constrained weak KAM solution if for any $t_{1} < t_{2}$ and any absolutely continuous curve $\gamma: [t_{1}, t_{2}] \to \overline{\Omega}$ we have 
\begin{equation*}
u(\gamma(t_{2}))-u(\gamma(t_{1})) \leq \int_{t_{1}}^{t_{2}}{L(\gamma(s), \dot\gamma(s))\ ds} + c(t_{2}-t_{1})	
\end{equation*}
 and, for any $x \in \overline{\Omega}$ there exists a $C^{1}$ curve $\gamma_{x}: (-\infty, 0] \to \overline{\Omega}$ with $\gamma_{x}(0)=x$ such that 
 \begin{equation*}
 u(x)-u(\gamma_{x}(t)) = \int_{t}^{0}{L(\gamma_{x}(s), \dot\gamma_{x}(s))\ ds}-t, \quad t <0. 
 \end{equation*}
One can easily see that a constrained weak KAM solution must be a constrained viscosity solution by definition and \Cref{prop:for}. In order to prove the opposite relation we need to go back to the $C^{1,1}$ regularity of solutions of the Hamiltonian system associated with a state constraint control problem. 
This will be the subject of a forthcoming paper. 
\end{remarks}

\subsection{Semiconcavity estimates of constrained viscosity solutions}

Here, we give a semiconcavity estimate for constrained viscosity solutions of \cref{eq:1-1}. Note that a similar result has been obtained in \cite[Corollary 3.2]{bib:CCC2} for a general calculus of variation problem under state constraints with a Tonelli Lagrangian and a regular terminal cost. Such regularity of the data allowed the authors to prove the semiconcavity result using the maximum principle which is not possible in our context: indeed, by the representation formula \cref{eq:LaxOleinik}, i.e.
\begin{equation*}
	u(x)=\inf_{\gamma\in\mathcal{C}(x;t)}\left\{u(\gamma(0))+\int^t_0L(\gamma(s),\dot{\gamma}(s))\ ds\right\}+ct,\quad\forall x\in\overline{\Omega},\  \forall t>0
\end{equation*}
one can immediately observe that the terminal cost is not regular enough to apply the maximum principle in \cite[Theorem 3.1]{bib:CCC1}. For these reasons, we decided to prove semiconcavity using a dynamical approach based on the properties of calibrated curves. 

Let $\Gamma^t_{x,y}(\overline{\Omega})$ be the set of all absolutely continuous curve $\gamma:[0,t]\to\overline{\Omega}$ such that $\gamma(0)=x$ and $\gamma(t)=y$. For each $x,y\in\overline{\Omega}$, $t>0$, let
\begin{align*}
	A_t(x,y)=A^{\overline{\Omega},L}_t(x,y)=\inf_{\gamma\in\Gamma^t_{x,y}(\overline{\Omega})}\int^t_0L(\gamma(s),\dot{\gamma}(s))\ ds.
\end{align*}

We recall that, since the boundary of $\Omega$ is of class $C^2$, there exists $\rho_0>0$ such that
\begin{equation}\label{eq:rho_0}
	b_{\Omega}(\cdot)\in C^2_b\ \text{on}\ \Sigma_{\rho_0}=\{y\in B(x,\rho_0): x\in\partial\Omega\}.
\end{equation}

Now, recall a result from \cite{bib:CC}.

\begin{lemma}\label{lem:lem_derivative}
Let $\gamma\in AC([0,T],\R^d)$ and suppose $d_{\Omega}(\gamma(t))<\rho_0$ for all $t\in[0,T]$. Then $d_{\Omega}\circ\gamma\in AC([0,T],\R)$ and
\begin{align*}
	\frac d{dt}(d_{\Omega}\circ\gamma)(t)=\langle Db_{\Omega}(\gamma(t)),\dot{\gamma}(t)\rangle\mathbf{1}_{\Omega^c}(\gamma(t)),\quad a.e., t\in[0,T].
\end{align*}	
\end{lemma}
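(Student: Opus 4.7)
The plan is to establish absolute continuity of $d_\OO\circ\gamma$ by the Lipschitz-composition principle and then identify the derivative by decomposing $[0,T]$ according to the position of $\gamma(t)$ relative to $\OOO$, treating each piece separately.

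First I would note that $d_\OO$ is globally $1$-Lipschitz on $\R^d$, so $d_\OO\circ\gamma$ inherits absolute continuity from $\gamma$ and has a classical derivative almost everywhere. I would then write $[0,T]=A\cup B\cup C$ where $A=\{t:\gamma(t)\in\OO\}$, $B=\{t:\gamma(t)\in\R^d\setminus\OOO\}$ are (relatively) open and $C=\{t:\gamma(t)\in\partial\OO\}$ is closed, and verify the identity on each piece. On $A$ one has $d_\OO\circ\gamma\equiv 0$ and $\mathbf{1}_{\OO^c}(\gamma(t))=0$, so both sides vanish a.e. On $B$, the hypothesis $d_\OO(\gamma(t))<\rho_0$ combined with $\gamma(t)\notin\OOO$ forces $\gamma(t)\in\Sigma_{\rho_0}$ (the nearest point of $\OOO$ lies on $\partial\OO$ and is within $\rho_0$); moreover $d_{\OO^c}(\gamma(t))=0$ there, so $d_\OO\circ\gamma=b_\OO\circ\gamma$ on $B$. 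Since $b_\OO$ is $C^2$ on $\Sigma_{\rho_0}$, the standard chain rule for the composition of a $C^1$ function with an AC curve yields $(d_\OO\circ\gamma)'(t)=\langle Db_\OO(\gamma(t)),\dot\gamma(t)\rangle$ a.e.\ on $B$, matching the right-hand side as $\mathbf{1}_{\OO^c}(\gamma(t))=1$ there.

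The main obstacle is the boundary-visiting set $C$: a direct chain rule is unavailable since $d_\OO$ fails to be differentiable on $\partial\OO$. The left-hand side vanishes a.e.\ on $C$ because $d_\OO\circ\gamma\equiv 0$ there and any absolutely continuous function has zero derivative almost everywhere on its zero level set. To handle the right-hand side I would pass through the auxiliary function $b_\OO\circ\gamma$, which is AC since $b_\OO=d_\OO-d_{\OO^c}$ is $2$-Lipschitz; using $\partial\OO\subset\Sigma_{\rho_0}$ together with continuity of $\gamma$, the curve takes values in the open set $\Sigma_{\rho_0}$ on an open neighborhood of $C$, so the chain rule gives $(b_\OO\circ\gamma)'(t)=\langle Db_\OO(\gamma(t)),\dot\gamma(t)\rangle$ a.e.\ on $C$. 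Because $b_\OO\equiv 0$ on $\partial\OO$ we have $b_\OO\circ\gamma\equiv 0$ on $C$, so the same zero-level-set principle forces $\langle Db_\OO(\gamma(t)),\dot\gamma(t)\rangle=0$ a.e.\ on $C$, and since $\mathbf{1}_{\OO^c}(\gamma(t))=1$ there the right-hand side vanishes as well. Patching $A$, $B$, $C$ together then yields the formula globally.
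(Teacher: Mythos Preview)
Your argument is correct. The decomposition into $A$, $B$, $C$ is the natural one, and each case is handled properly: the Lipschitz-composition argument for absolute continuity, the chain rule on $B$ via $d_\OO=b_\OO$ there, and on $C$ the combination of the level-set principle (an AC function has zero derivative a.e.\ on any level set) applied to both $d_\OO\circ\gamma$ and $b_\OO\circ\gamma$.

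Note, however, that the paper does \emph{not} prove this lemma at all: it is stated with the preface ``recall a result from \cite{bib:CC}'' and no proof is given in the present paper. So there is no ``paper's own proof'' to compare against here; your write-up supplies precisely the kind of self-contained argument that the authors chose to outsource to the cited reference. In that sense your proposal is a genuine addition rather than a reproduction.
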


\begin{proposition}\label{prop:fractional_semiconcavity}
	For any $x^*\in\overline{\Omega}$, the functions $A^{\overline{\Omega},L}_t(\cdot,x^*)$ and $A^{\overline{\Omega},L}_t(x^*,\cdot)$ both are locally semiconcave  with fractional modulus. More precisely, there exists $C>0$ such that, if $h\in\R^d$ with $x\pm h\in\overline{\Omega}$, then we have
	\begin{align*}
		A^{\overline{\Omega},L}_t(x+h,x^*)+A^{\overline{\Omega},L}_t(x-h,x^*)-2A^{\overline{\Omega},L}_t(x,x^*)\leq C|h|^{\frac 32},\quad x\in\overline{\Omega}.
	\end{align*}
	In particular, for such an $h$, we also have
	\begin{align*}
		u(x+h)+u(x-h)-2u(x)\leq C|h|^{\frac 32},\quad x\in\overline{\Omega},
	\end{align*}
	where $u$ is the constrained viscosity solution defined by \cref{eq:1-1}.
\end{proposition}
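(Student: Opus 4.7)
The plan is to prove the estimate for $A^{\overline{\Omega},L}_t(\cdot,x^*)$ by explicit construction of competitor trajectories from $x\pm h$ to $x^*$; the companion estimate for $A^{\overline{\Omega},L}_t(x^*,\cdot)$ is entirely analogous, performing the same surgery at the right endpoint, and the bound on $u$ then drops out of the dynamic programming identity \cref{eq:LaxOleinik}: since $u(\cdot)=\inf_{y\in\overline{\Omega}}\{u(y)+A^{\overline{\Omega},L}_t(y,\cdot)\}+ct$, picking a near-optimal $y^*$ in the representation of $u(x)$ and using the semiconcavity of $A^{\overline{\Omega},L}_t(y^*,\cdot)$ yields $u(x+h)+u(x-h)-2u(x)\leq C|h|^{3/2}$.

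Fix $x,x^*\in\overline{\Omega}$ and $h\in\R^d$ with $x\pm h\in\overline{\Omega}$, and let $\gamma:[0,t]\to\overline{\Omega}$ be a minimizer for $A^{\overline{\Omega},L}_t(x,x^*)$, so $\gamma(0)=x$, $\gamma(t)=x^*$, and $|\dot\gamma|$ is bounded in terms of $t$, $H$ and $\OO$ by standard arguments based on \Cref{prop:equiliplemma}. Choose a small parameter $\tau\in(0,t/2]$, to be optimized, and a Lipschitz cutoff $\alpha:[0,t]\to[0,1]$ with $\alpha(0)=1$, $\alpha\equiv 0$ on $[\tau,t]$ and $|\alpha'|\leq 2/\tau$. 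Using the oriented distance $b_\Omega$, which by \cref{eq:rho_0} is $C^2$ in the tubular neighbourhood $\Sigma_{\rho_0}$, set
\begin{equation*}
	\gamma_\pm(s):=\gamma(s)\pm\alpha(s)\,h-\lambda_\pm(s)\,Db_\Omega(\gamma(s)),
\end{equation*}
where $\lambda_\pm(s)\geq 0$ is a Lipschitz regularization of the positive part $\bigl(b_\Omega(\gamma(s))\pm\alpha(s)\langle Db_\Omega(\gamma(s)),h\rangle\bigr)_+$, extended by zero away from $\Sigma_{\rho_0}$. Thanks to \Cref{lem:lem_derivative} and a first-order Taylor expansion of $b_\Omega$, this forces $b_\Omega(\gamma_\pm(s))\leq 0$, i.e. $\gamma_\pm(s)\in\overline{\Omega}$; moreover one obtains the uniform estimates $\lambda_\pm(s)=O(|h|)$ and $|\dot\lambda_\pm(s)|=O(|h|/\tau)$ on $[0,\tau]$, together with $\lambda_\pm(0)=\lambda_\pm(\tau)=0$ (the first because $x\pm h\in\overline{\Omega}$ gives $b_\Omega(x\pm h)\leq 0$, the second because $\alpha(\tau)=0$ and $b_\Omega(\gamma(\tau))\leq 0$).

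Since $\gamma_\pm(0)=x\pm h$, $\gamma_\pm(t)=x^*$ and $\gamma_\pm\equiv\gamma$ on $[\tau,t]$,
\begin{equation*}
	A^{\overline{\Omega},L}_t(x+h,x^*)+A^{\overline{\Omega},L}_t(x-h,x^*)-2A^{\overline{\Omega},L}_t(x,x^*)\leq\int_0^\tau\bigl(L(\gamma_+,\dot\gamma_+)+L(\gamma_-,\dot\gamma_-)-2L(\gamma,\dot\gamma)\bigr)ds.
\end{equation*}
A second-order Taylor expansion of $L$ around $(\gamma(s),\dot\gamma(s))$ then yields the desired bound. The first-order contributions of the translation $\pm\alpha h$ in position and $\pm\alpha' h$ in velocity cancel pairwise, while the corresponding second-order ones contribute $O(|h|^2\tau)$ (spatial part) plus $O(|h|^2/\tau)$ (velocity part, since $|\alpha'h|=O(|h|/\tau)$ on $[0,\tau]$). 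The boundary corrections $\lambda_\pm$ break the $\pm$ symmetry only through their positive parts; their first-order contribution, after an integration by parts that exploits $\lambda_\pm(0)=\lambda_\pm(\tau)=0$, is controlled by $O(|h|\tau)$, and the quadratic contribution of $\dot\lambda_\pm$ is again $O(|h|^2/\tau)$.

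Choosing $\tau=|h|^{1/2}$ balances the competing terms $|h|^2/\tau$ and $|h|\tau$ at the common value $|h|^{3/2}$, producing
\begin{equation*}
	A^{\overline{\Omega},L}_t(x+h,x^*)+A^{\overline{\Omega},L}_t(x-h,x^*)-2A^{\overline{\Omega},L}_t(x,x^*)\leq C|h|^{3/2}
\end{equation*}
with $C$ depending on $t$, $x^*$, and the data (larger values of $|h|$ are absorbed into $C$). The main obstacle is the careful design of $\lambda_\pm$: it must keep both curves inside $\overline{\Omega}$, be nonnegative and Lipschitz with the right scaling in $|h|$ and $\tau$, vanish at the endpoints of $[0,\tau]$, and produce a first-order action contribution no worse than $O(|h|\tau)$. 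It is precisely this boundary correction, which has no analogue in the unconstrained case, that degrades the classical $|h|^2$ modulus of semiconcavity to the fractional $|h|^{3/2}$ rate.
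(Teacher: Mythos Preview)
Your overall strategy matches the paper's: perturb an optimal $\gamma$ near the initial point by $\pm\alpha(s)h$, correct normally to stay in $\overline\Omega$, Taylor-expand $L$, and balance with $\tau=|h|^{1/2}$ so that the unavoidable $O(|h|\tau)$ boundary term and the $O(|h|^2/\tau)$ velocity term meet at $|h|^{3/2}$. The deduction of the estimate for $u$ from that for $A_t(x^*,\cdot)$ via \cref{eq:LaxOleinik} is exactly what the paper does.

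There is, however, a genuine gap in the construction of your normal correction $\lambda_\pm$. You set $\lambda_\pm(s)$ to be (a regularization of) $\bigl(b_\Omega(\gamma(s))\pm\alpha(s)\langle Db_\Omega(\gamma(s)),h\rangle\bigr)_+$ and then assert both $\gamma_\pm([0,\tau])\subset\overline\Omega$ and $\lambda_\pm(0)=0$. Neither follows. The expression inside the positive part is only the first-order Taylor polynomial of $b_\Omega(\gamma(s)\pm\alpha(s)h)$; it differs from the true value by $O(|h|^2)$, so subtracting $\lambda_\pm(s)Db_\Omega(\gamma(s))$ only yields $b_\Omega(\gamma_\pm(s))=O(|h|^2)$, which may be positive. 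Likewise, $x\pm h\in\overline\Omega$ gives $b_\Omega(x\pm h)\le 0$, but your $\lambda_\pm(0)=\bigl(b_\Omega(x)\pm\langle Db_\Omega(x),h\rangle\bigr)_+$ can be strictly positive (again an $O(|h|^2)$ discrepancy), in which case $\gamma_\pm(0)\neq x\pm h$ and your curves are not admissible competitors. You flag ``the careful design of $\lambda_\pm$'' as the main obstacle, and indeed the specific design you propose does not meet the requirements you list for it.

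The paper sidesteps this by using the \emph{exact} metric projection: it first sets $\gamma_\pm(s)=\gamma(s)\pm(1-s/r)_+h$ and then takes $\widehat\gamma_\pm(s)=\gamma_\pm(s)-d_\Omega(\gamma_\pm(s))\,Db_\Omega(\gamma_\pm(s))$, with $Db_\Omega$ evaluated at the perturbed point. This guarantees $\widehat\gamma_\pm\subset\overline\Omega$ and $\widehat\gamma_\pm(0)=x\pm h$ automatically. The price is that one must estimate $\int_0^r|\dot{\widehat\gamma}_\pm-\dot\gamma_\pm|^2\,ds$; the delicate term is $\int_0^r\langle Db_\Omega(\gamma_\pm),\dot\gamma_\pm\rangle^2\mathbf{1}_{\Omega^c}(\gamma_\pm)\,ds$, which the paper bounds by $Cr|h|$ via integration by parts on each excursion interval of $\gamma_\pm$ outside $\overline\Omega$. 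That step requires $\gamma\in C^{1,1}([0,t];\overline\Omega)$ (so that $\frac{d^2}{ds^2}d_\Omega(\gamma_\pm)$ is bounded), a regularity fact for constrained minimizers which you also need for your integration-by-parts argument but do not invoke---boundedness of $|\dot\gamma|$ alone is not enough.
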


\begin{proof}
Here we only study the semi-concavity of the function $A^{\overline{\Omega},L}_t(\cdot,x^*)$ for any $x^*\in\overline{\Omega}$, since $A^{\overline{\Omega},L}_t(x^*,\cdot)=A^{\overline{\Omega},\breve{L}}_t(\cdot,x^*)$, where $\breve{L}(x,v)=L(x,-v)$. We divide the proof into several steps.

\noindent\textbf{I. Projection method.} Fix $x,x^*\in\overline{\Omega}$, $h\in\R^d$ such that $x\pm h\in\overline{\Omega}$. Let $\gamma\in\Gamma^t_{x,x^*}(\overline{\Omega})$ be a minimizer for $A_t(x,x^*)$, and $\varepsilon>0$. For $r\in(0,\varepsilon/2]$, define
\begin{align*}
	\gamma_{\pm}(s)=\gamma(s)\pm\left(1-\frac sr\right)_+h,\quad s\in[0,t].
\end{align*}
Recalling \eqref{eq:rho_0}, if $|h|\ll1$, then $d_{\Omega}(\gamma_{\pm}(s))\leq\rho_0$ for $s\in[0,r]$, since
\begin{align*}
	d_{\Omega}(\gamma_{\pm}(s))\leq|\gamma_{\pm}(s)-\gamma(s)|\leq\left|\left(1-\frac sr\right)_+h\right|\leq|h|.
\end{align*}
This implies $d_{\Omega}(\gamma_{\pm}(s))\leq\rho_0$ for all $s\in[0,t]$. Denote by $\widehat{\gamma}_{\pm}$ the projection of $\gamma_{\pm}$ onto $\overline{\Omega}$, that is
\begin{align*}
	\widehat{\gamma}_{\pm}(s)=\gamma_{\pm}(s)-d_{\Omega}(\gamma_{\pm}(s))Db_{\Omega}(\gamma_{\pm}(s)),\quad s\in[0,t].
\end{align*}
From our construction of $\widehat{\gamma}_{\pm}$, it is easy to see that $\widehat{\gamma}_{\pm}(0)=x{\pm}h$ and for all $s\in[0,t]$,
\begin{equation}\label{eq:diff_projection_1}
	\begin{split}
		|\widehat{\gamma}_{\pm}(s)-\gamma(s)|=&\,|\gamma_{\pm}(s)-d_{\Omega}(\gamma_{\pm}(s))Db_{\Omega}(\gamma_{\pm}(s))-\gamma(s)|\\
		\leq &\,|h|+d_{\Omega}(\gamma_{\pm}(s))\leq 2|h|.
	\end{split}
\end{equation}
Moreover, in view of \Cref{lem:lem_derivative}, we conclude that for almost all $s\in[0,r]$
\begin{equation}
	\begin{split}
		\dot{\widehat{\gamma}}_{\pm}(s)=&\,\dot{\gamma}_{\pm}(s)-\left\langle Db_{\Omega}(\gamma_{\pm}(s)),\dot{\gamma}_{\pm}(s)\right\rangle Db_{\Omega}(\gamma_{\pm}(s))\mathbf{1}_{\Omega^c}(\gamma_{\pm}(s))\\
	&\,\quad -d_{\Omega}(\gamma_{\pm}(s))D^2b_{\Omega}(\gamma_{\pm}(s))\dot{\gamma}_{\pm}(s).
	\end{split}
\end{equation}

To proceed with the proof we need estimates for $\int^r_0|\widehat{\gamma}_{+}(s)-\widehat{\gamma}_{-}(s)|^2\ ds$ and $\int^r_0|\dot{\widehat{\gamma}}_{+}(s)-\dot{\widehat{\gamma}}_{-}(s)|^2\ ds$. The first one is easily obtained by \cref{eq:diff_projection_1}. That is
\begin{equation}
	\begin{split}
		\int^r_0|\widehat{\gamma}_{+}(s)-\widehat{\gamma}_{-}(s)|^2\ ds\leq &\,\int^r_0(|\widehat{\gamma}_{+}(s)-\gamma(s)|+|\widehat{\gamma}_{-}(s)-\gamma(s)|)^2\ ds\\
	\leq &\,16r|h|^2.
	\end{split}
\end{equation}
To obtain the second estimate, notice that
\begin{align*}
	&\,\int^r_0|\dot{\widehat{\gamma}}_{+}(s)-\dot{\widehat{\gamma}}_{-}(s)|^2\ ds\\
	\leq &\,\int^r_0(|\dot{\widehat{\gamma}}_{+}(s)-\dot{\gamma}_+(s)|+|\dot{\gamma}_+(s)-\dot{\gamma}_-(s)|+|\dot{\widehat{\gamma}}_{-}(s)-\dot{\gamma}_-(s)|)^2\ ds,
\end{align*}
and
\begin{align*}
	\int^r_0|\dot{\gamma}_+(s)-\dot{\gamma}_-(s)|^2\ ds\leq\frac{4|h|^2}r.
\end{align*}
It is enough to give the estimate of
\begin{align*}
	\int^r_0|\dot{\widehat{\gamma}}_{\pm}(s)-\dot{\gamma}_{\pm}(s)|^2\ ds.
\end{align*}

\noindent\textbf{II. Estimate of $\int^r_0|\dot{\widehat{\gamma}}_{\pm}(s)-\dot{\gamma}_{\pm}(s)|^2\ ds$.} We will only give the estimate for $\int^r_0|\dot{\widehat{\gamma}}_{+}(s)-\dot{\gamma}_{+}(s)|^2\ ds$ since the other is similar. Recalling \Cref{lem:lem_derivative} we conclude that for $s\in[0,r]$,
\begin{align*}
	\dot{\widehat{\gamma}}_{+}(s)-\dot{\gamma}_{+}(s)=&\,-\frac d{ds}\left\{d_{\Omega}(\gamma_{+}(s))Db_{\Omega}(\gamma_{+}(s))\right\}\\
	=&\,-\langle Db_{\Omega}(\gamma_+(s)),\dot{\gamma}_+(s)\rangle Db_{\Omega}(\gamma_{+}(s))\mathbf{1}_{\Omega^c}(\gamma_+(s))\\
	&\,-d_{\Omega}(\gamma_{+}(s))D^2b_{\Omega}(\gamma_{+}(s))\dot{\gamma}_+(s).
\end{align*}
In view of the fact that $\langle D^2b_{\Omega}(x),Db_{\Omega}(x)\rangle=0$ for all $x\in\Sigma_{\rho_0}$, we have that
\begin{align*}
	&\,\int^r_0|\dot{\widehat{\gamma}}_{+}(s)-\dot{\gamma}_{+}(s)|^2\ ds\\
	\leq &\,\int^r_0\langle Db_{\Omega}(\gamma_+(s)),\dot{\gamma}_+(s)\rangle^2\mathbf{1}_{\Omega^c}(\gamma_+(s))\ ds+\int^r_0[d_{\Omega}(\gamma_{+}(s))D^2b_{\Omega}(\gamma_{+}(s))\dot{\gamma}_+(s)]^2\ ds\\
	=&\,I_1+I_2.
\end{align*}
Recall $\gamma\in C^{1,1}([0,T,\overline{\Omega}])$, then $|\dot{\gamma}(s)|$ is uniformly bounded by a constant independent of $h$ and $r$. It follows there exists $C_1>0$ such that
\begin{align*}
	I_2\leq\int^r_0|h|^2\cdot|D^2b_{\Omega}(\gamma_{+}(s))|^2\cdot\left|\dot{\gamma}(s)-\frac hr\right|^2\ ds\leq C_1r|h|^2\left(1+\frac {|h|^2}{r^2}+\frac{|h|}{r}\right).
\end{align*}
In view of \Cref{lem:lem_derivative}, we obtain that
\begin{align*}
	I_1=&\int^r_0\left\{\frac d{ds}(d_{\Omega}(\gamma_+(s)))\langle Db_{\Omega}(\gamma_+(s)),\dot{\gamma}_+(s)\rangle\right\}\mathbf{1}_{\Omega^c}(\gamma_+(s))\ ds.
\end{align*}
We observe that the set $\{s\in[0,r]: \gamma_+(s)\in\overline{\Omega}^c\}$ is open and it is composed of countable union of disjoint open intervals $(a_i,b_i)$. Thus
\begin{align*}
	I_1=\sum_{i=1}^{\infty}\int^{b_i}_{a_i}\left\{\frac d{ds}(d_{\Omega}(\gamma_+(s)))\langle Db_{\Omega}(\gamma_+(s)),\dot{\gamma}_+(s)\rangle\right\}\ ds.
\end{align*}
Integrating by parts we conclude that
\begin{align*}
	I_1=\sum_{i=1}^{\infty}\left\{\frac d{ds}(d_{\Omega}(\gamma_+(s)))d_{\Omega}(\gamma_+(s))\bigg\vert_{a_i}^{b_i}-\int^{b_i}_{a_i}\frac {d^2}{ds^2}(d_{\Omega}(\gamma_+(s)))d_{\Omega}(\gamma_+(s))\ ds\right\}.
\end{align*}
Notice $\gamma_+(a_i),\gamma_+(b_i)\in\partial\Omega$ for all $i\in\N$, so $d_{\Omega}(\gamma_+(a_i))=d_{\Omega}(\gamma_+(b_i))=0$. Moreover, there exists $C_2>0$ such that
\begin{align*}
	\left|\frac {d^2}{ds^2}(d_{\Omega}(\gamma_+(s)))\right|=\left|\frac d{ds}\left\langle Db_{\Omega}(\gamma_+(s)),\dot{\gamma}(s)-\frac hr\right\rangle\right|\leq C_2
\end{align*}
also since $\gamma\in C^{1,1}([0,T,\overline{\Omega}])$. Therefore
\begin{align*}
	I_1\leq C_2r|h|.
\end{align*}
Combing the two estimates on $I_1$ and $I_2$, we have that
\begin{align*}
	\int^r_0|\dot{\widehat{\gamma}}_{+}(s)-\dot{\gamma}_{+}(s)|^2\ ds\leq C_3(r|h|+r|h|^2+\frac{|h|^4}{r}+|h|^3).
\end{align*}

\noindent\textbf{III. Fractional semiconcavity estimate of $A_t(x,x^*)$.} From the previous estimates we have that
\begin{align*}
	&\,\int^r_0|\dot{\widehat{\gamma}}_{+}(s)-\dot{\widehat{\gamma}}_{-}(s)|^2\ ds\\
	\leq &\,3\int^r_0|\dot{\widehat{\gamma}}_{+}(s)-\dot{\gamma}_+(s)|^2+|\dot{\gamma}_+(s)-\dot{\gamma}_-(s)|^2+|\dot{\widehat{\gamma}}_{-}(s)-\dot{\gamma}_-(s)|^2\ ds\\
	\leq &\, C_4(r|h|+r|h|^2+\frac{|h|^4}{r}+|h|^3+\frac{|h|^2}{r}).
\end{align*}

Now, let $\gamma\in\Gamma^t_{x,x^*}(\overline{\Omega})$ be a minimizer for $A_t(x,x^*)$. Thus,
\begin{align*}
	&\,A_t(x+h,x^*)+A_t(x-h,x^*)-2A_t(x,x^*)\\
	\leq &\,\int^r_0\left\{L(\widehat{\gamma}_+(s),\dot{\widehat{\gamma}}_+(s))+L(\widehat{\gamma}_-(s),\dot{\widehat{\gamma}}_-(s))-2L(\gamma(s),\dot{\gamma}(s))\right\}ds\\
	\leq &\,C_5\int^r_0(|\widehat{\gamma}_+(s)-\gamma(s)|^2+|\widehat{\gamma}_-(s)-\gamma(s)|^2+|\dot{\widehat{\gamma}}_+(s)-\dot{\gamma}(s)|^2+|\dot{\widehat{\gamma}}_-(s)-\dot{\gamma}(s)|^2)\ ds.
\end{align*}
Owing to \cref{eq:diff_projection_1}, we have
\begin{align*}
	\int^r_0|\widehat{\gamma}_{\pm}(s)-\gamma(s)|^2\ ds\leq 4r|h|^2.
\end{align*}
On the other hand
\begin{align*}
	&\,\int^r_0|\dot{\widehat{\gamma}}_+(s)-\dot{\gamma}(s)|^2+|\dot{\widehat{\gamma}}_-(s)-\dot{\gamma}(s)|^2\ ds\\
	\leq &\,2\int^r_0|\dot{\widehat{\gamma}}_+(s)-\dot{\gamma}_+(s)|^2+|\dot{\widehat{\gamma}}_-(s)-\dot{\gamma}_-(s)|^2\ ds+C_6\frac{|h|^2}{r}\\
	\leq &\,C_7(r|h|+r|h|^2+\frac{|h|^4}{r}+|h|^3+\frac{|h|^2}{r}).
\end{align*}
Therefore, taking $r=|h|^{\frac 12}$,
\begin{align*}
	&\,A_t(x+h,x^*)+A_t(x-h,x^*)-2A_t(x,x^*)\\
	\leq &\,C_8(r|h|+r|h|^2+\frac{|h|^4}{r}+|h|^3+\frac{|h|^2}{r})\\
	\leq &\,C_9|h|^{\frac 32}.
\end{align*}

\noindent\textbf{IV. Fractional semiconcavity estimate for $u$ defined in \cref{eq:LaxOleinik}.} By using the fundamental solution and fix $t=1$, we have
\begin{align*}
	u(x)=\inf_{y\in\overline{\Omega}}\{u(y)+A_1(y,x)\}+c,\quad\forall x\in\overline{\Omega}.
\end{align*}
Thus the required semiconcavity estimate follow by the relation
\begin{align}\label{eq:semiconc}
	u(x+h)+u(x-h)-2u(x)\leq A_1(y^*,x+h)+A_1(y^*,x-h)-2A_1(y^*,x),
\end{align}
where the infimum above achieves at $y=y^*$.
\end{proof}

\subsection{Differentiability of constrained viscosity solutions}
Let $u$ be a constrained viscosity solution of \cref{eq:hj} on $\overline{\Omega}$.
Recall that, we call $\gamma:[t_1,t_2]\to \overline{\Omega}$ is $(u,L,c)$-calibrated, if it satisfies
\begin{equation*}
u(\gamma(t'_{2}))-u(\gamma(t'_{1})) = \int_{t'_{1}}^{t'_{2}}{L(\gamma(s), \dot\gamma(s))\ ds} + c(t'_{2}-t'_{1}),
\end{equation*}
for any $[t'_{1}, t'_{2}] \subset [t_1,t_2]$.

\begin{proposition}[Differentiability property I]\label{prop:diff}
Let $u$ be a constrained viscosity solution of \cref{eq:hj} on $\overline{\Omega}$. Let $x \in \OO$ be such that there exists a $(u,L,c)$-calibrated curve $\gamma:[-\tau,\tau] \to \OO$ such that $\gamma(0)=x$, for some $\tau > 0$. Then, $u$ is differentiable at $x$.
\end{proposition}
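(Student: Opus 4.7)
The plan is to establish differentiability by producing an element in the \emph{subdifferential} $D^-u(x)$ and combining with the \emph{superdifferential} $D^+u(x)$, which is automatically nonempty by the fractional semiconcavity proved in Proposition 3.8. Recall the standard fact that if both $D^+u(x)$ and $D^-u(x)$ are nonempty at a point, then they coincide and consist of a single vector, which is the ordinary gradient of $u$ at that point. So the whole task reduces to exhibiting one subgradient of $u$ at $x$, built from the calibrated curve.

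The natural candidate is the dual momentum $p^\ast := D_vL(x, \dot\gamma(0))$. This makes sense because a $(u, L, c)$-calibrated curve whose image lies in the open set $\OO$ is an interior minimizer of the action, hence satisfies the classical Euler-Lagrange equation and is $C^{1,1}$ by Tonelli regularity; in particular $\dot\gamma(0)$ is well defined. To show $p^\ast \in D^-u(x)$, I will run a needle variation. Fix $t \in (0,\tau]$ small and, for $y$ close to $x$, set
\begin{equation*}
\beta_y(s) := \gamma(s) + \left(1 - \tfrac{s}{t}\right)(y - x), \qquad s \in [0,t],
\end{equation*}
so that $\beta_y(0) = y$ and $\beta_y(t) = \gamma(t)$. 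Since $\gamma([0,\tau]) \subset \OO$ is compactly contained in the open set $\OO$, for $|y-x|$ sufficiently small $\beta_y$ still takes values in $\OO \subset \OOO$, hence $\beta_y \in \C(\gamma(t);t)$. Combining the Lax-Oleinik inequality from \Cref{prop:for} applied at the point $\gamma(t)$ with the calibration identity $u(\gamma(t)) = u(x) + \int_0^t L(\gamma,\dot\gamma)\,ds + ct$ yields
\begin{equation*}
u(y) - u(x) \,\geq\, \int_0^t \bigl[L(\gamma(s),\dot\gamma(s)) - L(\beta_y(s),\dot\beta_y(s))\bigr]\,ds.
\end{equation*}
A first-order Taylor expansion of $L$ around $(\gamma(s),\dot\gamma(s))$, together with $\beta_y - \gamma = (1-s/t)(y-x)$ and $\dot\beta_y - \dot\gamma = -(y-x)/t$ and the uniform bounds on the second derivatives of $L$ along the compact piece of trajectory, gives
\begin{equation*}
u(y) - u(x) \,\geq\, \langle \bar p_t,\, y - x \rangle - C_1 t\,|y-x| - C_2\bigl(t + t^{-1}\bigr)|y-x|^2,
\end{equation*}
where $\bar p_t := \tfrac{1}{t}\int_0^t D_vL(\gamma(s),\dot\gamma(s))\,ds$. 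Dividing by $|y-x|$ and taking $\liminf_{y\to x}$ (with $t$ fixed) gives $\liminf_{y\to x}\bigl(u(y)-u(x)-\langle \bar p_t, y-x\rangle\bigr)/|y-x| \geq -C_1 t$; since $\bar p_t \to p^\ast$ as $t \to 0^+$ by continuity of $D_vL$ along $\gamma$, the corresponding $\liminf$ with $p^\ast$ in place of $\bar p_t$ is nonnegative. Hence $p^\ast \in D^-u(x)$, and together with $D^+u(x)\neq\emptyset$ this yields differentiability with $Du(x)=p^\ast$.

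The only subtle point is the quantitative trade-off in the above inequality: the error $t^{-1}|y-x|^2$ blows up as $t \to 0$, so one cannot let $t \to 0$ \emph{inside} the inequality before letting $y \to x$. The workaround is precisely the two-scale limit above, sending $y \to x$ first with $t$ fixed and only then letting $t \to 0$. The other technical ingredient, namely that the needle variation $\beta_y$ stays an admissible competitor, is exactly where the hypotheses $x \in \OO$ and $\gamma([-\tau,\tau])\subset\OO$ are used: if $x$ were on $\partial\OO$, the perturbed curves would generically exit $\OOO$ and the bound from \Cref{prop:for} would no longer be available. This is why the proposition is stated only for interior calibration; the boundary case would require a tangential variation plus a control of the Lagrange multiplier, which is left to the sequel.
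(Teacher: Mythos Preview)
Your argument is correct and follows the standard route the paper points to by citing Fathi's Theorem 4.11.5: a needle variation along the calibrated curve that identifies $p^\ast=D_vL(x,\dot\gamma(0))$ as a one-sided gradient. The only real difference is how you obtain the \emph{other} side. The paper's intended argument (visible in the Appendix, where the boundary analogue \Cref{prop:tangentialdiff} is proved) uses the backward piece $\gamma|_{[-\tau,0]}$ with an analogous variation to show directly that $p^\ast\in D^+u(x)$ as well, i.e.\ it establishes both inequalities $\limsup\le\langle p^\ast,y\rangle\le\liminf$ for the difference quotient by symmetry, taking the variation scale equal to the displacement ($\eps=\lambda_i$ in the Appendix). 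You instead invoke the fractional semiconcavity of \Cref{prop:fractional_semiconcavity} to get $D^+u(x)\neq\emptyset$ for free and then only need the forward calibration for the subgradient. Both are standard; the Fathi variant is self-contained (it does not need \Cref{prop:fractional_semiconcavity} and already pins down $Du(x)=p^\ast$), while yours trades the backward variation for an appeal to a regularity result you had anyway. Your two-scale limit ($y\to x$ first, then $t\to0$) is a clean way to handle the $t^{-1}|y-x|^2$ error and is equivalent to the single-scale choice made in the Appendix.
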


%%%%%%%%%%%%%%%%%%%

\begin{proposition}[Differentiability property II]\label{prop:tangentialdiff}
Let $u$ be a constrained viscosity solution of \cref{eq:hj} on $\overline{\Omega}$. Let $x \in \partial\OO$ be such that there exists a $(u,L,c)$-calibrated curve $\gamma:[-\tau,\tau] \to \OO$ such that $\gamma(0)=x$, for some $\tau > 0$. For any direction $y$ tangential to $\Omega$ at $x$, the directional derivative of $u$ at $x$ in the direction $y$ exists.
\end{proposition}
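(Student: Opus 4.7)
The strategy mirrors that of \Cref{prop:diff}, with the extra care that perturbations of $x$ must remain in $\OOO$. Since $\partial\OO\in C^2$ and $y$ is tangent to $\partial\OO$ at $x$, any $C^1$ curve in $\OOO$ starting at $x$ with velocity $y$ agrees with the straight line $x+sy$ up to an $O(s^2)$ correction, which the $|h|^{3/2}$ modulus of \Cref{prop:fractional_semiconcavity} comfortably absorbs.

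Set $v_0=\dot\gamma(0)$ and $p_0=D_vL(x,v_0)$, and introduce the auxiliary functions
\begin{equation*}
\phi(z):=A_\tau(\gamma(-\tau),z)+u(\gamma(-\tau))+c\tau,\qquad \psi(z):=u(\gamma(\tau))-A_\tau(z,\gamma(\tau))-c\tau,\quad z\in\OOO.
\end{equation*}
Using the calibration identities $u(x)-u(\gamma(-\tau))=\int_{-\tau}^0 L(\gamma,\dot\gamma)\,ds+c\tau$ and $u(\gamma(\tau))-u(x)=\int_0^\tau L(\gamma,\dot\gamma)\,ds+c\tau$ together with the Lax-Oleinik representation formula~\cref{eq:LaxOleinik}, I would verify the sandwich $\psi(z)\leq u(z)\leq\phi(z)$ for $z\in\OOO$, with equality at $z=x$. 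By \Cref{prop:fractional_semiconcavity} both $\phi$ and $-\psi$ are semiconcave on $\OOO$ with modulus $|h|^{3/2}$; in particular $D^+\phi(x)$ and $D^-\psi(x)$ are nonempty.

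The heart of the argument is to identify the tangential components of these sub/super\-differentials: I would prove that for every $p\in D^+\phi(x)$ and every $q\in D^-\psi(x)$,
\begin{equation*}
\langle p,y\rangle=\langle q,y\rangle=\langle p_0,y\rangle \quad \text{whenever } y \text{ is tangent to } \partial\OO \text{ at } x.
\end{equation*}
This is the step I expect to be the main obstacle. My plan is to perform a first-order perturbation analysis of the calibrated minimizer: by comparing the action of $\gamma|_{[-\tau,0]}$ with that of a tangential perturbation---realized, as in the proof of \Cref{prop:fractional_semiconcavity}, by translating $\gamma$ through $sy$ on its final portion and projecting onto $\OOO$ via the oriented boundary distance $b_\OO$---one gets $\langle p,y\rangle=\langle p_0,y\rangle$; the symmetric argument applied to $\gamma|_{[0,\tau]}$ handles $\psi$. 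The forward calibration identity $u(\gamma(t))-u(x)=\int_0^t L(\gamma,\dot\gamma)\,ds+ct$, which in the limit $t\to 0^+$ forces $\langle p_0,v_0\rangle=L(x,v_0)+c$, provides a useful consistency check.

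Granting the tangential identification, the existence of the directional derivative follows easily: pick any $C^1$ curve $\sigma:[0,s_0]\to\OOO$ with $\sigma(0)=x$ and $\dot\sigma(0)=y$, whose existence is ensured by the $C^2$ regularity of $\partial\OO$. Then $\sigma(s)-x=sy+O(s^2)$, and combining the sandwich $\psi\leq u\leq\phi$ with the semiconcavity estimates and the tangential identification above produces
\begin{equation*}
s\langle p_0,y\rangle -o(s)\leq \psi(\sigma(s))-\psi(x)\leq u(\sigma(s))-u(x)\leq \phi(\sigma(s))-\phi(x)\leq s\langle p_0,y\rangle +o(s)
\end{equation*}
as $s\to 0^+$, which yields the existence of the directional derivative of $u$ at $x$ in the direction $y$, with value $\langle p_0,y\rangle$.
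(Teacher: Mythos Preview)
Your approach is correct and shares its central technical ingredient with the paper's proof: the projection method applied to a tangential perturbation of the calibrated curve. The difference is one of packaging. The paper works with $u$ directly: it takes any admissible sequence $x+\lambda_i y_i\in\OOO$ with $y_i\to y$, $\lambda_i\to0$, builds perturbed curves $\gamma_i(s)=\gamma(s)+\frac{s+\eps}{\eps}\lambda_i y_i$ on $[-\eps,0]$ with $\eps=\lambda_i$, projects them onto $\OOO$ via $b_\OO$, and compares the resulting action against the calibration identity to get $\limsup\leq\langle p_0,y\rangle$; the symmetric computation on $[0,\eps]$ gives $\liminf\geq\langle p_0,y\rangle$. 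Your route is the classical weak-KAM sandwich: you trap $u$ between the semiconcave $\phi$ and the semiconvex $\psi$ built from the fundamental solution, and the very same perturbation--projection computation (which you correctly flag as the heart of the matter) identifies their tangential derivatives at $x$ as $\langle p_0,y\rangle$; the sandwich then transfers this to $u$. Your version is more modular (it reuses \Cref{prop:fractional_semiconcavity} as a black box, and the semiconcavity of $\phi$ lets you turn a one-sided perturbation bound into a two-sided derivative), while the paper's is leaner (no auxiliary functions, and the time scale $\eps=\lambda_i$ simplifies the error terms). One small remark: your final step computes the limit along a fixed $C^1$ curve $\sigma$, whereas the paper's formulation covers arbitrary admissible sequences; since your sandwich bounds hold for any such sequence, the extension is immediate.
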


We omit the proof of \Cref{prop:diff} here since it follows by standard arguments, as for the case without the constraint $\OO$, and for this we refer to \cite[Theorem 4.11.5]{bib:FA}. Moreover, we prove \Cref{prop:tangentialdiff} in \Cref{sec:appendix}, for the reader convenience,  since it is given by a combination of the arguments in \cite[Theorem 4.11.5]{bib:FA} and the so-called projection method.

%%%%%%%%%%%%%%%%%
Let $x \in \partial\OO$ be such that there exists a $(u,L,c)$-calibrated curve $\gamma:[-\tau,\tau]\to \overline{\Omega}$ with $\gamma(0)=x$ for some $\tau>0$. From \Cref{prop:tangentialdiff}, for any $y$ tangential to $\Omega$ at $x$, we have
\[
\langle D_{v}L(x, \dot{\gamma}(0)),y\rangle=\frac{\partial u}{\partial y}(x).
\]
Thus, one can define the tangential gradient of $u$ at $x$ by
\[
D^{\tau}u(x)=D_{v}L(x, \dot{\gamma}(0))-\langle D_{v}L(x, \dot{\gamma}(0)),\nu(x)
\rangle\nu(x).
\]

Given $x \in \partial\OO$, each $p \in D^{+}u(x)$ can be written as
\[
p=p^{\tau}+p^{\nu},
\]
where $p^{\nu}=\langle p, \nu(x) \rangle \nu(x)$, and $p^{\tau}$ is the tangential component of $p$, i.e.,  $\langle p^{\tau}, \nu(x) \rangle=0$.

By similar arguments to the one in \cite[Proposition 2.5, Proposition 4.3 and Theorem 4.3]{bib:CCC2} and by \Cref{prop:tangentialdiff} it is easy to prove the following result: \Cref{prop:lambda}, \Cref{cor:DD} and \Cref{prop:hamiltonianlemma}.

\begin{proposition}\label{prop:lambda}
Let $x \in \partial\OO$ and $u:\OOO \to \R$ be a Lipschitz continuous and semiconcave function. Then,
\begin{equation*}
-\partial^{+}_{-\nu}u(x)=\lambda_{+}(x):=\max\{\lambda_{p}(x): p \in D^{+}u(x)\},
\end{equation*}
where
\begin{equation*}
\lambda_{p}(x)=\max\{\lambda: p^{\tau}+\lambda\nu(x) \in D^{+}u(x)\},\quad \forall p\in D^+u(x)
\end{equation*}	
and
\begin{equation*}
\partial^{+}_{-\nu}u(x)=\lim_{\substack{h \to 0^{+} \\ \theta \to -\nu \\ x+h\theta \in \overline{\Omega}}} \frac{u(x+h\theta)- u(x)}{h}	
\end{equation*}
denotes the one-sided derivative of $u$ at $x$ in direction $-\nu$. 
\end{proposition}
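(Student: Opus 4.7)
The plan is to reduce the statement to two independent pieces: (a) prove that the one-sided derivative $\partial^{+}_{-\nu}u(x)$ exists and equals a min-max over $D^{+}u(x)$, and (b) match the resulting formula to $\lambda_{+}(x)$ by a purely algebraic manipulation exploiting the decomposition $p=p^{\tau}+\langle p,\nu(x)\rangle\nu(x)$.

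For step (a), I would first observe that, since $\partial\OO$ is of class $C^{2}$, the inward unit normal $-\nu(x)$ is tangentially admissible, so for every direction $\theta$ in a small cone around $-\nu(x)$ and every sufficiently small $h>0$, the point $x+h\theta$ lies in $\OOO$; hence the difference quotient is well-defined along the admissible approach. Since $u$ is Lipschitz and semiconcave on $\OOO$, its superdifferential $D^{+}u(x)$ is a nonempty compact convex set and the classical formula
\begin{equation*}
u'(x;v)=\lim_{h\to 0^{+}}\frac{u(x+hv)-u(x)}{h}=\min_{p\in D^{+}u(x)}\langle p,v\rangle
\end{equation*}
holds for every admissible direction $v$. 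Because the map $v\mapsto\min_{p\in D^{+}u(x)}\langle p,v\rangle$ is concave and positively homogeneous, it is continuous in $v$, and combining this continuity with the semiconcavity estimate (which controls the difference quotient uniformly in $h$ on compact sets of directions) one obtains that the limit in the definition of $\partial^{+}_{-\nu}u(x)$ exists and equals $\min_{p\in D^{+}u(x)}\langle p,-\nu(x)\rangle$. Therefore
\begin{equation*}
-\partial^{+}_{-\nu}u(x)=\max_{p\in D^{+}u(x)}\langle p,\nu(x)\rangle.
\end{equation*}

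For step (b), I would show that the right-hand side above coincides with $\lambda_{+}(x)$. Given any $p\in D^{+}u(x)$, decompose $p=p^{\tau}+\langle p,\nu(x)\rangle\nu(x)$; since $p\in D^{+}u(x)$, the value $\lambda=\langle p,\nu(x)\rangle$ is admissible in the set defining $\lambda_{p}(x)$, so $\langle p,\nu(x)\rangle\leq\lambda_{p}(x)\leq\lambda_{+}(x)$, which gives the inequality $\max_{p\in D^{+}u(x)}\langle p,\nu(x)\rangle\leq\lambda_{+}(x)$. For the reverse inequality, note that $D^{+}u(x)$ is compact, so the supremum in the definition of $\lambda_{p}(x)$ is attained at some $q:=p^{\tau}+\lambda_{p}(x)\nu(x)\in D^{+}u(x)$; since $\langle p^{\tau},\nu(x)\rangle=0$, we get $\langle q,\nu(x)\rangle=\lambda_{p}(x)$, hence $\lambda_{p}(x)\leq\max_{q\in D^{+}u(x)}\langle q,\nu(x)\rangle$, and taking the maximum over $p$ yields the opposite inequality. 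Combining (a) and (b) closes the argument.

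The main subtlety is step (a): justifying that the joint limit in $(h,\theta)$, with $\theta\to -\nu(x)$ along admissible directions, yields the same value as the axial one-sided derivative. I expect this to follow by a standard uniform continuity argument on the superdifferential representation of $u'(x;\cdot)$ — the semiconcavity inequality provides a one-sided Lipschitz bound for difference quotients that is uniform in the direction, so the iterated and simultaneous limits agree. The remainder of the proof is essentially book-keeping with the decomposition into tangential and normal components, so no further obstacles are anticipated.
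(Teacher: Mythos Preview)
The paper does not give a self-contained proof of this proposition; it simply refers to \cite[Proposition~2.5]{bib:CCC2} and related results there. Your outline is correct and is essentially the standard argument behind that reference: step~(a) is the directional-derivative formula for semiconcave functions on closed sets (Cannarsa--Sinestrari, Theorem~3.3.6), and step~(b) is the obvious identification of $\max_{p\in D^{+}u(x)}\langle p,\nu(x)\rangle$ with $\lambda_{+}(x)$.

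One point to correct: at a boundary point $x\in\partial\Omega$ the superdifferential $D^{+}u(x)$, taken relative to $\overline\Omega$, is \emph{not} compact. Indeed, if $p\in D^{+}u(x)$ then $p-t\nu(x)\in D^{+}u(x)$ for every $t\geq 0$, because admissible increments $y-x$ satisfy $\langle\nu(x),y-x\rangle\leq O(|y-x|^{2})$, so subtracting $t\nu(x)$ only improves the superdifferential inequality. Thus $D^{+}u(x)$ is a closed convex set that is a half-cylinder in the $-\nu(x)$ direction. This does not affect your argument: the Lipschitz bound on $u$ forces $\langle p,\nu(x)\rangle$ to be bounded above on $D^{+}u(x)$, and closedness then guarantees that both $\max_{p\in D^{+}u(x)}\langle p,\nu(x)\rangle$ and each $\lambda_{p}(x)$ are attained. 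With this adjustment, both directions in your step~(b) go through verbatim. Your handling of the joint limit in~(a) is also fine, since $-\nu(x)$ lies in the interior of the tangent cone to $\overline\Omega$ at $x$, the map $v\mapsto \min_{p\in D^{+}u(x)}\langle p,v\rangle$ is concave (hence continuous) on that cone, and the semiconcavity estimate controls the difference quotients uniformly in the direction.
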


\begin{corollary}\label{cor:DD}
Let $u$ be a constrained viscosity solution of \cref{eq:hj} on $\overline{\Omega}$. Let $x \in \partial\OO$ be a point such that there is a $(u,L,c)$-calibrated curve $\gamma:[-\tau,\tau] \to \OO$ such that $\gamma(0)=x$, for some $\tau > 0$. Then, all $p\in D^+u(x)$ have the same tangential component, i.e.,
\begin{equation*}
\{ p^{\tau} \in \R^{d}: p \in D^{+}u(x)\}=\{ D^{\tau}u(x)\},
\end{equation*}
and
\begin{equation*}
D^{+}u(x)=\big\{ p \in \R^{d}: p=D^{\tau}u(x)+\lambda \nu(x),\ \forall\ \lambda \in (-\infty, \lambda_{+}(x)]\big\}.
\end{equation*}
\end{corollary}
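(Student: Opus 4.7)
The plan is to combine the two-sided tangential differentiability from \Cref{prop:tangentialdiff} with the standard description of the superdifferential of a (fractionally) semiconcave function and with the normal maximality formula of \Cref{prop:lambda}. The argument is structurally the same as \cite[Theorem 4.3]{bib:CCC2}, to which the paper refers; the new ingredient is that the calibration hypothesis now forces differentiability in every tangential direction at~$x$.

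First I would note that, by \Cref{prop:equiliplemma} and \Cref{prop:fractional_semiconcavity}, $u$ is Lipschitz and semiconcave on $\OOO$ with modulus $\omega(r)=Cr^{1/2}$. This yields the classical identity
\[
\partial^+_y u(x) \;=\; \min_{p\in D^+u(x)}\langle p,y\rangle
\]
for every direction $y$ admissible at $x$; tangential directions at the boundary point $x$ are handled via curves in $\OOO$ with tangent $y$, using the projection method already employed in the proof of \Cref{prop:fractional_semiconcavity}. Then \Cref{prop:tangentialdiff} asserts that $\partial_y u(x)=\langle D^\tau u(x),y\rangle$ exists as a two-sided derivative for every tangential $y$, hence $\partial^+_y u(x)=-\partial^+_{-y}u(x)$, which forces
\[
\min_{p\in D^+u(x)}\langle p,y\rangle \;=\; \max_{p\in D^+u(x)}\langle p,y\rangle \;=\; \langle D^\tau u(x),y\rangle.
\]
Letting $y$ range over the tangent space to $\partial\OO$ at $x$ shows that $p^\tau=D^\tau u(x)$ for every $p\in D^+u(x)$, which is the first displayed equality in the statement.

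Consequently every $p\in D^+u(x)$ can be written as $p=D^\tau u(x)+\lambda\nu(x)$ for some $\lambda\in\R$, with $\lambda\leq\lambda_+(x)$ by \Cref{prop:lambda}. For the reverse inclusion I would pick any $\lambda\leq\lambda_+(x)$, set $p=D^\tau u(x)+\lambda\nu(x)$ and $p_+=D^\tau u(x)+\lambda_+(x)\nu(x)\in D^+u(x)$, and use the $C^2$ Taylor expansion
\[
\langle\nu(x),y-x\rangle \;=\; b_\OO(y)+O(|y-x|^2) \;\leq\; C|y-x|^2, \qquad y\in\OOO,\ y\to x,
\]
where the inequality uses $b_\OO\leq 0$ on $\OOO$. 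Then
\begin{align*}
u(y)-u(x)-\langle p,y-x\rangle
&= \bigl[u(y)-u(x)-\langle p_+,y-x\rangle\bigr] + (\lambda_+-\lambda)\langle\nu(x),y-x\rangle \\
&\leq o(|y-x|) + C(\lambda_+-\lambda)|y-x|^2 \;=\; o(|y-x|),
\end{align*}
because $p_+\in D^+u(x)$ produces the first bracket as $o(|y-x|)$ and $(\lambda_+-\lambda)\geq 0$. Hence $p\in D^+u(x)$, and the chain of inclusions closes.

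The hard part will be the rigorous use of the formula $\partial^+_y u(x)=\min_{p\in D^+u(x)}\langle p,y\rangle$ at the boundary point $x$: for tangential $y$ the straight ray $x+ty$ may exit $\OOO$, so one has to approach $x$ along suitable curves in $\OOO$ tangent to $y$ at $s=0$. This is exactly the projection trick already used in Step~I of the proof of \Cref{prop:fractional_semiconcavity}, which replaces $x+ty$ by its projection onto $\OOO$ at the cost of an $O(t^2)$ error that does not affect first-order quantities. Once this extension of the semiconcave theory is in place, the rest of the argument is purely algebraic.
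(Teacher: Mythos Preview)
Your proposal is correct and follows precisely the route the paper indicates: the paper does not give a proof but states that \Cref{cor:DD} follows by the arguments of \cite[Proposition~2.5, Proposition~4.3, Theorem~4.3]{bib:CCC2} combined with \Cref{prop:tangentialdiff}, and your proof is a faithful unpacking of exactly that scheme --- semiconcavity plus the one-sided derivative formula $\partial^+_y u(x)=\min_{p\in D^+u(x)}\langle p,y\rangle$, the two-sided tangential differentiability from \Cref{prop:tangentialdiff} to pin down $p^\tau$, \Cref{prop:lambda} for the upper bound on the normal component, and the $b_\OO$-expansion for the reverse inclusion. The only point to phrase more carefully is that the bracket $u(y)-u(x)-\langle p_+,y-x\rangle$ is not literally $o(|y-x|)$ but has nonpositive $\limsup$ after division by $|y-x|$, which is all you need.
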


\begin{proposition}\label{prop:hamiltonianlemma}
Let $u$ be a constrained viscosity solution of \cref{eq:hj} on $\overline{\Omega}$. Let $x \in \partial\OO$ be a point such that there is a $(u,L,c)$-calibrated curve $\gamma:[-\tau,\tau] \to \OO$ such that $\gamma(0)=x$, for some $\tau > 0$. Then,
	\begin{equation*}
	H(x, D^{\tau}u(x)+\lambda_{+}(x)\nu(x))=c.
	\end{equation*}
	\end{proposition}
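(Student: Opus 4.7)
The plan is to show $H(x,p^\star)=c$, with $p^\star:=D^\tau u(x)+\lambda_+(x)\nu(x)$, by exhibiting $p^\star$ as the limit of classical gradients $Du(x_n)$ along a sequence of differentiability points $x_n\in\OO$ with $x_n\to x$. Once this is done, the equality follows at once: $u$ is a viscosity sub- and supersolution in $\OO$, so $H(x_n,Du(x_n))=c$ at every interior differentiability point, and the continuity of $H$ transfers the equality to the limit $H(x,p^\star)=c$.

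To build the sequence, I would approach $x$ along the inward normal: for $h_n\downarrow 0$, set $y_n:=x-h_n\nu(x)$, which lies in $\OO$ for $n$ large by the $C^2$-regularity of $\partial\OO$. Since $u$ is Lipschitz and semiconcave by \Cref{prop:fractional_semiconcavity}, it is differentiable almost everywhere in $\OO$, so I may perturb each $y_n$ by some $\epsilon_n$ with $|\epsilon_n|=o(h_n)$ to reach $x_n:=y_n+\epsilon_n\in\OO$ at which $u$ is differentiable. Specialising $\theta=-\nu(x)$ in \Cref{prop:lambda} gives
\[
\lim_{h\to 0^+}\frac{u(x)-u(x-h\nu(x))}{h}=\lambda_+(x),
\]
and the Lipschitz continuity of $u$ together with $|\epsilon_n|/h_n\to 0$ upgrades this to $\frac{u(x)-u(x_n)}{h_n}\to\lambda_+(x)$. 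On the other side, the fractional semiconcavity of $u$ yields a Taylor-type estimate at the differentiability point $x_n$,
\[
u(x)\leq u(x_n)+\langle Du(x_n),x-x_n\rangle+C|x-x_n|^{3/2}.
\]
Writing $x-x_n=h_n\nu(x)-\epsilon_n$, dividing by $h_n$ and taking the liminf produce $\lambda_+(x)\leq\liminf_n\langle Du(x_n),\nu(x)\rangle$. Extracting a subsequence with $Du(x_n)\to p$ (possible by the Lipschitz bound), the upper semicontinuity of $D^+u$, which itself follows from fractional semiconcavity by passing the superdifferential inequality to the limit, gives $p\in D^+u(x)$. By \Cref{cor:DD}, $p=D^\tau u(x)+\lambda\nu(x)$ with $\lambda\leq\lambda_+(x)$, while $\lambda=\langle p,\nu(x)\rangle\geq\lambda_+(x)$; hence $\lambda=\lambda_+(x)$ and $p=p^\star$.

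The main obstacle is precisely this identification of the normal component of the limit gradient. The one-sided derivative $\lambda_+(x)$ provided by \Cref{prop:lambda} has to be upgraded to a genuine limit of $\langle Du(x_n),\nu(x)\rangle$, which is possible only because the fractional semiconcavity delivers the quantitative $|y-x_n|^{3/2}$ error in the superdifferential inequality and, simultaneously, the upper semicontinuity of $D^+u$ needed to trap the limit inside the closed half-line of \Cref{cor:DD}. Choosing the perturbations $\epsilon_n$ to hit differentiability points while maintaining $|\epsilon_n|/h_n\to 0$ is harmless since differentiability points have full Lebesgue measure in $\OO$.
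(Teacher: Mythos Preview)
Your argument is correct and is precisely the reachable-gradient method that the paper is pointing to when it defers the proof to \cite[Theorem 4.3]{bib:CCC2}. The paper gives no explicit argument here, so there is nothing further to compare: identifying $p^\star=D^\tau u(x)+\lambda_+(x)\nu(x)$ as a limit of interior gradients $Du(x_n)$ along the inward normal, via the one-sided normal derivative of \Cref{prop:lambda} together with the fractional semiconcavity and the half-line description of $D^+u(x)$ from \Cref{cor:DD}, and then passing $H(x_n,Du(x_n))=c$ to the limit, is exactly the intended route.

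One technical point worth flagging is the step ``$p\in D^+u(x)$ by upper semicontinuity''. The three-point estimate of \Cref{prop:fractional_semiconcavity} is only asserted for $x\pm h\in\OOO$, so deducing the one-sided Taylor inequality $u(y)\le u(x_n)+\langle Du(x_n),y-x_n\rangle+C|y-x_n|^{3/2}$ and then passing it to the limit at a boundary point requires knowing that the relevant segments stay in $\OOO$. For the specific inequality you use (with $y=x$) this is immediate since $[x_n,x]$ is essentially the inward normal segment; for the full upper semicontinuity one uses the $C^2$ boundary (interior ball condition) to guarantee that for every $y\in\OOO$ the segment $[x_n,y]$ eventually lies in $\OOO$, or one flattens the boundary locally. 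This is exactly how the matter is handled in \cite{bib:CCC2}, so your sketch is in line with the cited source.
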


\begin{theorem}\label{thm:diff3}
Let $u$ be a constrained viscosity solution of \cref{eq:hj} on $\overline{\Omega}$. Let $x \in \OOO$ and $\gamma:[-\tau,\tau]\to \overline{\Omega}$ be a $(u,L,c)$-calibrated curve with $\gamma(0)=x$ for some $\tau >0$. Then,
\begin{itemize}
\item[($i$)] if $x \in \OO$, then
\begin{equation*}
\dot\gamma(0)=D_{p}H(x, Du(x));
\end{equation*}
\item[($ii$)] if $x \in \partial\OO$, then
\begin{equation*}
\dot\gamma(0)=D_{p}H(x, D^{\tau}u(x)+\lambda_{+}(x)\nu(x)).
\end{equation*}
\end{itemize}
\end{theorem}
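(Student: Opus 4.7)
The strategy is the same in both cases: differentiate the calibration identity along $\gamma$ at time $0^{+}$, combine the resulting equality with a superdifferential inequality to squeeze out Fenchel equality, and read off $\dot\gamma(0)$ from the strict convexity of $L(x,\cdot)$. The earlier propositions provide the three ingredients needed: differentiability (\Cref{prop:diff}), the structure of $D^{+}u(x)$ on the boundary (\Cref{cor:DD}), and the Hamilton--Jacobi equation on the extremal boundary gradient (\Cref{prop:hamiltonianlemma}).

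\emph{Case (i): $x\in\Omega$.} Since $\gamma$ is $(u,L,c)$-calibrated and $\gamma(0)=x$,
\[
\frac{u(\gamma(h))-u(x)}{h}=\frac1h\int_0^h L(\gamma(s),\dot\gamma(s))\,ds+c\xrightarrow[h\to 0^+]{} L(x,\dot\gamma(0))+c.
\]
By \Cref{prop:diff}, $u$ is differentiable at $x$, so the left hand side also tends to $\langle Du(x),\dot\gamma(0)\rangle$. Hence $\langle Du(x),\dot\gamma(0)\rangle = L(x,\dot\gamma(0))+c$. Since $u$ is both a viscosity subsolution in $\Omega$ and a supersolution on $\OOO$, at the interior point of differentiability $x$ one has $H(x,Du(x))=c$. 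Substituting, we get the Fenchel equality
\[
\langle Du(x),\dot\gamma(0)\rangle = L(x,\dot\gamma(0))+H(x,Du(x)),
\]
which, by strict convexity of $L$ in the velocity variable, forces $Du(x)=D_v L(x,\dot\gamma(0))$, i.e.\ $\dot\gamma(0)=D_pH(x,Du(x))$.

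\emph{Case (ii): $x\in\partial\Omega$.} Set $p:=D^{\tau}u(x)+\lambda_{+}(x)\nu(x)$. By \Cref{cor:DD}, $p\in D^{+}u(x)$, and by \Cref{prop:hamiltonianlemma}, $H(x,p)=c$. For $h>0$ small, the superdifferential inequality together with Lipschitz continuity of $u$ (\Cref{prop:equiliplemma}) and the semiconcavity estimate from \Cref{prop:fractional_semiconcavity} gives
\[
u(\gamma(h))-u(x)\le \langle p,\gamma(h)-x\rangle + o(|\gamma(h)-x|).
\]
Dividing by $h$ and passing to the limit yields $\limsup_{h\to 0^+}\frac{u(\gamma(h))-u(x)}{h}\le \langle p,\dot\gamma(0)\rangle$. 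On the other hand, the calibration identity again gives the limit $L(x,\dot\gamma(0))+c$ on the left. Therefore
\[
L(x,\dot\gamma(0))+H(x,p)=L(x,\dot\gamma(0))+c\le \langle p,\dot\gamma(0)\rangle,
\]
and the reverse inequality is the Fenchel--Young inequality. Equality again forces $p=D_vL(x,\dot\gamma(0))$, equivalently $\dot\gamma(0)=D_pH(x,p)$, which is the desired identity.

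\emph{Expected obstacles.} The computation is genuinely short once the right facts are in place; there is no subtle PDE argument in this theorem itself. The one spot that requires a small care is the limit $\langle p,(\gamma(h)-x)/h\rangle\to \langle p,\dot\gamma(0)\rangle$ in Case (ii): even though $\gamma([-\tau,\tau])\subset\OOO$ and $x\in\partial\Omega$, the calibrated curve is $C^{1}$ (it is an extremal of the action, cf.\ the Euler--Lagrange equation~\cref{eq:EL}), so the difference quotient converges to $\dot\gamma(0)$, and the $o(|\gamma(h)-x|)/h$ remainder is controlled because $|\gamma(h)-x|/h$ stays bounded. The heavier machinery, namely fractional semiconcavity, tangential differentiability, and the identification of $D^{+}u(x)$ with the half-line $\{D^{\tau}u(x)+\lambda\nu(x):\lambda\le\lambda_{+}(x)\}$, has all been established beforehand; the present theorem is essentially the dynamical consequence that ties those static facts to the transport velocity in the MFG continuity equation.
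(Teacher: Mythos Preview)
Your proof is correct. Case~(i) is essentially identical to the paper's argument. In Case~(ii), however, you take a genuinely different route. The paper proceeds by first observing (implicitly) that $\dot\gamma(0)$ is tangential to $\partial\Omega$ at $x$---indeed, $b_\Omega(\gamma(t))\le 0$ with equality at $t=0$ forces $\langle\nu(x),\dot\gamma(0)\rangle=0$---and then invokes \Cref{prop:tangentialdiff} to compute $\lim_{h\to 0^+}\frac{u(\gamma(h))-u(x)}{h}=\langle D^\tau u(x),\dot\gamma(0)\rangle$ directly; since the normal component of $p$ is orthogonal to $\dot\gamma(0)$, this equals $\langle p,\dot\gamma(0)\rangle$ and Fenchel equality follows. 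You instead bypass \Cref{prop:tangentialdiff} entirely: you use only that $p\in D^+u(x)$ (\Cref{cor:DD}) to get the one-sided inequality $L(x,\dot\gamma(0))+c\le\langle p,\dot\gamma(0)\rangle$, and then squeeze against Fenchel--Young. Your version is arguably more robust because it never requires checking that $\dot\gamma(0)$ is tangential, nor does it rely on the existence of tangential directional derivatives; the paper's version is shorter once that tangentiality is accepted. A minor remark: the $o(|\gamma(h)-x|)$ remainder you invoke is already contained in the definition of $D^+u(x)$ and does not need the semiconcavity estimate of \Cref{prop:fractional_semiconcavity} at this step.
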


\begin{proof}
We first prove ($i$). Since $\gamma$ is a calibrated curve, we have that for any $\eps\in(0,\tau]$
\begin{equation*}
u(\gamma(\eps))-u(x)=\int_{0}^{\eps}{L(\gamma(s), \dot\gamma(s))\ ds}+c\eps.
\end{equation*}
Thus, we deduce that
\begin{equation*}
\frac{u(\gamma(\eps))-u(x)}{\eps}-\frac{1}{\eps}\int_{0}^{\eps}{L(\gamma(s), \dot\gamma(s))\ ds}=c,
\end{equation*}
and passing to the limit as $\eps \to 0$, by \Cref{prop:diff} we get
\begin{equation*}
\langle Du(x), \dot\gamma(0) \rangle-L(x, \dot\gamma(0))=c.
\end{equation*}
Since $u$ is a viscosity solution of equation \cref{eq:hj} in $\Omega$, then $c=H(x,Du(x))$. Thus, we deduce that
\begin{equation*}
\langle Du(x), \dot\gamma(0) \rangle-L(x, \dot\gamma(0))=H(x,Du(x)).
\end{equation*}
By the properties of the Legendre transform, the above equality yields
\begin{equation*}
\dot\gamma(0)=D_{p}H(x, Du(x)).
\end{equation*}

In order to prove ($ii$), proceeding as above by \Cref{prop:tangentialdiff} and \Cref{prop:hamiltonianlemma} we get
\begin{equation*}
\langle D^{\tau}u(x), \dot\gamma(0) \rangle-L(x, \dot\gamma(0))=c=H(x, D^{\tau}u(x)+\lambda_{+}(x)\nu(x)).
\end{equation*}
Hence, we obtain that
\begin{equation*}
\dot\gamma(0)=D_{p}H(x, D^{\tau}u(x)+\lambda_{+}(x)\nu(x)).
\end{equation*}
This completes the proof.
\end{proof}

\subsection{Mather set for reversible Tonelli Lagrangians in $\overline{\Omega}$}
Assume {\bf (A1)}. Set
\[
\tilde{\mathcal{M}}_{\OOO}=\{(x,0)\in\OOO\times\mathbb{R}^d\ |\  L(x,0)=\inf_{y\in\mathbb{R}^d}L(y,0)\}.
\]
It is clear that the set $\tilde{\mathcal{M}}_{\OOO}$ is nonempty under assumption {\bf (A1)} and we call $\tilde{\mathcal{M}}_{\OOO}$ the Mather set associated with the Tonelli Lagrangian $L$. Note that
\[
\inf_{x\in\OOO}L(x,0)=\inf_{x\in\mathbb{R}^d}L(x,0)=\inf_{(x,v)\in\mathbb{R}^d\times\mathbb{R}^d}L(x,v),
\]
since $L$ is reversible. Hence, it is straightforward to check that the constant curve at $x$ is a minimizing curve for the action $A_L(\cdot)$, where $x\in \mathcal{M}_{\OOO}:=\pi_1\tilde{\mathcal{M}}_{\OOO}$. We call $\mathcal{M}_{\OOO}$ the projected Mather set.

\begin{definition}[Mather  measures]\label{def:def4}
Let $\mu\in \mathcal{P}(\OOO\times\mathbb{R}^d)$. We say that $\mu$ is a Mather measure for a reversible Tonelli Lagrangian $L$, if $\bar{\mu}\in\mathfrak{M}_L$ and  $\supp(\mu)\subset\tilde{\mathcal{M}}_{\OOO}$, where $\bar{\mu}$ is defined by $\bar{\mu}(B):=\mu\big(B\cap(\OOO\times\mathbb{R}^d)\big)$ for all $B\in \mathcal{B}(\mathbb{R}^d\times\mathbb{R}^d)$.
\end{definition}

\begin{remarks}\label{rem:mini}
Let $x\in\mathcal{M}_{\OOO}$. Let $u$ be a constrained viscosity solution of \cref{eq:hj} on $\overline{\Omega}$.
\begin{itemize}
  \item [(i)] Obviously, the atomic measure $\delta_{(x,0)}$, supported on $(x,0)$, is a Mather measure.
  \item [(ii)] Let $\gamma(t)\equiv x$, $t\in\mathbb{R}$.
Note that $u(\gamma(t'))-u(\gamma(t))=0$ for all $t\leq t'$ and that
\begin{align*}
\int_t^{t'}L(\gamma(s),\dot{\gamma}(s))ds+c(t'-t)=\int_t^{t'}L(x,0)ds+c(t'-t)=0,
\end{align*}
where the last equality comes from \cref{eq:3-35}. Hence, the curve $\gamma$ is a $(u,L,c)$-calibrated curve.
  \item [(iii)] By \Cref{thm:diff3}, we have that
 \begin{align}\label{eq:3-80}
 \begin{split}
\dot\gamma(0)=D_{p}H(x, Du(x)),\quad &\text{if}\ x\in\Omega,\\
\dot\gamma(0)=D_{p}H(x, D^{\tau}u(x)+\lambda_{+}(x)\nu(x)),\quad &\text{if}\ x \in \partial\OO.
\end{split}
\end{align}
  \end{itemize}
\end{remarks}

\begin{proposition}\label{Lip}
Let $u$ be a constrained viscosity solution of \cref{eq:hj} on $\overline{\Omega}$. The function
\[
W: \mathcal{M}_{\OOO}\to \mathbb{R}^d,\quad x\mapsto W(x)
\]
is Lipschitz with a Lipschitz constant depending only on $H$ and $\OO$, where
\begin{align*}
W(x)=
\begin{cases}
Du(x), & \quad \text{if}\ x \in \OO,
\\
D^{\tau}u(x)+\lambda_{+}(x)\nu(x), & \quad \text{if}\ x \in \partial\OO.
\end{cases}
\end{align*}
\end{proposition}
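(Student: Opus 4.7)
The plan is to show that on $\mathcal{M}_{\OOO}$ the map $W$ coincides with the $C^1$ map $x \mapsto D_v L(x,0)$, and then to deduce the Lipschitz bound directly from the structural estimates for the Tonelli Lagrangian $L$.

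First, I will identify $W(x)$ for any $x \in \mathcal{M}_{\OOO}$. By \Cref{rem:mini}(ii), the constant curve $\gamma \equiv x$ is a $(u,L,c)$-calibrated curve on all of $\R$, hence on every interval $[-\tau,\tau]$ with $\tau>0$. If $x \in \Omega$, then \Cref{thm:diff3}(i) yields
\[
0 \;=\; \dot\gamma(0) \;=\; D_pH(x, Du(x)),
\]
and by the Legendre duality $v = D_pH(x,p) \iff p = D_vL(x,v)$ we conclude that $W(x) = Du(x) = D_vL(x,0)$. If instead $x \in \partial\Omega$, then \Cref{thm:diff3}(ii) analogously gives
\[
0 \;=\; D_pH\bigl(x,\,D^\tau u(x) + \lambda_+(x)\nu(x)\bigr),
\]
whence $W(x) = D^\tau u(x) + \lambda_+(x)\nu(x) = D_vL(x,0)$. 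Thus in either case $W(x) = D_vL(x,0)$.

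Second, with this identification at hand, I will invoke the Tonelli bound from \Cref{def:def2}(b): $\|D^2_{vx}L(x,v)\| \leq C_2(1+|v|)$ for all $(x,v) \in \R^d \times \R^d$, which evaluated at $v = 0$ yields $\|D^2_{vx}L(x,0)\| \leq C_2$ uniformly in $x \in \R^d$. The mean value theorem then implies that $x \mapsto D_vL(x,0)$ is globally $C_2$-Lipschitz on $\R^d$, and in particular on $\mathcal{M}_{\OOO} \subset \OOO$. Since $C_2$ depends only on the structural constants of $L$, equivalently of $H$, this gives the claim.

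The only delicate point is to make the identification of $W(x)$ at $x \in \partial\Omega \cap \mathcal{M}_{\OOO}$ unambiguous, since $D^\tau u(x)$ and $\lambda_+(x)$ are defined through the existence of a calibrated curve together with a selection among super-differentials. This is settled by \Cref{prop:tangentialdiff}, \Cref{prop:lambda}, \Cref{cor:DD} and \Cref{prop:hamiltonianlemma}, all of which already presuppose the existence of a calibrated curve through $x$; the constant curve supplied by \Cref{rem:mini}(ii) plays exactly this role, so these results combine to make $W$ well-defined and to coincide with $D_vL(\cdot,0)$ on all of $\mathcal{M}_{\OOO}$. Once this step is verified, no further ingredient is needed beyond the mean value argument above.
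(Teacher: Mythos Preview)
Your proof is correct and follows essentially the same approach as the paper: identify $W(x)=D_vL(x,0)$ on $\mathcal{M}_{\OOO}$ via \Cref{rem:mini} and the Legendre duality, then read off the Lipschitz bound from the regularity of $L$. The paper's argument is more terse (invoking \cref{eq:3-80} directly), but the substance is the same.
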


\begin{proof}
In view of \cref{eq:3-80} and the properties of the Legendre Transform, for any $x$, $y\in \mathcal{M}_{\OOO}$ we have
\[
|W(x)-W(y)|=\left|\frac{\partial L}{\partial v}(x,0)-\frac{\partial L}{\partial v}(y,0)\right|\leq K_2|x-y|,
\]
where $K_2>0$ is a constant depending only on $H$ and $\OOO$.
\end{proof}

Let $\mu\in \mathcal{P}(\OOO\times\mathbb{R}^d)$ be minimizing measure and $\mu_1:={\pi_1}\sharp\mu$.
Then $\mu_1$ is a probability measure on $\OOO$ and $\supp(\mu_1)\subset \mathcal{M}_{\OOO}$.
It is clear that
\begin{proposition}\label{prop:1to1}
The map $\pi_1: \supp(\mu)\to\supp(\mu_1)$ is one-to-one and the inverse is given by $x\mapsto \big(x,D_pH(x,W(x))\big)$, where $W(x)$ is as in Proposition \ref{Lip}.
\end{proposition}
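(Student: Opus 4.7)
The plan is to observe that the statement is essentially a bookkeeping consequence of the fact that a Mather measure for the reversible Tonelli Lagrangian $L$ is supported on $\tilde{\mathcal{M}}_{\OOO}=\{(x,0):x\in\mathcal{M}_{\OOO}\}$. I would first recall from \Cref{def:def4} that $\supp(\mu)\subset\tilde{\mathcal{M}}_{\OOO}$, so every element of $\supp(\mu)$ has velocity $0$ and is uniquely determined by its first coordinate. This makes $\pi_1$ injective on $\supp(\mu)$, with pointwise inverse $x\mapsto(x,0)$ defined on $\supp(\mu_1)=\pi_1(\supp(\mu))\subset\mathcal{M}_{\OOO}$.

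The substantive point is then to identify this inverse with the map $x\mapsto(x,D_pH(x,W(x)))$. For this I would invoke \Cref{rem:mini}(ii): for each $x\in\mathcal{M}_{\OOO}$ the constant curve $\gamma(t)\equiv x$ is a $(u,L,c)$-calibrated curve. Applying \Cref{thm:diff3} (the interior case if $x\in\Omega$, with $W(x)=Du(x)$; the boundary case if $x\in\partial\Omega$, with $W(x)=D^{\tau}u(x)+\lambda_{+}(x)\nu(x)$) yields $\dot\gamma(0)=D_pH(x,W(x))$. Since $\dot\gamma\equiv 0$, this forces $D_pH(x,W(x))=0$, hence $(x,0)=(x,D_pH(x,W(x)))$ for every $x\in\supp(\mu_1)$, which is exactly the stated formula for $\pi_1^{-1}$.

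There is no real obstacle here: the proposition is a direct consequence of the structural characterization of the Mather set for reversible Tonelli Lagrangians (in particular \cref{eq:3-35}), together with the differentiability of $u$ along calibrated curves established in \Cref{thm:diff3}. The deeper work, namely the fractional semiconcavity of \Cref{prop:fractional_semiconcavity} and the tangential differentiability of \Cref{prop:tangentialdiff}, has already been carried out in the earlier subsections; what remains here is merely to assemble these pieces into a one-line verification. The only point where one might hesitate is the boundary case, but the constant curve $\gamma\equiv x$ trivially remains in $\OOO$, so \Cref{thm:diff3}(ii) applies verbatim.
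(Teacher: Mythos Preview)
Your proposal is correct and matches the paper's reasoning: the paper does not give a separate proof (it simply prefaces the proposition with ``It is clear that''), relying on exactly the ingredients you assemble---namely $\supp(\mu)\subset\tilde{\mathcal{M}}_{\OOO}$ from \Cref{def:def4} and the identity $0=\dot\gamma(0)=D_pH(x,W(x))$ recorded in \cref{eq:3-80} of \Cref{rem:mini}(iii). The only minor remark is that the equality $\supp(\mu_1)=\pi_1(\supp(\mu))$ uses that $\supp(\mu)$ is compact (as a closed subset of the compact set $\tilde{\mathcal{M}}_{\OOO}$), so its image under the continuous map $\pi_1$ is already closed.
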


%%%%%%%%%%%%%%%%%%%%%%%%%%%%%%%%%%%%%%%%%%%%%%%%%%%%%%%%%%%%%%%%%%%% 

\section{Ergodic MFG with  state constraints}
\label{sec:EMFG}

By the results proved so far the good candidate limit system for the MFG system \cref{eq:lab1} is the following 
\begin{equation}\label{eq:MFG1}
\begin{cases}
 H(x, Du)=F(x, m) + \lambda & \text{in} \quad \OOO, \\ \  \text{div}\Big(mV(x)\Big)=0  & \text{in} \quad \OOO,  \\ \ \int_{\OOO}{m(dx)}=1
\end{cases}
\end{equation}
where 
\begin{align*}
V(t,x)=
\begin{cases}
	D_{p}H(x, Du(x)), & x \in \supp(m) \cap \OO,
	\\
	D_{p}H(x, D^{\tau}u(x)+\lambda_{+}(x)\nu(x)), & x \in  \supp(m) \cap \partial\OO
\end{cases}
\end{align*}
and $\lambda_{+}$ is defined in \Cref{prop:lambda}.

\subsection{Assumptions}
From now on, we suppose that $L$ is a reversible strict Tonelli Lagrangian on $\mathbb{R}^d$. Let $F: \mathbb{R}^d \times \mathcal{P}(\OOO) \to \mathbb{R}$ be a function, satisfying the following  assumptions:

\begin{itemize}
	\item[\textbf{(F1)}] for every measure $m \in \mathcal{P}(\OOO)$ the function $x \mapsto F(x,m)$ is of class $C^{2}_{b}(\mathbb{R}^d)$ and	
\begin{equation*}
	\mathcal{F}:=\sup_{m \in \mathcal{P}(\OOO)} \sum_{|\alpha|\leq 2} \| D^{\alpha}F(\cdot, m)\|_{\infty} < +\infty,
	\end{equation*}
	where $\alpha=(\alpha_1,\cdots,\alpha_n)$ and $D^{\alpha}=D^{\alpha_1}_{x_1}\cdots D^{\alpha_n}_{x_n}$;
	\item[\textbf{(F2)}] for every $x \in \mathbb{R}^d$ the function $m \mapsto F(x,m)$ is Lipschitz continuous and
 $${\rm{Lip}}_{2}(F):=\displaystyle{\sup_{\substack{x\in \mathbb{R}^d\\ m_1,\ m_2 \in \mathcal{P}(\OOO) \\ m_1\neq m_2 } }}\frac{|F(x,m_1)-F(x,m_2)|}{d_{1}(m_1, m_2)} < +\infty;$$
	\item[\textbf{(F3)}]  there is a constant $C_F>0$ such that for every $m_{1}$, $m_{2} \in \mathcal{P}(\OOO)$,
	\begin{equation*}
	\int_{\OOO}{(F(x,m_{1})-F(x, m_{2}))\ d(m_{1}-m_{2})} \geq C_F \int_{\OOO}{\left(F(x,m_{1})-F(x,m_{2})\right)^{2}\ dx};
	\end{equation*}

	 \item[\textbf{(A2)}] $\displaystyle{\argmin_{x \in \R^{d}}}\big(L(x,0)+F(x,m)\big)\cap \OOO\neq\emptyset$, $\quad \forall m\in  \mathcal{P}(\OOO)$.
	\end{itemize}

\medskip\noindent
Note that assumption {\bf (A2)} is MFG counterpart of assumption {\bf (A1)} which guarantees, as we will see, that for any measure $m$ the Mather set associated with the Lagrangian $L(x,v)+F(x,m)$ is non-empty.

\begin{definition}[Solutions of constrained ergodic  MFG system]\label{def:ers}
A triple $$(\bar\lambda, \bar u, \bar m) \in \mathbb{R} \times C(\OOO) \times \mathcal{P}(\OOO)$$ is called a solution of constrained ergodic MFG system \cref{eq:MFG1} if
	\begin{itemize}
		\item[($i$)] $\bar u$ is a constrained viscosity solution of the first equation of system \cref{eq:MFG1};
		\item[($ii$)] $D\bar u$ exists for $\bar m-a.e. \ \ x \in \OOO$;
		\item [($iii$)] $\bar m$ is a projected minimizing measure, i.e., there is a minimizing measure $\eta_{\bar m}$ for $L_{\bar m}$ such that $\bar m={\pi_{1}}\sharp \eta_{\bar m}$;
		\item[($iv$)] $\bar m$ satisfies the second equation of system \cref{eq:MFG1} in the sense of distributions, that is,
		$$ \int_{\OOO}{\big\langle Df(x), V(x) \big\rangle\ d\bar m(x)}=0, \quad  \forall f \in C_c^{\infty}(\OOO),
		$$
where the vector field $V$ is related to $\bar u$ in the following way: if $x\in\OO\cap\supp(\bar m)$, then $D\bar u(x)$ exists and
\[
V(x)=D_pH(x,D\bar u(x));
\]
if $x\in\partial\OO\cap\supp(\bar m)$, then
\[
V(x)=D_pH(x,D^{\tau}\bar{u}(x)+\lambda_{+}(x)\nu(x)).
\]
	\end{itemize}
\end{definition}
We denote by $\mathcal{S}$ the set of  solutions of system \cref{eq:MFG1} and \Cref{thm:MR1} below guarantees the nonemptiness of such set.

\begin{definition}[Mean field Lagrangians and Hamiltonians]\label{def:mfgl}
Let $H$ be the reversible strict Tonelli Hamiltonian associated with $L$.
For any  $m\in \mathcal{P}(\OOO)$, define the mean field Lagrangian and Hamiltonian associated with $m$
by
\begin{align}
L_{m}(x,v)&:=L(x,v)+F(x,m),\,\quad (x,v)\in\mathbb{R}^d\times\mathbb{R}^d,\label{eq:lm}\\
H_{m}(x,p)&:=H(x,p)-F(x,m),\quad (x,p)\in\mathbb{R}^d\times\mathbb{R}^d\label{eq:hm}.
\end{align}
\end{definition}
By assumption {\bf (F1)}, it is clear that for any given $m\in \mathcal{P}(\OOO)$,  $L_m$ (resp. $H_{m}$) is a reversible strict Tonelli Lagrangian (resp. Hamiltonian). So, in view of {\bf (A2)}, all the results recalled and proved in \Cref{sec:HJstateconstraints} still hold for $L_m$ and $H_m$.

In view of \Cref{prop:criticalvalues}, for any given $m\in \mathcal{P}(\OOO)$, we have $c_{H_{m}}=c_{L_m}$. Denote the common value of $c_{H_{m}}$ and $c_{L_m}$ by $\lambda(m)$.

\begin{lemma}[Lipschitz continuity of the critical value]\label{lem:LEM1}
The function $m\mapsto \lambda(m)$ is Lipschitz continuous on $\mathcal{P}(\OOO)$ with respect to the metric $d_{1}$, where the Lipschitz constant depends on $F$ only.
\end{lemma}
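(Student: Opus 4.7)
The plan is to exploit the explicit formula \cref{eq:3-35} for the critical value of a reversible Tonelli Lagrangian. For each $m \in \mathcal{P}(\OOO)$, the mean field Lagrangian $L_{m}(x,v)=L(x,v)+F(x,m)$ defined in \cref{eq:lm} is reversible and strictly Tonelli by \textbf{(F1)}, and assumption \textbf{(A2)} guarantees that $\argmin_{x\in\mathbb{R}^d}\bigl(L(x,0)+F(x,m)\bigr)$ is nonempty (indeed, it intersects $\OOO$). Consequently, formula \cref{eq:3-35} applied to $L_{m}$ gives
\[
\lambda(m)=c_{L_{m}}=-\min_{x\in\mathbb{R}^d}\bigl(L(x,0)+F(x,m)\bigr),
\]
with the minimum attained for each $m$.

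Next I would invoke the elementary fact that if $f_{1},f_{2}\colon\mathbb{R}^d\to\mathbb{R}$ are bounded below and attain their infima, then $|\inf f_{1}-\inf f_{2}|\leq\sup_{\mathbb{R}^d}|f_{1}-f_{2}|$, which is a one-line consequence of evaluating $f_{1}$ at a minimizer of $f_{2}$ and vice versa. Applying this with $f_{i}(x)=L(x,0)+F(x,m_{i})$, the common term $L(\cdot,0)$ cancels and I obtain
\[
|\lambda(m_{1})-\lambda(m_{2})|\leq\sup_{x\in\mathbb{R}^d}|F(x,m_{1})-F(x,m_{2})|.
\]

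To finish, assumption \textbf{(F2)} bounds the right-hand side by $\mathrm{Lip}_{2}(F)\,d_{1}(m_{1},m_{2})$, yielding the desired Lipschitz continuity of $m\mapsto\lambda(m)$ with constant $\mathrm{Lip}_{2}(F)$, which depends on $F$ alone. There is no serious obstacle here; the argument reduces to combining the $1$-Lipschitz property of the $\min$ operation with the $d_{1}$-Lipschitz dependence of $F$ on its measure argument. The only point needing attention is the invocation of \textbf{(A2)}, which is what allows \cref{eq:3-35} to be applied verbatim to $L_{m}$ for every $m\in\mathcal{P}(\OOO)$; without \textbf{(A2)} one would have to fall back on the min-max formula \cref{eq:lab55} or the characterization \cref{eq:3-55}, which would still work but would complicate the estimate.
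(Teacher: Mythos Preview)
Your proof is correct. The paper's own argument has the same structure---use a variational characterization of $\lambda(m)$, observe that the infimum operation is $1$-Lipschitz in the supremum norm, and conclude via \textbf{(F2)}---but it invokes the inf--sup formula \cref{eq:3-55} for $c_{H_m}$ rather than the explicit minimum \cref{eq:3-35} for $c_{L_m}$. Your route via \cref{eq:3-35} is slightly more direct under the reversibility assumption and \textbf{(A2)}, since it reduces everything to a pointwise minimum over $x$; the paper's route via \cref{eq:3-55} has the minor advantage of not relying on \textbf{(A2)} for this particular estimate (though \textbf{(A2)} is in force throughout the section anyway, and is needed even to define $\lambda(m)$ as the common value $c_{H_m}=c_{L_m}$). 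Both approaches yield the same Lipschitz constant $\mathrm{Lip}_{2}(F)$.
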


Since the characterization \cref{eq:3-55} holds true, the proof of this result is an adaptation of \cite[Lemma 1]{bib:CCMW}.

\bigskip

\subsection{Existence of solutions of constrained ergodic  MFG systems}
We are now in a position to prove $\mathcal{S}\neq\emptyset$.
\begin{theorem}[Existence of solutions of \cref{eq:MFG1}]\label{thm:MR1}
	Assume {\bf (F1)}, {\bf (F2)}, and {\bf (A2)}.
	\begin{itemize}
		\item [($i$)] There exists at least one solution $(c_{H_{\bar m}},\bar u,\bar m)$ of system \cref{eq:MFG1}, i.e., $\mathcal{S}\neq\emptyset$.
		\item [($ii$)] Assume, in addition, {\bf (F3)}. Let
	$(c_{H_{\bar m_{1}}}, \bar u_{1}, \bar m_{1})$, $(c_{H_{\bar m_{2}}}, \bar u_{2}, \bar m_{2})\in \mathcal{S}$. Then,
	\[
	F(x,\bar m_{1})= F(x,\bar m_{2}),\quad \forall x\in \OOO\quad  \text{and}\quad  c_{H_{\bar m_{1}}}=c_{H_{\bar m_{2}}}.
	\]
\end{itemize}
\end{theorem}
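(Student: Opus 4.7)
My plan is to prove existence by a Kakutani--Fan--Glicksberg fixed point on $\mathcal{P}(\OOO)$, and uniqueness by a monotonicity argument based on the representation formula. The structural fact I would exploit in both parts is that, since $L$ is reversible Tonelli, so is the mean-field Lagrangian $L_m = L + F(\cdot,m)$ (by {\bf (F1)}); hence \cref{eq:3-35} applied to $L_m$ identifies its critical value as $\lambda(m) = -\min_{x\in\OOO}(L(x,0)+F(x,m))$, with projected Mather set $\mathcal{M}_{\OOO}^m := \argmin_{x\in\OOO}(L(x,0)+F(x,m))$, non-empty by {\bf (A2)}. Crucially, every point of $\mathcal{M}_{\OOO}^m$ is an equilibrium of the Euler--Lagrange flow for $L_m$.

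For existence, I would introduce the set-valued map
$$\Phi(m) := \{\bar m \in \mathcal{P}(\OOO) : \supp(\bar m) \subset \mathcal{M}_{\OOO}^m\},$$
which coincides with the set of projected Mather measures for $L_m$ (any measure supported on flow equilibria is automatically invariant). Each $\Phi(m)$ is non-empty, convex, and weak-$\ast$ closed; the graph of $\Phi$ is closed because the uniform convergence $F(\cdot,m_n)\to F(\cdot,m)$ guaranteed by {\bf (F1)}--{\bf (F2)} gives upper semicontinuity of the argmin. Kakutani--Fan--Glicksberg then produces a fixed point $\bar m \in \Phi(\bar m)$. To finish, I take $\bar u$ to be any constrained viscosity solution of $H_{\bar m}(x,Du) = \lambda(\bar m)$ on $\OOO$ (existence via \cref{eq:3-55} and \Cref{prop:for} applied to $H_{\bar m}$), and set $\bar\lambda = \lambda(\bar m)$. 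Items (i) and (iii) of \Cref{def:ers} are automatic. For (ii), at each $x\in\supp(\bar m)$ the constant curve $\gamma\equiv x$ is $(\bar u,L_{\bar m},\bar\lambda)$-calibrated (as in \Cref{rem:mini}(ii) applied to $L_{\bar m}$), so \Cref{prop:diff} and \Cref{prop:tangentialdiff} yield (tangential) differentiability of $\bar u$ there. For (iv), \Cref{thm:diff3} identifies the velocity $V(x)$ of \Cref{def:ers}(iv) with $\dot\gamma(0)=0$; hence $V\equiv 0$ on $\supp(\bar m)$ and the continuity equation holds trivially.

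For uniqueness, let $(\lambda_i,\bar u_i,\bar m_i)\in\mathcal{S}$, $i=1,2$. Testing the representation formula \cref{eq:LaxOleinik} for $\bar u_j$ (with $H_{\bar m_j}$) against the constant curve $\gamma\equiv x$, dividing by $t$ and letting $t\to 0^+$, yields
$$-\lambda_j \leq L(x,0) + F(x,\bar m_j),\quad \forall x \in \OOO,$$
with equality on $\supp(\bar m_j)\subset \mathcal{M}_{\OOO}^{\bar m_j}$. Combining the equality for index $i$ with the inequality for $j\neq i$ at points $x\in\supp(\bar m_i)$ gives $F(x,\bar m_i)+\lambda_i \leq F(x,\bar m_j)+\lambda_j$; integrating against $\bar m_i$ and summing the cases $(i,j)=(1,2),(2,1)$ (the $\lambda$'s cancel since each $\bar m_i$ is a probability measure) leads to
$$\int_{\OOO}(F(x,\bar m_1)-F(x,\bar m_2))\, d(\bar m_1-\bar m_2)(x) \leq 0.$$
Assumption {\bf (F3)} then forces $F(\cdot,\bar m_1) = F(\cdot,\bar m_2)$ a.e., hence everywhere on $\OOO$ by continuity; the formula $\lambda_i = -\min_{x \in \OOO}(L(x,0)+F(x,\bar m_i))$ then gives $\lambda_1=\lambda_2$.

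I expect the main obstacle to be the closed-graph verification for $\Phi$, which depends on upper semicontinuity of the argmin of $L(\cdot,0)+F(\cdot,m)$ under $d_1$-perturbations of $m$ and is precisely where the strong regularity {\bf (F1)}--{\bf (F2)} earns its keep. Beyond this, the scheme is unusually clean because reversibility makes the Mather set consist of equilibria, so the velocity field vanishes on $\supp(\bar m)$; this collapses both the continuity equation and the Lasry--Lions step to the elementary comparison supplied by the representation formula.
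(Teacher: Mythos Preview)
Your proposal is correct and follows essentially the same approach as the paper: existence via a Kakutani-type fixed point on the set-valued map sending $m$ to the projected Mather measures of $L_m$, and uniqueness via the Lasry--Lions monotonicity argument using that constant curves at points of $\mathcal{M}_{\OOO}^{\bar m_i}$ are $(\bar u_i,L_{\bar m_i},\lambda_i)$-calibrated. Your uniqueness step is slightly more streamlined---you work directly with the pointwise identity $-\lambda_j=\min_x(L(x,0)+F(x,\bar m_j))$ from \cref{eq:3-35}, whereas the paper routes the same inequalities through measures on path space $\Gamma$ and Fubini---but the substance is identical.
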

	
\begin{remarks}
By ($ii$) in \Cref{thm:MR1}, it is clear that each element of $\mathcal{S}$ has the form $(\bar \lambda,\bar u,\bar m)$, where $\bar m$ is a projected minimizing measure and $\bar\lambda$ denotes the common Ma\~n\'e critical value of $H_{\bar m}$. 
\end{remarks}	

\proof[Proof of \Cref{thm:MR1}]
	The existence result $(i)$ follows by the application of the Kakutani fixed point theorem. Indeed, by the arguments in Section 3, for any $m\in\mathcal{P}(\OOO)$, there is a minimizing measure $\eta_{m}$ associated with $L_m$. Thus we can define a set-valued map as follows
$$
\Psi: \PP(\OOO) \rightarrow \PP(\OOO), \quad m \mapsto \Psi(m)
$$
where
\[
\Psi(m):=\left\{{\pi_{1}}\sharp \eta_{m}:\ \eta_{m}\ \text{is a minimizing measure for}\ L_m \right\}.
\]
	Then, a fixed point $\bar{m}$ of $\Psi$ is a solution in the sense of distributions of the stationary continuity equation and there exists a constrained viscosity solution associated with $H_{\bar{m}}$ by \cite{bib:Mit}. For more details see for instance \cite[Theorem 3]{bib:CCMW}.  
	
Let
	$(c_{H_{\bar m_{1}}}, \bar u_{1}, \bar m_{1})$, $(c_{H_{\bar m_{2}}}, \bar u_{2}, \bar m_{2})\in \mathcal{S}$. Given any $T>0$,
define the following sets of curves:
\[
\Gamma:=\{\gamma\in AC([0,T];\mathbb{R}^d): \gamma(t)\in\OOO, \ \forall t\in[0,T]\},
\]
and
\[
\tilde{\mathcal{M}}_{\bar{m}_i}:=\big\{\text{constant curves}\ \gamma:[0,T]\to\OOO,\ t\mapsto x:\ x\in \argmin_{y\in\OOO}L_{\bar{m}_i}(y,0)\big\},\quad i=1,2.
\]
One can define Borel probability measures on $\Gamma$ by
\begin{align*}
\mu_i(\tilde{B})=
\begin{cases}
\bar{m}_1(B), & \quad \tilde{B}\cap \tilde{\mathcal{M}}_{\bar{m}_i}\neq\emptyset,
\\
0, & \quad \text{otherwise},
\end{cases}
\end{align*}
where
\[
B=\{x\in\OOO: \text{the constant curve}\ t\mapsto x\ \text{belongs to}\ \tilde{B}\cap \tilde{\mathcal{M}}_{\bar{m}_i}\}.
\]
By definition, it is direct to see that $\supp(\mu_i)\subset \tilde{\mathcal{M}}_{\bar{m}_i}$ and
\begin{align}\label{eq:4-200}
\bar{m}_i=e_t\sharp\mu_i,\quad \forall t\in[0,T].
\end{align}
Given any $x_0\in\supp(\bar{m}_1)$, let $\gamma_1$ denote the constant curve $t\mapsto x_0$, then for any $t>0$ we have that
\begin{align*}
0=\bar{u}_1(x_0)-\bar{u}_1(\gamma_1(0))=\int_0^t\big(L(\gamma_1,\dot{\gamma}_1)+F(\gamma_1,\bar{m}_1)\big)ds+c_{H_{\bar m_{1}}}t,\\
0=\bar{u}_2(x_0)-\bar{u}_2(\gamma_1(0))\leq\int_0^t\big(L(\gamma_1,\dot{\gamma}_1)+F(\gamma_1,\bar{m}_2)\big)ds+c_{H_{\bar m_{2}}}t,
\end{align*}
which imply that
\[
\int_0^t\big(F(\gamma_1,\bar{m}_1)-F(\gamma_1,\bar{m}_2)\big)ds+(c_{H_{\bar m_{1}}}-c_{H_{\bar m_{2}}})t\leq 0.
\]
By integrating the above inequality on $\Gamma$ with respect to $\mu_1$, we get that
\[
\int_\Gamma\int_0^t\big(F(\gamma_1,\bar{m}_1)-F(\gamma_1,\bar{m}_2)\big)dsd\mu_1+(c_{H_{\bar m_{1}}}-c_{H_{\bar m_{2}}})t\leq 0.
\]
In view of Fubini Theorem and \cref{eq:4-200}, we deduce that
\[
\int_0^t\int_{\OOO}\big(F(x,\bar{m}_1)-F(x,\bar{m}_2)\big)d\bar{m}_1ds+(c_{H_{\bar m_{1}}}-c_{H_{\bar m_{2}}})t\leq 0,
\]
implying that
\[
\int_{\OOO}\big(F(x,\bar{m}_1)-F(x,\bar{m}_2)\big)d\bar{m}_1+(c_{H_{\bar m_{1}}}-c_{H_{\bar m_{2}}})\leq 0.
\]
Exchanging the roles of $\bar{m}_1$ and $\bar{m}_2$, we obtain that
\[
\int_{\OOO}\big(F(x,\bar{m}_2)-F(x,\bar{m}_1)\big)d\bar{m}_2+(c_{H_{\bar m_{2}}}-c_{H_{\bar m_{1}}})\leq 0.
\]
Hence, we get that
\[
\int_{\OOO}\big(F(x,\bar{m}_2)-F(x,\bar{m}_1)\big)d(\bar{m}_2-\bar m_{1})\leq 0.
\]
Recalling assumption ({\bf{F3}}), we deduce that $F(x,\bar{m}_2)=F(x,\bar{m}_1)$ for all $x\in\OOO$ and thus $c_{H_{\bar m_{1}}}=c_{H_{\bar m_{2}}}$.
\qed

\begin{remarks}\label{rem:stress1}
	Note that even though the uniqueness result is a consequence of the classical Lasry--Lions monotonicity condition for MFG system, our proof here differs from the one in \cite{bib:CCMW} and in \cite{bib:CAR}: indeed, in our setting the stationary continuity equation has different vector fields depending on the mass of the measure in $\OO$ and the mass on $\partial \OO$. This is why we addressed the problem representing the Mather measures associated with the system through measures supported on the set of calibrated curves. 
\end{remarks}

\section{Convergence of mild solutions of the constrained MFG problem}
\label{sec:convergence}
This section is devoted to the long-time behavior of first-order constrained MFG system \cref{eq:lab1}.
We will assume {\bf (F1)}, {\bf (F2)}, {\bf (F3)},  {\bf (A2)}, and the following additional conditions:
\begin{itemize}
	\item [\textbf{(U)}] $u^f\in C^1_b(U)$, where $U$ is an open subset of $\mathbb{R}^d$ such that $\OOO\subset U$.

	\item [\textbf{(A3)}] the set-value map $(\PP(\OOO), d_{1}) \longrightarrow (\R^{d}, |\cdot|)$ such that $$m \mapsto \argmin_{x \in \OOO} \{L(x,0)+F(x,m)\}$$ has a Lipschitz selection, i.e. $\xi_{*}(m) \in \argmin_{x \in \OOO}\{L(x,0) + F(x,m)\}$ and moreover, for any $m \in \PP(\OOO)$ $$\min_{x \in \OOO} \left\{L(x,0)+F(x,m) \right\}=0.$$
\end{itemize}

\begin{remarks}\label{rem:re5-100}
Assumptions {\bf (A2)} and  {\bf (A3)} imply that $L(x,v)+F(x,m)\geq 0$ for all $(x,v)\in \mathbb{R}^d\times\mathbb{R}^d$ and all $m\in\mathcal{P}(\OOO)$. Moreover, if $(u^{T}, m^{\eta})$ is a mild solution of the MFG system \cref{eq:lab1} then by assumption {\bf (A3)} we have that there exists a Lipschitz continuous curve $\xi_{*}: [0,T] \to \OOO$ such that $$\xi_{*}(t) \in \argmin_{x \in \OOO}\{L(x,0)+F(x, m^{\eta}_{t})\}$$ and $L(\xi_{*}(t), \dot\xi_{*}(t))+F(\xi_{*}(t), m^{\eta}_{t})=0$ for any $t \in [0,T]$.
\end{remarks}
We provide now two examples of mean field Lagrangian that satisfies assumption {\bf (A3)}. 
\begin{example}\em
	\begin{enumerate}
	\item 	Let $L_{m}(x,v)=\frac{1}{2}|v|^{2}+f(x)g(m)$ for some continuous functions $f: \OOO \to \R$ and $g: \PP(\OOO) \to \R_{+}$. Then,  we have that $$\argmin_{x \in \OOO}\{L(x,0)+f(x)g(m)\}=\argmin_{x \in \OOO} f(x)g(m), $$ $\bar{x} =\displaystyle{\min_{x \in \OOO}}\ f(x)g(m)$ is unique and doesn't depend on $m \in \PP(\OOO)$.  Thus, the Lipschitz selection of minimizers of the mean field Lagrangian is the constant one, i.e. $\xi_{*}(m) \equiv \bar{x}$. 
	\item Let $L_{m}(x,v)= \frac{1}{2}|v|^{2} + \big(f(x) + g(m) \big)^{2}$, where $g: \PP(\OOO) \to \R$ is Lipschitz continuous with respect to the $d_{1}$ distance and $f: \OOO \to \R$ is such that $f^{-1}$ is Lipschitz continuous. Thus the minimum is  reached at $f(x)=-g(m)$ and by the assumptions on $f$ and $g$ such minimum has a Lipschitz depends with respect to $m \in \PP(\OOO)$. 
	\end{enumerate}
\end{example}

\subsection{Convergence of mild solutions}

In order to get the convergence result of mild solutions of system \cref{eq:lab1}, we prove two preliminary results first.
\begin{lemma}[Energy estimate]\label{lem:energyestimate}
 There exists a constant $\bar\kappa \geq 0$ such that for any mild solution $(u^{T}, m^{\eta}_t)$ of constrained MFG system \cref{eq:lab1} associated with a constrained MFG equilibrium $\eta \in \PP_{m_{0}}(\Gamma)$, and any solution $(\bar{u}, \bar\lambda, \bar{m})$ of constrained ergodic  MFG system \cref{eq:MFG1}, there holds\begin{equation*}
\int_{0}^{T}\int_{\OOO}{\Big(F(x, m^{\eta}_{t})-F(x, \bar{m}) \Big)\ \big(m^{\eta}_{t}(dx)-\bar{m}(dx)\big)dt}\leq \bar\kappa,
\end{equation*}
where $\bar \kappa$ depends only on $L$, $F$ and $\OO$.
\end{lemma}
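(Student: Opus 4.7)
The plan is a duality-type energy argument in the spirit of the Lasry--Lions estimate, comparing the time-dependent value function $u^T$ with the stationary function $\bar u$ through their respective dynamic programming principles, evaluated along two distinguished families of curves: the MFG-optimal curves in $\supp(\eta)$, and the constant curves sitting at points of $\supp(\bar m)$, which are $L_{\bar m}$-calibrated thanks to item~(iii) of \Cref{def:ers}. Combining the two resulting estimates will produce the monotone quantity on the left-hand side of the claim, and bounding the residual right-hand side then reduces to Lipschitz estimates against the Kantorovich--Rubinstein duality \cref{eq:2-100}.

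For the first bound I fix $\gamma \in \supp(\eta)$. Since $(u^T, m^\eta)$ is a mild solution, \Cref{def:equili} and \Cref{def:defmild} imply that $\gamma$ minimizes $J_\eta$ over $\Gamma(\gamma(0))$, hence
\begin{equation*}
u^T(0,\gamma(0)) = \int_0^T \bigl[L(\gamma(s),\dot\gamma(s)) + F(\gamma(s), m^\eta_s)\bigr]\,ds + u^f(\gamma(T)).
\end{equation*}
On the other hand, \Cref{prop:for} applied to the pair $(L_{\bar m}, H_{\bar m})$ yields, for any $\gamma \in \Gamma$,
\begin{equation*}
\bar u(\gamma(T)) - \bar u(\gamma(0)) \leq \int_0^T \bigl[L(\gamma,\dot\gamma) + F(\gamma, \bar m)\bigr]\,ds + \bar\lambda T.
\end{equation*}
Subtracting and integrating against $\eta$ (using $e_0\sharp\eta = m_0$ and $e_T\sharp\eta = m^\eta_T$),
\begin{equation*}
\int_0^T \int_{\OOO}[F(x, m^\eta_s)-F(x, \bar m)]\,m^\eta_s(dx)\,ds \;\leq\; \mathcal A,
\end{equation*}
where $\mathcal A := \int_{\OOO}(u^T(0,\cdot)+\bar u)\,dm_0 - \int_{\OOO}(u^f+\bar u)\,dm^\eta_T + \bar\lambda T$.

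For the reverse direction I fix $x \in \supp(\bar m)$. By item~(iii) of \Cref{def:ers}, $x$ lies in the projected Mather set of $L_{\bar m}$, so by \Cref{rem:mini} the constant curve $\alpha(s)\equiv x$ is $(\bar u, L_{\bar m}, \bar\lambda)$-calibrated, which is equivalent to the pointwise identity $L(x,0)+F(x,\bar m)+\bar\lambda = 0$. Inserting $\alpha$ as an admissible competitor into the variational characterization \cref{eq:MFGValuefunction} of $u^T(0,x)$ and integrating against $\bar m$,
\begin{equation*}
\mathcal B \;:=\; \int_{\OOO} u^T(0,\cdot)\,d\bar m - \int_{\OOO} u^f\,d\bar m + \bar\lambda T \;\leq\; \int_0^T \int_{\OOO}[F(x,m^\eta_s)-F(x,\bar m)]\,\bar m(dx)\,ds.
\end{equation*}
Adding the first estimate to the negative of this one, the $\bar\lambda T$ terms cancel and a straightforward rearrangement gives
\begin{equation*}
\int_0^T \int_{\OOO}[F(x,m^\eta_s)-F(x,\bar m)](m^\eta_s-\bar m)(dx)\,ds \;\leq\; \mathcal A - \mathcal B,
\end{equation*}
where $\mathcal A - \mathcal B = \int_{\OOO} u^T(0,\cdot)\,d(m_0-\bar m) + \int_{\OOO} \bar u\,d(m_0 - m^\eta_T) - \int_{\OOO} u^f\,d(m^\eta_T - \bar m)$.

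It remains to bound each of these three terms by a constant independent of $T$. Each is of the form $\int \phi\,d(\mu-\nu)$ with $\mu,\nu \in \PP(\OOO)$, hence is at most $\mathrm{Lip}(\phi)\cdot d_1(\mu,\nu) \leq \mathrm{Lip}(\phi)\cdot \mathrm{diam}(\OOO)$ by \cref{eq:2-100}. The Lipschitz constants of $\bar u$ and $u^f$ are controlled, uniformly in $\bar m$, by \Cref{prop:equiliplemma} applied to $H_{\bar m}$ and by assumption~\textbf{(U)}, respectively. The main obstacle I foresee is establishing a \emph{uniform-in-$T$} Lipschitz bound for $u^T(0,\cdot)$; I expect this to follow by adapting the $C$-quasiconvexity argument of \Cref{prop:equiliplemma} directly to the Lax--Oleinik type formula \cref{eq:MFGValuefunction}, using the uniform bound $\|F(\cdot,m)\|_\infty \leq \mathcal F$ from \textbf{(F1)} together with the Lipschitz continuity of $u^f$ from \textbf{(U)}. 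Once that bound is in hand, the resulting constant $\bar\kappa$ depends only on $L$, $F$, $u^f$, and $\OOO$, as claimed.
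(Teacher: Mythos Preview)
Your structural argument is correct and matches the paper's: split the monotone quantity into an integral against $m^\eta_s$ and one against $\bar m$, use optimality of $\gamma\in\supp(\eta)$ for $u^T$ together with the domination inequality for $\bar u$ on the first piece, and calibration of constant curves in $\supp(\bar m)$ for $\bar u$ together with the variational upper bound for $u^T$ on the second. The $\bar\lambda T$ terms cancel exactly as you say, and your expression for $\mathcal A-\mathcal B$ is correct.

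The divergence from the paper is in how the residual is controlled. You regroup it as three pairings $\int\phi\,d(\mu-\nu)$ and propose to bound each by $\mathrm{Lip}(\phi)\cdot\mathrm{diam}(\OOO)$, which forces you to prove a uniform-in-$T$ Lipschitz bound on $u^T(0,\cdot)$. Your suggested route---adapting \Cref{prop:equiliplemma} directly to \cref{eq:MFGValuefunction}---does not work as stated: in \Cref{prop:equiliplemma} the time $t$ in \cref{eq:LaxOleinik} is a free parameter that one sets equal to the quasiconvex time $\tau(x,y)$, whereas in \cref{eq:MFGValuefunction} the horizon $T$ is fixed. Running the quasiconvexity argument through the dynamic programming principle yields only $u^T(0,x)-u^T(\tau(x,y),y)\leq C|x-y|$, and closing the gap to $u^T(0,x)-u^T(0,y)$ requires an additional uniform time-Lipschitz estimate on $u^T$ or a uniform-in-$T$ velocity bound on minimizers---doable, but not the one-line adaptation you describe, and in any case relying on assumption \textbf{(A3)}, which you do not invoke.

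The paper sidesteps this entirely: rather than a Lipschitz bound, it proves a uniform $L^\infty$ bound on $u^T(0,\cdot)$ directly from \textbf{(A3)}. The upper bound comes from testing \cref{eq:MFGValuefunction} with the concatenation of a short quasiconvex arc and the zero-cost curve $\xi_*$ furnished by \textbf{(A3)}; the lower bound $u^T(0,\cdot)\geq -\|u^f\|_\infty$ comes from $L+F\geq 0$ (again \textbf{(A3)} via \Cref{rem:re5-100}). With $u^T(0,\cdot)$ uniformly bounded, the paper estimates the pieces $\int u^T(0,\cdot)\,dm_0$ and $\int u^T(0,\cdot)\,d\bar m$ separately in $L^\infty$ rather than pairing them. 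Your grouping is cleaner algebraically, but to make it rigorous you should either import the paper's $L^\infty$ bound (which also controls the oscillation, hence $\int u^T(0,\cdot)\,d(m_0-\bar m)$) or supply a genuine proof of the uniform Lipschitz estimate.
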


\begin{proof}
As we did in the proof of \Cref{thm:MR1} (ii), one can define a Borel probability measure on $\Gamma$ by
\begin{align*}
\bar\eta(\tilde{B})=
\begin{cases}
\bar{m}(B), & \quad \tilde{B}\cap \tilde{\mathcal{M}}_{\bar{m}}\neq\emptyset,
\\
0, & \quad \text{otherwise},
\end{cases}
\end{align*}
where
\[
B=\{x\in\OOO: \text{the constant curve}\ t\mapsto x\ \text{belongs to}\ \tilde{B}\cap \tilde{\mathcal{M}}_{\bar{m}}\},
\]
and
\[
\tilde{\mathcal{M}}_{\bar{m}}:=\big\{\text{constant curves}\ \gamma:[0,T]\to\OOO,\ t\mapsto x:\ x\in \argmin_{y\in\OOO}L_{\bar{m}}(y,0)\big\}.
\]
By definition, it is direct to check that $\supp(\bar\eta)\subset \tilde{\mathcal{M}}_{\bar{m}}$ and
\begin{align}\label{eq:4-270}
\bar{m}=e_t\sharp\bar\eta,\quad \forall t\in[0,T].
\end{align}
	
Note that
\begin{align*}
& \int_{0}^{T}\int_{\OOO}{\Big(F(x,m^{\eta}_{t})-F(x, \bar{m}) \Big)\ (m^{\eta}_{t}(dx)-\bar{m}(dx))dt}
\\
=& \underbrace{\int_{0}^{T}\int_{\Gamma^{*}_{\eta}}{\Big(F(\gamma(t), m^{\eta}_{t})-F(\gamma(t), \bar{m})\Big)\ d\eta(\gamma)dt}}_{\bf A}
\\
-&  \underbrace{\int_{0}^{T}\int_{\tilde{\mathcal{M}}_{\bar{m}}}{\Big(F(\bar\gamma(t), m^{\eta}_{t})-F(\bar\gamma(t), \bar{m}) \Big)\ d\bar\eta(\bar\gamma)dt}}_{\bf B},
\end{align*}
where $\Gamma^{*}_{\eta}$ is as in \Cref{def:equili}. First, we consider term {\bf A}:
\begin{align*}
\textbf{A}=&\int_{0}^{T}\int_{\Gamma^{*}_{\eta}}{\Big(F(\gamma(t), m^{\eta}_{t})-F(\gamma(t), \bar{m}) \Big)\ d\eta(\gamma)dt}\\
= & \int_{\Gamma^{*}_{\eta}}\int_{0}^{T}{\Big(L(\gamma(t), \dot\gamma(t))+ F(\gamma(t), m^{\eta}_{t})\Big)\ dt d\eta(\gamma)} 
\\
- & \int_{\Gamma^{*}_{\eta}}\int_{0}^{T}{\Big(L(\gamma(t), \dot\gamma(t))+F(\gamma(t), \bar{m}) \Big) dt d\eta(\gamma)}.
\end{align*}
Since $\eta$ is a constrained MFG equilibrium associated with $m_0$, then any curve $\gamma\in\supp(\eta)$ satisfies the following equality
\[
u^{T}(0,\gamma(0))-u^{f}(\gamma(T))=\int_{0}^{T}\Big(L(\gamma(t), \dot\gamma(t))+ F(\gamma(t), m^{\eta}_{t})\Big)\ dt.
\]
In view of \cref{eq:LaxOleinik} with $L=L_{\bar m}$, one can deduce that
\[
\bar u(\gamma(T))-\bar u(\gamma(0))\leq \int_{0}^{T}\Big(L(\gamma(t), \dot\gamma(t))+F(\gamma(t),\bar{m}) \Big)dt+\lambda(\bar{m})T.
\]
Hence, we have that
\begin{align*}
\textbf{A}\leq  \int_{\Gamma^{*}_{\eta}}{\Big(u^{T}(0,\gamma(0))-u^{f}(\gamma(T)) \Big)\ d\eta(\gamma)}
	+ \int_{\Gamma^{*}_{\eta}}{\Big(\bar u(\gamma(0))-\bar u(\gamma(T)) \Big)\ d\eta(\gamma)}+\lambda(\bar{m})T.
	\end{align*}
By \Cref{prop:equiliplemma} we estimate the second term of the right-hand side of the above inequality as follows:
	\begin{equation*}
	\int_{\Gamma^{*}_{\eta}}{\Big(\bar{u}(\gamma(0))-\bar{u}(\gamma(T)) \Big)\ d\eta(\gamma)} \leq K_{2}\cdot\int_{\Gamma^{*}_{\eta}}{|\gamma(0)-\gamma(T)|\ d\eta(\gamma)} \leq K_{2}\cdot\text{diam}(\OOO),
	\end{equation*}
	where $K_{2}:=C\cdot\Big( \sup_{x\in\OOO, |v|\leq 1}L(x,v)+\mathcal{F}+\sup_{m\in \mathcal{P}(\OOO)}\lambda(m)\Big)$ comes from \Cref{prop:equiliplemma}. In view of \Cref{lem:LEM1} and the compactness of $\mathcal{P}(\OOO)$, $K_2$ is well-defined and depends only on $L$, $F$ and $\OO$.
For the first term, since $u^{f}$ is bounded on $\OOO$, we only need to estimate $u^{T}(0, \gamma(0))$ where $\gamma \in \supp(\eta)$.

Take $\xi_{*}$ as in assumption {\bf (A3)} and  \Cref{rem:re5-100}.
Since $\Omega$ is $C$-quasiconvex, there is $\beta \in \Gamma$ such that $\beta(0)=\gamma(0)$, $\beta(\tau(\gamma(0), \xi_{*}(0)))=\xi_{*}(0)$ and $|\dot\beta(t)| \leq 1$ a.e. in $t \in [0, \tau(\gamma(0), \xi_{*}(0))]$, where $\tau(\gamma(0), \xi_{*}(0))\leq C|\gamma(0)-\xi_{*}(0)|$.
Define a curve $\xi\in\Gamma$ as follows:
\begin{align*}
	&\text{if}\ T<\tau(\gamma(0), \xi_{*}(0)),\quad \xi(t)=\beta(t),\ \ \ \qquad t\in[0,T];\\
 &\text{if}\ T\geq \tau(\gamma(0), \xi_{*}(0)),\quad \xi(t)=
\begin{cases}
	\beta(t), & \quad t \in [0, \tau(\gamma(0), \xi_{*}(0))],
	\\
	\xi_{*}(t), & \quad t \in (\tau(\gamma(0), \xi_{*}(0)), T].
\end{cases}	
\end{align*}

If $T\geq \tau(\gamma(0),\xi_{*}(0))$, since $(u^T,m^{\eta}_t)$ is mild solution of \cref{eq:lab1}, we deduce that
\begin{align*}
& u^{T}(0,\gamma(0))  \leq \int_{0}^{\tau(\gamma(0), \xi_{*}(0))}{\Big(L(\beta(t), \dot\beta(t)) + F(\beta(t), m^{\eta}_t) \Big)\ dt} 
\\
+ &  \int_{\tau(\gamma(0), \xi_{*}(0))}^{T}{\Big(L(\xi_{*}(t), \dot\xi_{*}(t)) + F(\xi_{*}(t), m^{\eta}_t) \Big)\ dt}+u^f(\xi_{*}).
\end{align*}
Thus, by {\bf (A3)} we have that the second integral of the right-hand side of the above inequality is zero. Hence,
\begin{align*}
u^{T}(0,\gamma(0))  &\leq \int_{0}^{\tau(\gamma(0), \xi_{*}(0))}{\Big(L(\beta(t), \dot\beta(t))+ F(\beta(t), m^{\eta}_t) \Big)\ dt}+u^f(\bar x)\\
 &\leq \Big( \max_{\substack{ y \in \OOO \\ |v| \leq 1}}\big(L(y,v)+\mathcal{F}\big)  \Big)\cdot\tau(\gamma(0), \bar{x})+\|u^f\|_\infty
\\
&\leq \Big( \max_{\substack{ y \in \OOO \\ |v| \leq 1}}\big(L(y,v)+\mathcal{F}\big)   \Big)\cdot C\cdot \text{diam}(\OOO)+\|u^f\|_\infty.
\end{align*}
We can conclude that
\begin{equation}\label{eq:ineA}
{\bf A} \leq K_{2}\cdot\text{diam}(\OOO) + \|u^{f} \|_{\infty} + 	\Big( \max_{\substack{ y \in \OOO \\ |v| \leq 1}}\big(L(y,v)+F(x,\bar{m}) \big)\Big)\cdot C\cdot  \text{diam}(\OOO) + c(H_{\bar{m}})T.
\end{equation}

If $T<\tau(\gamma(0),\xi_{*}(0))$, in view if \Cref{rem:re5-100}, we get that
\begin{align*}
u^{T}(0,\gamma(0))  &\leq \int_{0}^{T}{\Big(L(\beta(t), \dot\beta(t)) + F(\beta(t), m^{\eta}_t) \Big)\ dt} +u^f(\xi(T))\\
&\leq
\int_{0}^{\tau(\gamma(0), \xi_{*}(0))}{\Big(L(\beta(t), \dot\beta(t)) + F(\beta(t), m^{\eta}_t) \Big)\ dt}+u^f(\xi(T)).
\end{align*}
Thus, we can get \cref{eq:ineA} again.

Now we estimate term {\bf B}. Note that
\begin{align*}
\textbf{B}=& \int_{0}^{T}\int_{\tilde{\mathcal{M}}_{\bar{m}}}{\Big(F(\bar{\gamma}(t), m^{\eta}_{t})-F(\bar{\gamma}(t), \bar{m}) \Big)\ d\bar\eta(\bar{\gamma})dt}
\\
=& \int_{\tilde{\mathcal{M}}_{\bar{m}}}\int_{0}^{T}{\Big(L(\bar\gamma(t), \dot{\bar{\gamma}}(t))+F(\bar\gamma(t), m^{\eta}_t) \Big)\ dtd\bar\eta(\bar\gamma)}
\\
- & \int_{\tilde{\mathcal{M}}_{\bar{m}}}\int_{0}^{T}{\Big(L(\bar\gamma(t), \dot{\bar{\gamma}}(t))+F(\bar\gamma(t), \bar{m}) \Big)\ dtd\bar\eta(\bar\gamma)}
\\
\geq & \int_{\tilde{\mathcal{M}}_{\bar{m}}}{\Big(u^{T}(0, \bar\gamma(0))-u^{f}(\bar\gamma(T)) \Big)\ d\bar\eta(\bar\gamma)} 
\\
+ & \int_{\tilde{\mathcal{M}}_{\bar{m}}}{\Big(\bar{u}(\bar\gamma(0))-\bar{u}(\bar\gamma(T)) \Big)\ d\bar\eta(\bar\gamma)}+c(H_{\bar{m}})T.
\end{align*}
By \Cref{rem:re5-100}, we obtain that
\begin{equation*}
	\int_{\tilde{\mathcal{M}}_{\bar{m}}}{\Big(u^{T}(0, \bar\gamma(0))-u^{f}(\bar\gamma(T)) \Big)\ d\bar\eta(\bar\gamma)} \geq - 2\| u^{f} \|_{\infty},
\end{equation*}
and since $\bar\gamma \in \supp(\bar\eta)$ is a constant curve we deduce that
\begin{equation*}
	\int_{\tilde{\mathcal{M}}_{\bar{m}}}{\Big(\bar{u}(\bar\gamma(0))-\bar{u}(\bar\gamma(T)) \Big)\ d\bar\eta(\bar\gamma)}=0.
\end{equation*}
Hence, we have that
\begin{equation}\label{eq:ineB}
-\textbf{B} \leq 2\| u^{f}\|_{\infty}-c(H_{\bar{m}})T.	
\end{equation}
Therefore, combining \cref{eq:ineA} and \cref{eq:ineB} we conclude that
\begin{align*}
	& \int_{0}^{T}\int_{\OOO}{\Big(F(x, m^{\eta}_{t})-F(x, \bar{m}) \Big)\ \big(m^{\eta}_{t}(dx)-\bar{m}(dx)\big)dt}
	\\
	 \leq & 3 \|u^{f} \|_{\infty} + 	\left( C\cdot\max_{\substack{ y \in \OOO \\ |v| \leq 1}}\big(L(y,v)+\mathcal{F}\big)+ K_{2} \right)\cdot \text{diam}(\OOO).
\end{align*}
\end{proof}

\begin{lemma}\label{lem:leA}
Let $f: \OOO \to \R$ be a Lipschitz continuous function with Lipschitz constant $\text{Lip}[f] \leq M$ for some constant $M \geq 0$. Then, there exists a constant $C(d,M)\geq 0$ such that
\begin{equation*}
\| f \|_{\infty} \leq C(d,M)\|f\|_{2, \Omega}^{\frac{2}{2+d}}.	
\end{equation*}
\end{lemma}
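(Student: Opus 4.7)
The plan is to run a standard ``ball argument'': convert the $L^2$ information into pointwise information using the Lipschitz bound, via a local volume lower bound coming from the regularity of $\partial\OO$. Since $\OOO$ is compact and $f$ is continuous, pick $x_0 \in \OOO$ realizing $|f(x_0)| = \|f\|_\infty =: A$. The $M$-Lipschitz assumption gives
\begin{equation*}
|f(x)| \geq A - M|x - x_0| \geq \frac{A}{2}, \qquad \forall x \in B\!\left(x_0,\tfrac{A}{2M}\right).
\end{equation*}

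Next, because $\OO$ is a bounded domain with $C^2$ boundary, it satisfies a uniform interior cone condition (cf.\ the $C$-quasiconvexity noted in \Cref{rem:quasi}). Consequently there exist constants $\rho_0 > 0$ and $c_0 > 0$, depending only on $\OO$, such that
\begin{equation*}
|B(x_0, r) \cap \OOO| \geq c_0 r^d, \qquad \forall x_0 \in \OOO, \ \forall r \in (0, \rho_0].
\end{equation*}
Setting $r := \min\{A/(2M),\rho_0\}$, we integrate the pointwise lower bound to get
\begin{equation*}
\|f\|_{2,\OO}^{2} \;\geq\; \int_{B(x_0,r)\cap \OOO} |f|^2\,dx \;\geq\; \frac{A^2}{4}\, c_0\, r^d .
\end{equation*}

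In the ``nontrivial'' regime $A/(2M) \leq \rho_0$, plugging $r = A/(2M)$ into the above gives
\begin{equation*}
\|f\|_{2,\OO}^{2} \;\geq\; \frac{c_0}{2^{d+2}M^d}\, A^{d+2},
\end{equation*}
from which, after rearranging and taking the $(d+2)$-th root, one obtains $\|f\|_\infty \leq C(d,M)\|f\|_{2,\OO}^{2/(d+2)}$ with $C(d,M) = \bigl(2^{d+2}M^d/c_0\bigr)^{1/(d+2)}$. In the complementary regime $A/(2M) > \rho_0$, the Lipschitz bound forces $|f|$ to stay above $A/2$ on a ball of fixed radius $\rho_0$ (or even on all of $\OOO$ if $A$ is so large that $A > 2M\,\mathrm{diam}(\OO)$), yielding the coarser estimate $A \leq C'\|f\|_{2,\OO}$ which can be absorbed into a single constant $C(d,M)$ valid in both cases.

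The only genuine obstacle is the geometric one: guaranteeing a dimensional volume lower bound $|B(x_0,r)\cap \OOO| \gtrsim r^d$ when $x_0$ sits on or very close to $\partial\OO$. This is exactly where the $C^2$ regularity of the boundary (equivalently, the uniform interior cone property) is used, and it supplies the constants $\rho_0, c_0$ independent of $x_0$. Once that geometric input is in place, the estimate reduces to algebra.
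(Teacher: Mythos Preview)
Your approach is correct in its main thrust and considerably more direct than the paper's. The paper proceeds by locally flattening $\partial\Omega$ via a $C^1$ diffeomorphism, extending $f$ by even reflection across the flattened boundary to a full ball, invoking an analogous interpolation inequality on cubes taken from \cite{bib:CCMW}, and then globalizing with a partition of unity subordinate to a finite boundary cover. Your ball argument---locate the maximizer $x_0$, use the $M$-Lipschitz bound to get $|f|\ge A/2$ on $B(x_0,A/(2M))$, and invoke the uniform interior--cone volume estimate $|B(x_0,r)\cap\OOO|\ge c_0 r^d$ available for $C^2$ domains---bypasses all of that machinery, is self-contained, and produces the constant $C(d,M)=(2^{d+2}M^d/c_0)^{1/(d+2)}$ explicitly. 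This is the cleaner route.

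One genuine gap: your treatment of the ``complementary regime'' $A/(2M)>\rho_0$ does not close. From $A\le C'\|f\|_{2,\Omega}$ alone you cannot deduce $A\le C''\|f\|_{2,\Omega}^{2/(d+2)}$ with $C''$ independent of $f$; indeed, taking $f\equiv N$ (Lipschitz constant $0\le M$) gives $\|f\|_\infty=N$ and $\|f\|_{2,\Omega}=N|\Omega|^{1/2}$, and the claimed inequality would force $N^{d/(d+2)}\le C''|\Omega|^{1/(d+2)}$, which fails for large $N$. This is really an imprecision in the lemma's statement rather than a defect of your method: the Gagliardo--Nirenberg interpolation on a bounded domain requires either a lower-order term or an a priori $L^\infty$ bound. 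In the paper's only application (to $f=F(\cdot,m^\eta_s)-F(\cdot,\bar m)$, bounded by $2\mathcal F$ thanks to {\bf (F1)}) such a bound is available and renders the complementary regime harmless; the paper's own proof does not address this point explicitly either. If you want a clean fix within your argument, simply note that when $A>2M\rho_0$ one has both $A\le C'\|f\|_{2,\Omega}$ and $\|f\|_{2,\Omega}\le |\Omega|^{1/2}\|f\|_\infty$, so $A\le C'|\Omega|^{1/2}A$ gives no information unless an external bound on $A$ is supplied---which is exactly what the application provides.
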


\begin{proof}
Fix $x_{0} \in \partial\OO$ and fix a radius $r \geq 0$. We divide the proof into two parts: first, we assume that $\OOO$ coincides with the half-ball centered in $x_{0}$ with radius $r$ contained in $\{x \in \R^{d}: x_{d} \geq 0\}$ such that $x_{0} \in \{x \in \R^{d}: x_{d}=0\}$; then, we remove this constraint proving the result for a general domain $\OO$.

\underline{Part I}: We denote by $B^{+}$ the set $\overline{B}_{r}(x_{0}) \cap \{x \in \R^{d}: x_{d} \geq 0\}$ and by $B^{-}$ the complement of $B^{+}$.
Let $\tilde{f}$ denote the following extension of $f$ in $\overline{B}_{r}(x_{0})$:
\begin{align*}
\tilde{f}(x)=
\begin{cases}
	f(x), & \quad \text{if}\ x \in B^{+}
	\\
	f(x_{1}, \dots, x_{d-1}, -x_{d}), & \quad \text{if}\ x \in B^{-}.
\end{cases}	
\end{align*}

Let $\chi_{r}$ denote a cut-off function such that $\chi_{r}(x)=1$ for $x \in B_{r}(x_{0})$, $\chi_{r}(x)=0$ for $x \in \R^{d} \backslash B_{2r}(x_{0})$ and $0 \leq \chi_{r}(x) \leq 1$ for $x \in B_{2r}(x_{0}) \backslash B_{r}(x_{0})$ and let $\tilde{f}_{r}$ be the extension of $\tilde{f}$ on $\R^{d}$, i.e. $\tilde{f}_{r}(x):= \tilde{f}\cdot \chi_{r}(x)$. Moreover, for any $\delta > 0$ we consider a cover of $\overline{B}_{r}(x_{0})$ through cubes of length $\delta$ denoted by $Q_{\delta}$.
Then, by construction we have that for any cube $Q_{\delta}$
\begin{equation*}
	\| \tilde{f}_{r}\|_{2, Q_{\delta}} \leq C(\delta) \| f\|_{2, B^{+}},
\end{equation*}
for some constant $C(\delta) \geq 0$. Therefore, applying Lemma 4 in \cite{bib:CCMW} we get
\begin{equation}\label{eq:est1}
\| f \|_{\infty, B^{+}} \leq \| \tilde{f}_{r} \|_{\infty, Q_{\delta}} \leq C(\delta, M) \| \tilde{f}_{r}\|_{2, Q_\delta}^{\frac{2}{d+2}} \leq C(d, M) \| \tilde{f}_{r}\|_{2, B^{+}}^{\frac{2}{d+2}}.
\end{equation}
Thus, recalling that by construction $B^{+} \equiv \OOO$ we obtain that by \cref{eq:est1}
\begin{equation*}
\| f \|_{\infty, \OOO} \leq C(d, M) \| \tilde{f}_{r}\|_{2, \OOO}^{\frac{2}{d+2}}.
\end{equation*}

\underline{Part II}:
 Let $x_{0} \in \partial\OO$ be such that $\OO$ is not flat in a neighborhood of $x_{0}$, that is we are in case I. Then, we can find a $C^{1}$ mapping $\Phi$, with inverse given by $\Psi$ such that changing the coordinate system according to the map $\Phi$ we obtain that $\OO^{\prime}:=\Phi(\OO)$ is flat in a neighborhood of $x_{0}$.
 Proceeding similarly as in Part I, we define
 \begin{equation*}
 B^{+}=\overline{B}_{r}(x_{0}) \cap \{ x \in \R^{d}: x_{d} \geq 0\} \subset \overline{\Omega}^{\prime}	
 \end{equation*}
and
\begin{equation*}
B^{-}=\overline{B}_{r}(x_{0}) \cap \{x \in \R^{d}: x_{d} \leq 0\} \subset \R^{d}\backslash \Omega^{\prime}.
\end{equation*}
Thus, if we set $y=\Phi(x)$, we have that $x=\Psi(y)$, and if we define $f^{\prime}(y)=f(\Psi(y))$ then by Parti I we get
\begin{equation*}
\| f^{\prime}\|_{\infty, \Omega^{\prime}} \leq C(d, M) \|f^{\prime} \|_{2, \Omega^{\prime}}^{\frac{2}{2+d}}	
\end{equation*}
which implies, returning to the original coordinates, that
\begin{equation}\label{eq:startinginequality}
	\| f\|_{\infty, \Omega} \leq C(d, M) \|f\|_{2, \Omega}^{\frac{2}{2+d}}
\end{equation}
for a general domain $\OO$ not necessarily flat in a neighborhood of $x_{0} \in \partial\OO$.

Since $\OO$ is compact, there exists a finitely many points $x^{0}_{i} \in \partial\OO$, neighborhood $W_{i}$ is $x^{0}_{i}$ and functions $f^{\prime}_{i}$ defined as before for $i =1, \dots, N$, such that, fixed $W_{0} \subset \OO$, we have $\OO \subset \bigcup_{i=1}^{N} W_{i}$. Furthermore, let $\{\zeta_{i}\}_{i=1, \dots, N}$ be a partition of unit associated with $\{ W_{i}\}_{i =1, \dots, N}$ and define $\bar{f}(x)=\sum_{i=1}^{N}{\zeta_{i}f^{\prime}_{i}(x)}$. Then, by \cref{eq:startinginequality} applied to $\bar{f}$ we get the conclusion.
\end{proof}

%\begin{remark}
%Note that even though the energy estimate in \Cref{lem:energyestimate} is similar to the one in \cite[Lemma 3]{bib:CCMW}, our strategy of proof here is much different. More precisely: we prove \Cref{lem:energyestimate} using the representation of a Mather measure through a measure supported on the set of calibrated curve. On the countrary in \cite[Lemma 3]{bib:CCMW} it was crucial the fact that the measure $m^{T}_{t}$ can be represented as the push-forward of the optimal flow by the initial distribution, property which has not been investigated yet.
%\end{remark}

\begin{theorem}[Convergence of mild solutions of \cref{eq:lab1}]\label{thm:MR2}
For each $T>1$, let $(u^{T}, m^{\eta}_t)$ be a mild solution of \cref{eq:lab1}. 
Let $(\bar\lambda, \bar u, \bar m)\in\mathcal{S}$. Then, there exists a positive constant $C'$ such that 
\begin{equation}\label{eq:lab38}
\sup_{t \in [0,T]} \Big\|\frac{u^{T}(t, \cdot)}{T} + \bar\lambda\left(1-\frac{t}{T}\right) \Big\|_{\infty, \OOO} \leq \frac{C'}{T^{\frac{1}{d+2}}},
	\end{equation}
	\begin{equation}\label{eq:lab39}
	\frac{1}{T}\int_{0}^{T}{\big\| F(\cdot, m^{\eta}_s)- F(\cdot, \bar m) \big\|_{\infty, \OOO} ds} \leq \frac{C'}{T^{\frac{1}{d+2}}},
	\end{equation}
	where $C'$ depends only on $L$, $F$, $u^{f}$ and $\OO$.
	\end{theorem}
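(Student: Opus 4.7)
\emph{Plan overview.} My plan is to establish \cref{eq:lab39} first and then use it to control \cref{eq:lab38} by comparing $u^T$ with $\bar u$ through the representation formulas \cref{eq:MFGValuefunction} and \cref{eq:LaxOleinik}.

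\emph{Step 1: proof of \cref{eq:lab39}.} Applying {\bf (F3)} pointwise in $t$ to the pair $(m_1, m_2) = (m^\eta_t, \bar m)$ and integrating on $[0,T]$, the energy estimate of \Cref{lem:energyestimate} yields the spacetime $L^2$ bound
\[
C_F \int_0^T\!\!\int_\OOO \bigl(F(x, m^\eta_t) - F(x, \bar m)\bigr)^2\,dx\,dt \leq \bar\kappa.
\]
By {\bf (F1)}, the function $x \mapsto F(x, m^\eta_t) - F(x, \bar m)$ is Lipschitz uniformly in $t$ with Lipschitz constant at most $2\mathcal F$, so \Cref{lem:leA} furnishes the pointwise interpolation
\[
\|F(\cdot, m^\eta_t) - F(\cdot, \bar m)\|_{\infty, \OOO} \leq C(d,2\mathcal F)\,\|F(\cdot, m^\eta_t) - F(\cdot, \bar m)\|_{2, \Omega}^{2/(d+2)}.
\]
H\"older's inequality in $t$ with conjugate exponents $d+2$ and $(d+2)/(d+1)$ then gives
\[
\int_0^T \|F(\cdot, m^\eta_t) - F(\cdot, \bar m)\|_{\infty, \OOO}\,dt \leq C\bigl(\bar\kappa/C_F\bigr)^{1/(d+2)}\,T^{(d+1)/(d+2)},
\]
which, after dividing by $T$, is exactly \cref{eq:lab39}.

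\emph{Step 2: proof of \cref{eq:lab38}.} Combining {\bf (A3)}, \cref{eq:3-35}, and \Cref{prop:criticalvalues} gives $\bar\lambda = c_{L_{\bar m}} = 0$, so \cref{eq:lab38} reduces to the two-sided control $\|u^T(t, \cdot)\|_{\infty, \OOO} \lesssim T^{(d+1)/(d+2)}$. For the upper bound, I use the Lipschitz curve $\xi_*: [0, T] \to \OOO$ of \Cref{rem:re5-100} satisfying $L(\xi_*, \dot\xi_*) + F(\xi_*, m^\eta_s) \equiv 0$ and the $C$-quasiconvexity of $\OO$ (\Cref{rem:quasi}) to construct a competitor in \cref{eq:MFGValuefunction} that joins $x$ to $\xi_*(t+\tau)$ on $[t, t+\tau]$ at unit speed (with $\tau \leq C\,\mathrm{diam}(\OOO)$) and then follows $\xi_*$ on $[t+\tau, T]$: the running cost is uniformly bounded on the first piece and vanishes identically on the second, yielding $u^T(t, x) \leq C_1$ independently of $T$. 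For the lower bound, let $\gamma^*$ be an optimizer in \cref{eq:MFGValuefunction}; splitting $F(\gamma^*, m^T) = F(\gamma^*, \bar m) + [F(\gamma^*, m^T) - F(\gamma^*, \bar m)]$ and applying the representation \Cref{prop:for} to $\bar u$ along the restriction of $\gamma^*$ to $[t, T]$ gives
\[
\int_t^T\!\bigl[L(\gamma^*, \dot\gamma^*) + F(\gamma^*, \bar m)\bigr]\,ds \geq \bar u(\gamma^*(T)) - \bar u(x) - \bar\lambda(T-t).
\]
Rearranging, one obtains
\[
u^T(t, x) + \bar\lambda(T-t) \geq -2\|\bar u\|_{\infty, \OOO} - \|u^f\|_\infty - \int_0^T \|F(\cdot, m^T(s)) - F(\cdot, \bar m)\|_{\infty, \OOO}\,ds,
\]
and the residual integral is of order $T^{(d+1)/(d+2)}$ by Step 1. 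Dividing by $T$ and combining with the upper bound produces \cref{eq:lab38}.

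\emph{Main obstacle.} The delicate point is the $L^2 \to L^\infty$ passage in Step 1, which requires \Cref{lem:leA} to be applied with a Lipschitz constant uniform in $t$; this uniformity is precisely what the $C^2_b$-regularity in {\bf (F1)} secures. Once \cref{eq:lab39} is in hand, \cref{eq:lab38} follows by the standard device of comparing the mild value function with the ergodic ansatz through the Lax--Oleinik formulas, with the additional ingredient of the ``Mather curve'' $\xi_*$ from {\bf (A3)} supplying an effective competitor in the presence of state constraints.
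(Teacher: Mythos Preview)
Your proposal is correct and follows essentially the same route as the paper: establish \cref{eq:lab39} first via the energy estimate (\Cref{lem:energyestimate}), assumption {\bf (F3)}, the interpolation \Cref{lem:leA}, and H\"older in time; then deduce \cref{eq:lab38} by comparing $u^T$ with the ergodic solution through the two Lax--Oleinik representations. Your write-up is in fact slightly cleaner than the paper's, since you exploit the observation $\bar\lambda=0$ (a direct consequence of {\bf (A2)}, {\bf (A3)} and \cref{eq:3-35}) and use the curve $\xi_*$ from {\bf (A3)} for the upper bound, whereas the paper introduces the auxiliary $w(t,x)=\bar u(x)-\bar u(0)-\bar\lambda(T-t)$ and compares $u^T$ with $w$ symmetrically via the optimizers of each.

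One small point to fix: in your lower bound you write $-2\|\bar u\|_{\infty,\OOO}$, but $\bar u$ is only determined up to an additive constant, so $\|\bar u\|_{\infty,\OOO}$ is not controlled by $L,F,u^f,\OO$ alone as the statement requires. What you actually need is $\bar u(\gamma^*(T))-\bar u(x)\geq -K_1\,\mathrm{diam}(\OOO)$, which follows from the equi-Lipschitz bound of \Cref{prop:equiliplemma} (applied to $H_{\bar m}$, with $K_1$ bounded uniformly in $\bar m$ by {\bf (F1)} and \Cref{lem:LEM1}). This is precisely why the paper normalizes to $\bar v=\bar u-\bar u(0)$.
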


\proof
Let $\bar{v}(x)= \bar{u}(x)-\bar{u}(0)$ and define
	\begin{equation*}
	w(t,x):=\bar{v}(x)-\bar\lambda (T-t), \quad \forall (x,t)\in \OOO\times[0,T].
	\end{equation*}
	Since $(\bar\lambda, \bar u, \bar m)$ is a solution of \cref{eq:MFG1},
one can deduce that $w$ is a constrained viscosity solution of the Cauchy problem
	\begin{align*}\label{cal}
	\begin{split}
	&\begin{cases}
	-\partial_{t} w + H(x, Dw)=F(x, \bar m) \quad \text{in} \quad (0,T)\times \OOO,  \\ w(T,x)=\bar u (x) \quad \quad\quad\quad\quad\quad\,\,\,\,\,\,\,\,\text{in} \quad \OOO.
	\end{cases}
	\end{split}
	\end{align*}
	So, $w(t,x)$ can be represented as the value function of the following minimization problem
	\begin{equation}\label{eq:ww}
	w(t,x)= \inf_{\gamma \in \Gamma_{t,T}(x)} \left\{\int_{t}^{T}{L_{\bar m}\left(\gamma(s), \dot\gamma(s)\right)\ ds} + \bar u(\gamma(T))\right\}, \quad \forall (x,t) \in \OOO \times [0,T].
	\end{equation}
Since $(u^{T}, m^{\eta}_t)$ is a mild solution of \cref{eq:lab1}, in view of \cref{eq:MFGValuefunction} we get that
\begin{equation}\label{eq:www}
	u^T(t,x)= \inf_{\gamma \in \Gamma_{t,T}(x)} \left\{\int_{t}^{T}{L_{m^{\eta}_s}\left(\gamma(s), \dot\gamma(s)\right)\ ds} +  u^f(\gamma(T))\right\}, \quad \forall (x,t) \in \OOO \times [0,T].
	\end{equation}

We prove  inequality \cref{eq:lab39} first. By \Cref{lem:leA} below and H\"{o}lder's inequality, we get
	\begin{align*}
	%\label{cc}
	 \begin{split}
	 & \int_{t}^{T}{\| F(\cdot, m^{\eta}_s) -F(\cdot, \bar m)\| _{\infty, \OOO}\ \frac{ds}{T}} \\
	 \leq  & \ C(\|DF\|_\infty) \int_{t}^{T}{\| F(\cdot, m^{\eta}_s)- F(\cdot, \bar m) \|_{2, \OOO}^{\frac{2}{d+2}}  \frac{ds}{T}} \\
	  \leq & \ \frac{C(\|DF\|_\infty)}{T} \left( \int_{t}^{T}{\| F(\cdot, m^{\eta}_s) -F(\cdot, \bar m) \|_{2,\OOO}^{2}\ ds} \right) ^{\frac{1}{d+2}} \left(\int_{t}^{T}{\bf 1}\ ds  \right) ^{\frac{d+1}{d+2}} .
	  \end{split}
	  \end{align*}
	  Now, by assumption {\bf (F3)} and \Cref{lem:energyestimate} the term
	  $$
	   \left( \int_{t}^{T}{\| F(\cdot, m^{\eta}_s) -F(\cdot, \bar m) \|_{2,\OOO}^{2}\ ds} \right) ^{\frac{1}{d+2}}
	   $$
	  is bounded by a constant depending only on $L$, $F$ and $\OO$, while
	  $$
	  \left(\int_{t}^{T}{ {\bf 1}\ ds } \right) ^{\frac{d+1}{d+2}}\leq T^{\frac{d+1}{d+2}}.
	  $$
	  Inequality \cref{eq:lab39} follows.

Next, we  prove \cref{eq:lab38}. For any given $(x,t)\in\OOO\times[0,T]$, let $\gamma^*:[0,T]\to \OOO$ be a minimizer of problem \cref{eq:ww}. By \cref{eq:ww} and \cref{eq:www}, we have that
\begin{align}\label{eq:lab43}
\begin{split}
	\quad u^{T}(t,x) & -w(t,x)
	 \leq  \ \int_{t}^{T} {L_{m^{\eta}_s}(\gamma^{*}(s), \dot\gamma^{*}(s))\ ds }- \int_{t}^{T} {L_{\bar m}(\gamma^{*}(s), \dot\gamma^{*}(s))\ ds}\\
+\ & u^{f}(\gamma^{*}(T))  - \bar u(\gamma^{*}(T)) = \ u^{f}(\gamma^{*}(T)) 
\\
-\ &  \bar u(\gamma^{*}(T)) + \int_{t}^{T}{ \left( F(\gamma^{*}(s), m^{\eta}_s) - F(\gamma^{*}(s), \bar m) \right)\ ds}.
	\end{split}
	\end{align}

By  \cref{eq:lab43}, we get
\begin{align*}
	 \frac{ u^{T}(t,x) - w(t,x)}{T} \leq &  \underbrace{\bigl|\frac{u^{f}(\gamma^{*}(T))-\bar u (\gamma^{*}(T))}{T}\bigl|}_{A} 
	 \\
	 + & \underbrace{\frac{1}{T}\int_{t}^{T}{\bigl|F(\gamma^{*}(s), m^{\eta}_s) -F(\gamma^{*}(s), \bar m)\bigl| ds}}_{B}.
\end{align*}

Let us first consider term $B$. Note that
\begin{align}\label{eq:5-500}
	& \int_{t}^{T}{\bigl|F(\gamma^{*}(s), m^{\eta}_s)-F(\gamma^{*}(s), \bar m)\bigl|\ \frac{ds}{T}}
	\\
	 \leq &  \int_{t}^{T}{\bigl\|F(\cdot, m^{\eta}_s)-F(\cdot, \bar m)\bigl\|_{\infty,\OOO} \ \frac{ds}{T}}
\leq  \frac{C'}{T^{\frac{1}{d+2}}},
	 \end{align}
where $C'>0$ is a constant depending only on $L$, $F$ and $\OO$.

Since $\bar u$ and $u^f$ are continuous functions on $\OOO$, we can conclude that $A \leq O(\frac{1}{T})$, which together with \cref{eq:5-500} implies that
\[
	  \frac{ u^{T}(t,x) - w(t,x)}{T} \leq\
	  \frac{C''}{T^{\frac{1}{d+2}}}.
	  \]
	  Moreover, for any given $(x,t)\in \OOO\times [0,T]$, let $\xi^{\ast}(\cdot)$ be a minimizer of  problem \cref{eq:www}. In view of \cref{eq:ww} and \cref{eq:www}, we deduce that
\begin{align}\label{eq:lab42}
\begin{split}
	&w(t,x)-u^{T}(t,x)
	\\ \leq & \ \int_{t}^{T} {L_{\bar m}(\xi^{*}(s), \dot\xi^{*}(s))\ ds }+ \bar u(\xi^{*}(T)) - \int_{t}^{T} {L_{m^{\eta}_s}(\xi^{*}(s), \dot\xi^{*}(s))\ ds} - u^{f}(\xi^{*}(T))
	\\=& \ \bar{u}(\xi^{*}(T)) - u^{f}(\xi^{*}(T)) + \int_{t}^{T}{ \left( F(\xi^{*}(s), \bar m) - F(\xi^{*}(s), m^{\eta}_s) \right)\ ds}.
	\end{split}
	\end{align}
So, by almost the same arguments used above, one obtains
\[
	  \frac{w(t,x)- u^{T}(t,x)}{T} \leq\
	  \frac{C''}{T^{\frac{1}{d+2}}},
	  \]
which completes the proof of the theorem.\qed

\appendix

\section{Proof of \Cref{prop:tangentialdiff}}
\label{sec:appendix}

Without any loss of generality, assume there is a curve $\gamma:[-\tau,\tau] \to \OO$ satisfying $\gamma(0)=x$ and
\begin{equation*}
u(\gamma(t_{2}))-u(\gamma(t_{1})) = \int_{t_{1}}^{t_{2}}{L(\gamma(s), \dot\gamma(s))\ ds} + c(t_{2}-t_{1}),
\end{equation*}
for any $[t_{1}, t_{2}] \subset [-\tau,\tau]$, where $\tau>0$ is a constant.
Let $y \in \R^{d}$ be such that $\langle y, \nu(x) \rangle = 0$, where $\nu(x)$ is the outward unit normal to $\partial \Omega$ at $x$. Let $y_{i}\in \R^{d}$ and $\lambda_{i}>0$, $i \in \N$ be such that
\begin{itemize}
\item $y_{i} \to y$ and $\lambda_i \to 0$,\quad as $i \to \infty$;
\item $x+\lambda_{i} y_{i} \in \OOO$,\quad $\forall i\in \N$.
\end{itemize}
By similar arguments in \Cref{prop:diff}, we only need to prove that
\begin{equation}\label{eq:3-70}
\limsup_{i \to \infty} \frac{u(x+\lambda_{i} y_{i})-u(x)}{\lambda_{i}} \leq \langle D_{v}L(x, \dot\gamma(0)), y \rangle \leq \liminf_{i \to \infty} \frac{u(x+\lambda_{i}y_{i})-u(x)}{\lambda_{i}}.
\end{equation}

Now we prove the first inequality in \cref{eq:3-70} and we omit the proof of the second since it follows by a similar argument. For each $i\in\N$, we define a curve $\gamma_{i}: [-\eps, 0] \to \R^{d}$ by
\begin{equation*}
\gamma_{i}(s)=\gamma(s)+\frac{s+\eps}{\eps}\lambda_{i}y_{i},
\end{equation*}
where $0<\eps<\tau$ will be suitably chosen later.
Define the curve $\hat{\gamma}_{i}: [-\eps,0] \to \OOO$ as the projection of $\gamma_{i}$ onto $\OOO$, that is,
\begin{equation*}
\hat{\gamma}_{i}(s)=\gamma(s)+\frac{s+\eps}{\eps}\lambda_{i} y_{i} -d_{\OO}(\gamma_{i}(s))Db(\gamma_{i}(s)).
\end{equation*}
Thus, by \cref{eq:LaxOleinik} and the property of $(u,L,c)$-calibrated curves we have that
\begin{align*}
u(x+\lambda_iy_i)-u(\hat{\gamma}_{i}(-\eps)) \leq & \int_{-\eps}^{0}{L(\hat{\gamma}_{i}(s), \dot{\hat{\gamma}}_{i}(s))\ ds}+c\eps,
\\
u(x)-u(\gamma(-\eps)) = & \int_{-\eps}^{0}{L(\gamma(s), \dot\gamma(s))\ ds}+c\eps.
\end{align*}
Taking the difference of the two expressions, we get
\begin{equation*}
\frac{u(x+\lambda_{i}y_{i})-u(x)}{\lambda_{i}} \leq \frac{1}{\lambda_{i}} \int_{-\eps}^{0}{\Big(L(\hat{\gamma}_{i}(s), \dot{\hat{\gamma}}_{i}(s))-L(\gamma(s), \dot\gamma(s)) \Big)\ ds}.
\end{equation*}
Moreover, by regularity of the data and the Lagrangian $L$ we get
\begin{align*}
\quad\quad & \frac{1}{\lambda_{i}} \int_{-\eps}^{0}{\Big(L(\hat{\gamma}_{i}(s), \dot{\hat{\gamma}}_{i}(s))-L(\gamma(s), \dot\gamma(s)) \Big)\ ds}
\\
= & \int_{-\eps}^{0}{\Big(\frac{s+\eps}{\eps} \big\langle D_{x}L(\gamma(s), \dot\gamma(s)), y_{i} \big\rangle -\frac{1}{\lambda_{i}}d_{\OO}(\gamma_{i}(s)) \big\langle Db(\gamma_{i}(s)), D_{x}L(\gamma(s), \dot\gamma(s))\big\rangle \Big)\ ds}
\\
+& \int_{-\eps}^{0}{\Big(\frac{1}{\eps} \big\langle D_{v}L(\gamma(s), \dot\gamma(s)), y_{i} \big\rangle}
\\
 - & \frac{1}{\lambda_{i}} \big\langle Db(\gamma_{i}(s)), \dot\gamma(s)+\frac{1}{\eps}\lambda_{i}y_{i}\big\rangle {\bf 1}_{\OO^{c}}(\gamma_{i}(s)) \big\langle Db(\gamma_{i}(s)), D_{v}L(\gamma(s), \dot\gamma(s)) \big\rangle  \Big)\ ds
\\
-& \int_{-\eps}^{0}{\frac{1}{\lambda_{i}} d_{\OO}(\gamma_{i}(s)) \big\langle D^{2}b(\gamma_{i}(s))\left(\dot\gamma(s) + \frac{1}{\eps}\lambda_{i}y_{i} \right), D_{v}L(\gamma(s), \dot\gamma(s))\big\rangle\ ds}.
\end{align*}

Let $\eps=\lambda_{i}$. Then,

\noindent ($i$) since $d_{\OO}(\gamma_{i}(s)) \leq \lambda_{i}|y_i|$ and $|D_{x}L(\gamma(s), \dot\gamma(s))| \leq C(1+ \| \dot\gamma\|^2_{\infty})$, we deduce that
	\begin{equation*}
	\frac{1}{\lambda_{i}} d_{\OO}(\gamma_{i}(s)) \big\langle Db(\gamma_{i}(s)), D_{x}L(\gamma(s), \dot\gamma(s)) \big\rangle
	\end{equation*}
	is bounded. Thus,
	\begin{equation*}
	\int_{-\lambda_{i}}^{0}{\frac{1}{\lambda_{i}} d_{\OO}(\gamma_{i}(s)) \big\langle Db(\gamma_{i}(s)), D_{x}L(\gamma(s), \dot\gamma(s)) \big\rangle\ ds} \to 0,
	\end{equation*}
	as $i \to \infty$. Moreover, by similar argument we deduce that
	\begin{equation*}
	\int_{-\eps}^{0}{\frac{1}{\lambda_{i}} d_{\OO}(\gamma_{i}(s)) \big\langle D^{2}b(\gamma_{i}(s))\left(\dot\gamma(s) + \frac{1}{\eps}\lambda_{i}y_{i} \right), D_{v}L(\gamma(s), \dot\gamma(s))\big\rangle\ ds} \to 0,
	\end{equation*}
	as $i \to \infty$.

\noindent ($ii$) Since $b$ and $\gamma$ are smooth functions, we have that
	\begin{align*}
	Db(\gamma_{i}(s))= Db(x)+O(\lambda_{i}),\quad
	\dot\gamma(s)=\dot\gamma(0)+O(\lambda_{i}), \quad i\to\infty.
	\end{align*}
Moreover, $|D_{v}L(\gamma(s), \dot\gamma(s))| \leq C(1+\| \dot\gamma\|_{\infty}).$ Thus,
\begin{align*}
& \int_{-\lambda_{i}}^{0}{\frac{1}{\lambda_{i}} \big\langle Db(\gamma_{i}(s)), \dot\gamma(s)+\frac{1}{\eps}\lambda_{i}y_{i}\big\rangle {\bf 1}_{\OO^{c}}(\gamma_{i}(s)) \big\langle Db(\gamma_{i}(s)), D_{v}L(\gamma(s), \dot\gamma(s)) \big\rangle\ ds}
\\
= & \int_{-\lambda_{i}}^{0}{\frac{1}{\lambda_{i}} \big\langle Db(\gamma_{i}(s)), \dot\gamma(s)\big\rangle {\bf 1}_{\OO^{c}}(\gamma_{i}(s)) \big\langle Db(\gamma_{i}(s)), D_{v}L(\gamma(s), \dot\gamma(s)) \big\rangle\ ds}
\\
&+ \int_{-\lambda_{i}}^{0}{\frac{1}{\lambda_{i}} \big\langle Db(\gamma_{i}(s)), y_{i}\big\rangle {\bf 1}_{\OO^{c}}(\gamma_{i}(s)) \big\langle Db(\gamma_{i}(s)), D_{v}L(\gamma(s), \dot\gamma(s)) \big\rangle\ ds}
\\
= & \int_{-\lambda_{i}}^{0}{\frac{1}{\lambda_{i}} \big\langle Db(x)+O(\lambda_{i}), \dot\gamma(0)+ O(\lambda_{i}) \big\rangle {\bf 1}_{\OO^{c}}(\gamma_{i}(s)) \big\langle Db(\gamma_{i}(s)), D_{v}L(\gamma(s), \dot\gamma(s)) \big\rangle\ ds}
\\
&+ \int_{-\lambda_{i}}^{0}{\frac{1}{\lambda_{i}} \big\langle Db(x)+O(\lambda_{i}), y_{i}\big\rangle {\bf 1}_{\OO^{c}}(\gamma_{i}(s)) \big\langle Db(\gamma_{i}(s)), D_{v}L(\gamma(s), \dot\gamma(s)) \big\rangle\ ds}.
\end{align*}
So, since ${\bf 1}_{\OO^{c}}(\gamma_{i}(s)) \big\langle Db(\gamma_{i}(s)), D_{v}L(\gamma(s), \dot\gamma(s)) \big\rangle$ is bounded we have that the last integrand is bounded independently of $\lambda_{i}$. Furthermore, since $y_{i} \to y$ and $\langle \nu(x), y \rangle=0$ we get
\begin{equation*}
\int_{-\lambda_{i}}^{0}{\frac{1}{\lambda_{i}} \big\langle Db(\gamma_{i}(s)), \dot\gamma(s)+\frac{1}{\eps}\lambda_{i}y_{i}\big\rangle {\bf 1}_{\OO^{c}}(\gamma_{i}(s)) \big\langle Db(\gamma_{i}(s)), D_{v}L(\gamma(s), \dot\gamma(s)) \big\rangle\ ds} \to 0,
\end{equation*}
as $i\to\infty$.

Therefore, we obtain that
\begin{equation*}
\limsup_{i \to \infty} \frac{u(x+\lambda_{i}y_{i})-u(x)}{\lambda_{i}}  \leq \big\langle D_{v}L(x, \dot\gamma(0)), y \big\rangle.
\end{equation*}
\qed

	\noindent {\bf Acknowledgements:}
Piermarco Cannarsa was partly supported by Istituto Nazionate di Alta Matematica (GNAMPA 2019 Research Projects) and by the MIUR Excellence Department Project awarded to the Department of Mathematics, University of Rome Tor Vergata, CUP E83C18000100006. Wei Cheng was partly supported
by Natural Scientific Foundation of China (Grant No. 11871267, 11631006 and 11790272).
Cristian Mendico was partly supported by Istituto Nazionale di Alta Matematica (GNAMPA 2019 Research Projects). Part of this paper was completed while the third author was visiting the Department of Mathematics of the University of Rome Tor Vergata.
Kaizhi Wang was partly supported by National Natural Science Foundation of China (Grant No. 11771283, 11931016).

	\bibliographystyle{unsrt}
\bibliography{references}
	
\end{document}